\newtheorem{proposition}{Proposition}[section]
\newtheorem{lemma}[proposition]{Lemma}
\newtheorem{corollary}[proposition]{Corollary}
\newtheorem{theorem}[proposition]{Theorem}
\newtheorem*{theorem*}{Theorem}
\theoremstyle{definition}
\numberwithin{equation}{section}
\newenvironment{proofbold}[1]{\paragraph{Proof of {#1}.}}{\hfill$\square$}
\newenvironment{mysage}{\sagesilent}{\endsagesilent}
\title{\begin{center}
\LARGE
\textbf{Explicit $L^2$ bounds for the Riemann $\zeta$ function}
\bigskip
\end{center}}
\author{Daniele Dona, Harald A. Helfgott, Sebastian Zuniga Alterman}
\date{\today}
\begin{document}

\maketitle


\begin{abstract}
Explicit bounds on the tails of the zeta function $\zeta$
are needed for applications, notably for integrals involving $\zeta$
on vertical lines or other paths going to infinity.
Here we bound weighted $L^2$ norms of
tails of $\zeta$.

Two approaches are followed, each giving the better result on a different range.
The first one is inspired by the proof of the standard mean value theorem
for Dirichlet polynomials. 
The second approach, superior for large $T$, is based on classical lines, starting with an approximation to $\zeta$ via Euler-Maclaurin.

Both bounds give main terms of the correct order for $0<\sigma\leq 1$ and are strong enough to be of practical use for the rigorous computation of improper integrals.

We also present bounds for the $L^{2}$ norm of $\zeta$ in $[1,T]$ for $0\leq\sigma\leq 1$.
\end{abstract}
	
\tiny
\begin{mysage}
### Rounding ####
def roundup(x,d):
    return float(ceil(x*10^d)/10^d)
def rounddown(x,d):
    return float(floor(x*10^d)/10^d)

### Thresholds ###
TH=200 #For the numerical first approach (approximately)
THTrue=192 #For the numerical first approach (actually)
TZ=4 #For the integral of |zeta|^2
T0_012=2221 #For |zeta/s|^2 and 0<sigma<1/2 (least threshold for some sigma)
T0_121=639 #For |zeta/s|^2 and 1/2<sigma<1 (least threshold for some sigma)
T0_1=TH #For |zeta/s|^2 and sigma=1
T0_12_exact=2.26085*10^39 #For |zeta/s|^2 and sigma=1/2 #
Exp_12=ceil(log(T0_12_exact,10)) #10^(Exp_12-1)<T0_12_exact<10^Exp_12
T0_12=10^Exp_12
\end{mysage}
\normalsize

\newcommand{\Addresses}{{
  {\bigskip\footnotesize
\ \\
  D.~Dona, \textsc{Mathematisches Institut, Georg-August-Universit\"at G\"ottingen, Bunsenstrasse 3-5, 37073 G\"ottingen, Germany.}\\
  \texttt{daniele.dona@mathematik.uni-goettingen.de}

\medskip
\ \\
  H.~A. Helfgott, \textsc{Mathematisches Institut, Georg-August-Universit\"at G\"ottingen, Bunsenstrasse 3-5, 37073 G\"ottingen, Germany.}\\
  \texttt{harald-andres.helfgott@mathematik.uni-goettingen.de}

\medskip
\ \\
  S.~Zuniga Alterman, \textsc{Institut de Math\'{e}matiques de Jussieu, Universit\'{e} Paris Diderot P7, B\^ati-ment Sophie Germain, 8 Place Aur\'elie Nemours, 75013 Paris, France.}\\
  \texttt{sebastian.zuniga-alterman@imj-prg.fr}

}}}

\section{Introduction}\label{Int}

\subsection{Motivation}
Say we want to compute a line integral from $\sigma-i \infty$ to
$\sigma + i\infty$ involving the zeta function. Such integrals arise
often in work in number theory as inverse Mellin transforms. For example,
during his work on \cite{Hel19},
the second author had to estimate the double sum
\[D_{\alpha_1,\alpha_2}(y)=
\sum_{d\leq y} \sum_{l\leq y/d} \frac{\log\left(\frac{y}{d l}\right)}{d^{\alpha_1} l^{\alpha_2}},\]
and others of the same kind. Now, it
is not hard to show that
\[D_{\alpha_1,\alpha_2}(y) = \frac{1}{2\pi i} \int_{\sigma-i\infty}^{\sigma+i\infty}
\frac{\zeta(s+\alpha_1) \zeta(s+\alpha_2)}{s^2} y^s ds\]
for $\sigma>1$. Let  $0<\alpha_1, \alpha_2< 1$.
Shifting the line of integration to the left, we obtain
main terms coming from the poles at $s=1-\alpha_2$ and $s=1-\alpha_2$, and,
as a remainder term, the integral
\[\frac{1}{2\pi i} \int_{R_\beta} \frac{\zeta(s+\alpha_1) \zeta(s+\alpha_2)}{s^2}
y^s ds,\]
where $R_\beta$ is some contour to the left of the poles going
from $\beta-i\infty$ to $\beta+i \infty$.

It is possible to do rigorous numerical integration on bounded contours
in the complex plane, using, for instance, the ARB package \cite{Joh18}.
It then remains to bound the integral
\[\int_{\beta+i T}^{\beta+i\infty}
\frac{|\zeta(s+\alpha_1)| |\zeta(s+\alpha_2)|}{|s|^2}  ds,\]
the integral from $\beta-i\infty$ to $\beta - i T$ having the same absolute value. By the Cauchy-Schwarz inequality, the problem reduces to that of giving explicit bounds for
the integral
\begin{equation}\label{eq:terencehill}\int_{\beta+i T}^{\beta+i\infty}
\frac{|\zeta(s+\alpha_1)|^2}{|s|^2}  ds.\end{equation}
Finding such bounds is the main subject of this paper. 

\subsection{Methods and results}

Convexity bounds on $\zeta$ have been known explicitly
for more than 100 years \cite{Ba18}.
Since they are of the form
$\zeta(\sigma + i t) =O\left(t^{\frac{1-\sigma}{2}} \log t\right)$
for $0\leq \sigma \leq 1$, they imply that \eqref{eq:terencehill}
converges for $0< \sigma \leq 1$. There are also explicit
subconvexity bounds (that is, bounds stronger than convexity) for $\sigma=\frac{1}{2}$ (\cite{Leh70}, \cite{CG04}, \cite{PT15}, \cite{Hia16}) and for $\frac{1}{2}\leq\sigma\leq 1$ \cite{For02}.

Here, we produce better results in the $L^2$ norm
than can be obtained from such $L^\infty$ bounds.
Non-explicit bounds on the $L^2$-norm of $\zeta(\sigma+i t)$
are well known (\cite[Vol. 2, 806--819, 905--906]{Lan09},
\cite{HL18}, \cite{HL22}, \cite{Li24}; see the introduction to
\cite{Ing26} for an exposition).

\tiny
\begin{mysage}
#For checks about the following constants, see at the end of the file.
C_1_main=round( 28.29 ,ndigits=2)
C_121_num_a_main=round( 18.98 ,ndigits=2)
C_121_num_b_main=round( 0.61 ,ndigits=2)
C_121_opt_main=round( 12.95 ,ndigits=2)
C_12_num_main=round( 7.72 ,ndigits=2)
C_12_opt_main=round( 9.19 ,ndigits=2)
C_012_num_a_main=round( 0.5 ,ndigits=2)
C_012_num_b_main=round( 0.95 ,ndigits=2)
C_012_num_c_main=round( 5.62 ,ndigits=2)
C_012_num_d_main=round( 2.55 ,ndigits=2)
C_012_opt_main=round( 20.68 ,ndigits=2)
\end{mysage}
\normalsize

Our main result collects in a simplified form the results in Theorems \ref{thzeta} and \ref{th:z/s-all}.
\begin{theorem}\label{thm:main}
Let $0<\sigma\leq 1$. Then, the integral $\int_{T}^{\infty}\left|\frac{\zeta(\sigma+it)}{\sigma+it}\right|^{2}dt$ is bounded as follows
\begin{enumerate}[(1)]
\item\label{bigthm1} if $\sigma=1$, by
\begin{align*}
& \frac{\pi^{2}}{6}\cdot\frac{1}{T}+\sage{C_1_main}\cdot\frac{\log T}{T^2} & & \text{ \ for \ } T\geq\sage{T0_1};
\end{align*}
\item\label{bigthm121} if $\frac{1}{2}<\sigma<1$, by
\begin{align*}
& \frac{3\pi\zeta(2\sigma)}{5}\cdot\frac{1}{T}+\left(\sage{C_121_num_a_main}-\frac{\sage{C_121_num_b_main}}{\sigma-\frac{1}{2}}\right)\cdot\frac{1}{T^{2\sigma}} & & \text{ \ for \ } T\geq\sage{TH}, \\
& \zeta(2\sigma)\cdot\frac{1}{T}+\frac{\sage{C_121_opt_main}}{\left(\sigma-\frac{1}{2}\right)(1-\sigma)}\cdot\frac{1}{T^{2\sigma}} & & \text{ \ for \ } T\geq\sage{TZ};
\end{align*}
\item\label{bigthm12} if $\sigma=\frac{1}{2}$, by
\begin{align*}
& \frac{3\pi}{5}\cdot\frac{\log T}{T}+\sage{C_12_num_main}\cdot\frac{1}{T} & & \text{ \ for \ } T\geq\sage{TH}, \\
& \frac{\log T}{T}+\sage{C_12_opt_main}\cdot\frac{\sqrt{\log T}}{T} & & \text{ \ for \ } T\geq 10^{\sage{Exp_12}};
\end{align*}
\item\label{bigthm012} if $0<\sigma<\frac{1}{2}$, by
\begin{align*}
& \left(\frac{\sage{C_012_num_a_main}}{\sigma}+\frac{\sage{C_012_num_b_main}}{\frac{1}{2}-\sigma}+\sage{C_012_num_c_main}\right)\cdot\frac{1}{T^{2\sigma}}-\sage{C_012_num_d_main}\zeta(2\sigma)\cdot\frac{1}{T}& & \text{for \ }T\geq\sage{TH}, \\ 
& \frac{\zeta(2-2\sigma)}{2\sigma(2\pi)^{1-2\sigma}}\cdot\frac{1}{T^{2\sigma}}+\frac{\sage{C_012_opt_main}}{\sigma^{2}\left(\frac{1}{2}-\sigma\right)}\cdot\frac{1}{T} & & \text{for \ }T\geq\sage{TZ}.
\end{align*}
\end{enumerate}
\end{theorem}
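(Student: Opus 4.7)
The plan is to prove the bounds via two complementary methods---a Montgomery-style Dirichlet-polynomial approach giving the ``num''-labeled bounds (valid from $T\geq 200$) and an Euler--Maclaurin approach giving the ``opt''-labeled bounds (valid from a possibly much higher threshold, which is remarkably large at $\sigma=\tfrac12$)---and then to combine them by dyadic decomposition. In each regime of $\sigma$, the stated theorem packages whichever of the two is sharper. The weight $|s|^{-2}$ itself is handled by writing $[T,\infty) = \bigsqcup_{k\geq 0}[2^k T,2^{k+1}T]$, bounding $|s|^{-2}\leq (2^kT)^{-2}$ on each block, applying a mean-value bound $\int_{2^kT}^{2^{k+1}T}|\zeta(\sigma+it)|^2\,dt\leq M(\sigma,2^kT)$, and summing the geometric series in $k$. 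This step is what produces the $1/T$, $\log(T)/T$ and $1/T^{2\sigma}$ decay rates as well as the denominator factors $\sigma,\sigma-\tfrac12,\tfrac12-\sigma,1-\sigma$.

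For the first method I would write $\zeta(\sigma+it) = D_N(\sigma+it) + R_N(\sigma+it)$ with $D_N(s) = \sum_{n\leq N} n^{-s}$ and $R_N$ controlled by Euler--Maclaurin, and then bound
\[
\int |D_N(\sigma+it)|^2\, \phi(t/T)\,dt \;=\; T\sum_{m,n\leq N}(mn)^{-\sigma}\,\widehat{\phi}\!\left(\tfrac{T}{2\pi}\log(m/n)\right),
\]
where $\phi$ is a carefully tailored smooth approximation of $\mathbf{1}_{[1,\infty)}$ whose Fourier transform $\widehat\phi$ decays rapidly. The diagonal $m=n$ contributes $T\widehat\phi(0)\sum_{n\leq N}n^{-2\sigma}$, which, after the dyadic passage, produces the announced main terms: $\zeta(2\sigma)/T$ for $\sigma>\tfrac12$ (since $\sum n^{-2\sigma}$ converges), $\log(T)/T$ at $\sigma=\tfrac12$ (since $\sum_{n\leq N}n^{-1}\sim\log T$), and, after applying the functional equation, the shape $\zeta(2-2\sigma)/(2\sigma(2\pi)^{1-2\sigma}T^{2\sigma})$ for $\sigma<\tfrac12$. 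The fractional leading constants such as $3\pi/5$ reflect $\widehat\phi(0)$---the smoothness ``cost'' paid in exchange for enough decay of $\widehat\phi$ to suppress the off-diagonal terms.

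For the second method I would insert the classical Euler--Maclaurin expansion
\[
\zeta(s)=\sum_{n\leq N}n^{-s} + \frac{N^{1-s}}{s-1}-\tfrac{1}{2}N^{-s}+R_N(s),\qquad \Re s>0,
\]
with $N$ chosen as a function of $t$ and $\sigma$, directly into $\int|\zeta|^2$, square, and integrate. This yields the true leading constant (no $\widehat\phi(0)$ discount) at the price of larger error-term constants and a higher $T_0$; the passage to the weighted tail is again via dyadic decomposition. The $T\geq 10^{37}$ threshold at $\sigma=\tfrac12$ is forced by the fact that the main term has coefficient exactly $1$ in front of $\log(T)/T$ and the next-order $\sqrt{\log T}/T$ error takes extraordinarily long to dominate all other sub-main contributions.

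The main technical obstacle will be the explicit constants. Uniform control across $\sigma\in(0,1]$, including through the critical points $\sigma=\tfrac12$ and $\sigma=1$ (poles of $\zeta(2\sigma)$) and near $\sigma=0$ (where the dyadic sums blow up), requires careful tailoring of the smoothing $\phi$, of the truncation $N$, and numerical optimization of the thresholds $T_0$. Consolidating the two methods into the clean four-case statement further forces one to check, on each range of $\sigma$ separately, that the sub-dominant errors truly fit inside the stated constants all the way down to the announced $T_0$.
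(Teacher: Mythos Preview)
Your two-method framing is right, and identifying Euler--Maclaurin plus mean-value bounds on $\int|\zeta|^2$ as the engine of the second method is on target. The genuine gap is the dyadic step you use to pass from $\int|\zeta|^2$ to $\int|\zeta/s|^2$: bounding $|s|^{-2}\le(2^kT)^{-2}$ on $[2^kT,2^{k+1}T]$ and summing loses a factor of~$2$ in the leading term (for $\sigma>\tfrac12$, $\sum_{k\ge0}(2^kT)^{-2}\cdot\zeta(2\sigma)\,2^kT=2\zeta(2\sigma)/T$), so neither the ``opt'' constant $1$ nor the ``num'' constant $3\pi/5\approx 1.88$ is reachable. The paper handles the weight by integration by parts instead: from two-sided bounds on $\int_{T}^{u}|\zeta(\sigma+it)|^2\,dt$ as a function of the upper endpoint $u$, one writes (Proposition~\ref{extension}) $\int_T^\infty f(t)|\zeta|^2\,dt\le\int_T^\infty\bigl(f(u)\,\partial_u F(T,u)-f'(u)\,r^+(T,u)\bigr)du$ with $f(t)=(\sigma^2+t^2)^{-1}$; since $\partial_u F=\zeta(2\sigma)$ and $\int_T^\infty u^{-2}\,du=T^{-1}$, the main constant survives intact.

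Your first method is also structurally different from the paper's. The paper does not smooth a Dirichlet polynomial against $\phi(t/T)$; it exploits that $\zeta(s)/s$ is itself the Mellin transform of $x\mapsto\lfloor 1/x\rfloor$, sets $h(x)=\lfloor 1/x\rfloor-\sum_{n\ge1}g(nx)$ for a compactly supported $g\approx\mathds{1}_{[0,1]}$, so that $\mathcal{M}h(s)=(s^{-1}-G(s))\zeta(s)$, and then Plancherel gives $\frac{1}{2\pi}\int_{-\infty}^{\infty}|(s^{-1}-G(s))\zeta(s)|^2\,dt=\int_0^\infty|h(x)|^2x^{2\sigma-1}\,dx$ directly. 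Since $\zeta(s)/s=(s^{-1}-G(s))\zeta(s)\big/(1-sG(s))$, the tail is bounded by dividing this $x$-integral by $\inf_{|t|\ge T}|1-sG(s)|^2$; the weight $1/s$ is thus built into the Mellin identity, not imposed via blocks. The constant $3/5$ is $2\beta r/D_{\min}$ for a specific degree-$13$ piecewise-polynomial $g$ (optimized numerically), and the extra $\pi$ comes from dropping the $\frac{1}{2\pi i}$ normalization---not from any $\widehat\phi(0)$.
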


In each pair of bounds above, the second one is stronger for large $T$ and fixed $\sigma$. 
The first bounds in cases \eqref{bigthm121}, \eqref{bigthm12}, \eqref{bigthm012} are obtained by a method explained in \S\ref{Exp}, based on the fact that the
Mellin transform is an isometry.  The second set of bounds and the single bound in case \eqref{bigthm1} use a different approach, explained in \S\ref{Opt}; it is based on the following explicit bounds
on the $L^{2}$ norm of the restriction of $\zeta(\sigma+it)$ to a segment. 

\tiny
\begin{mysage}
#For checks about the following constants, see at the end of the file.
Cz_1_main=round( 18.48 ,ndigits=2)
Cz_121_main=round( 5.22 ,ndigits=2)
Cz_12_a_main=round( 2.0 ,ndigits=2)
Cz_12_b_main=round( 23.06 ,ndigits=2)
Cz_012_main=round( 10.34 ,ndigits=2)
Cz_0_main=round( 9.37 ,ndigits=2)
\end{mysage}
\normalsize

\begin{theorem}\label{thm:mainz}
Let $0\leq\sigma\leq 1$ and $T\geq\sage{TZ}$. Then, the integral $\int_{1}^{T}|\zeta(\sigma+it)|^{2}dt$ is bounded from above by
\begin{align*}
& \frac{\pi^{2}}{6}\cdot T+\sage{Cz_1_main}\cdot\sqrt{T} & & \text{if $\sigma=1$,} \\
& \zeta(2\sigma)\cdot T+\frac{\sage{Cz_121_main}}{\left(\sigma-\frac{1}{2}\right)(1-\sigma)^{2}}\cdot\max\{T^{2-2\sigma}\log T,\sqrt{T}\} & & \text{if $\frac{1}{2}<\sigma<1$,} \\
& T\log T+\sage{Cz_12_a_main}\cdot T\sqrt{\log T}+\sage{Cz_12_b_main}\cdot T & & \text{if $\sigma=\frac{1}{2}$,} \\
& \frac{\zeta(2-2\sigma)}{(2\pi)^{1-2\sigma}(2-2\sigma)}\cdot T^{2-2\sigma}+\frac{\sage{Cz_012_main}}{\sigma^{2}\left(\frac{1}{2}-\sigma\right)}\cdot T & & \text{if $0<\sigma<\frac{1}{2}$,} \\
& \frac{\pi}{24}\cdot T^{2}+\sage{Cz_0_main}\cdot T\log T & & \text{if $\sigma=0$.}
\end{align*}
\end{theorem}

The error terms above are not optimal: bounds with the correct coefficient for the second-order term (and a non-explicit lower-order term) are known; for $\sigma=\frac{1}{2}$, see Ingham \cite{Ing26}, Titchmarsh \cite{Tit34}, Atkinson \cite{At49}, and Balasubramanian \cite{Ba78} (vd.\ Heath-Brown \cite{HB78} for an $L^{2}$ estimate of the lower-order term, and Good \cite{Go77} for a lower bound on its order). For $\frac{1}{2}<\sigma<\frac{3}{4}$, an estimate was given by Matsumoto \cite{Ma89}, later extended by Matsumoto and Meurman \cite{MM93} to $\frac{1}{2}<\sigma<1$. We will be more precise in Thms.~\ref{th:z->=1/2} and~\ref{asymptotic<1/2}.

It would seem feasible to improve on Theorem~\ref{thm:mainz} by starting from
Atkinson's formula for $\sigma=\frac{1}{2}$, or Matsumoto-Meurman's for
$\frac{1}{2}<\sigma<1$,
estimating all terms while foregoing cancellation.  One could then
deduce a bound for $0<\sigma<\frac{1}{2}$ by the functional equation, as in
Theorem~\ref{asymptotic<1/2} here.
For $\sigma=\frac{1}{2}$, yet another possibility would be to attempt to make
the work of Titchmarsh or Balasubramanian explicit.

An exposition of these alternative procedures -- in their current
non-explicit versions -- can be found in \cite[\S 1]{Ma00}. They are
based on the approximate functional equation,
or the Riemann-Siegel formula, which is closely related. Shortly after the appearance of the original version of the present paper, Simoni\v{c} provided an explicit bound in \cite[Cor.~5]{Sim20} for the case $\sigma=\frac{1}{2}$, improving on Theorems~\ref{thm:mainz} and~\ref{th:z->=1/2} for $T$ large enough.

For the sake of rigor, we have used interval arithmetic throughout,
implemented by ARB \cite{Joh18}, which we used via Sage.

\section{Classical foundations revisited}

\subsection{$O$ and $O^*$ notation}

When we write $f(x)=O(g(x))$ as $x\to a$ ($a=\pm\infty$ is allowed)
for a real or complex valued function $f$ and a real valued function $g$,
we mean that there is a constant $C$ such that $|f(x)|\leq Cg(x)$ in a
neighborhood of $a$. We write $f(x)=O^*(h(x))$ to mean that $|f(x)|\leq h(x)$ (either for all $x$ or in an explicitly stated neighborhood of $a$).

\subsection{Bernoulli polynomials}

We define the Bernoulli polynomials $B_k:\mathbb{R}\to\mathbb{R}$ inductively: $B_0(x)=1$ and for $k\geq 1$, $B_{k}(x)$ is determined by $B_k'(x)=kB_{k-1}(x)$ and $\int_0^1B_k(x)=0$. The $k$-th Bernoulli number $b_k$ is the constant term of $B_k(x)$. 
In particular, $B_1(x)=x-\frac{1}{2}$ and $B_2(x)=x^2-x+\frac{1}{6}$. 

\begin{lemma}[{\cite[Cor.~B.4, Exer.~B.5(e)]{MV07}}]\label{lem:BoundBernoulli}
  For $k\geq 1$, $\max_{x\in[0,1]}|B_{2k}(x)|=|b_{2k}|$ and $\max_{x\in[0,1]}|B_{2k+1}(x)|<\frac{2(2k+1)!}{(2\pi)^{2k+1}}$. In general, for every $k\geq 2$,
\begin{equation}\label{BernoulliBounds}
\max_{x\in[0,1]}|B_{k}(x)|\leq\frac{2\zeta(k)k!}{(2\pi)^k}.
\end{equation}
\end{lemma}

\subsection{Euler-Maclaurin summation formula}

Bernoulli polynomials appear naturally in the Euler-Maclaurin summation formula.

\begin{theorem}[\textbf{Euler-Maclaurin}]\label{EMSF}
Let $K$ be a positive integer. Let $X<Y$ be two real numbers such that the function $f:[X,Y]\rightarrow\mathbb{C}$ has continuous derivatives up to the $K$-th order on the interval $[X,Y]$. Then
\begin{equation}
\sum_{X<n\leq Y}f(n)=\int_X^Yf(x)dx+S(K)-\frac{(-1)^K}{K!}\int_X^Y B_K(\{x\})f^{(K)}(x)dx,
\end{equation}
where
\begin{equation}
S(K)=\sum_{k=1}^K\frac{(-1)^k}{k!}\left(B_k(\{Y\})f^{(k-1)}(Y)-B_k(\{X\})f^{(k-1)}(X)\right),
\end{equation}
and $B_k:[0,1]\rightarrow\mathbb{R}$ is the $k$-th Bernoulli polynomial.
\end{theorem}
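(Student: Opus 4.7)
The plan is to prove the formula by induction on $K$, with essentially all of the content residing in the base case $K=1$; the inductive step reduces to a single integration by parts. For the base case, I would partition $[X,Y]$ into the two tail pieces $[X,\lceil X\rceil]$ and $[\lfloor Y\rfloor, Y]$ together with the unit intervals $[n,n+1]$ for the integers $n$ lying strictly between them. On each such piece $B_1(\{x\}) = x-\lfloor x\rfloor - \tfrac12$ is smooth with derivative $1$, so integration by parts gives
\[
\int_a^b B_1(\{x\})\,f'(x)\,dx \;=\; B_1(\{x\})\,f(x)\Big|_a^b \;-\; \int_a^b f(x)\,dx,
\]
with one-sided limits taken at integer endpoints. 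Summing these identities over the pieces, the boundary contributions at each interior integer $n$ combine to $(B_1(1)-B_1(0))\,f(n)=f(n)$, which assembles the sum $\sum_{X<n\le Y} f(n)$, while the surviving contributions at $X$ and $Y$ themselves collect into $-S(1)$.

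For the inductive step, the key observation is that $B_{K+1}(\{x\})$ is continuous on $\mathbb{R}$ as soon as $K\ge 1$: from the defining relations $B_{K+1}' = (K+1)B_K$ and $\int_0^1 B_K\,dx = 0$ one has $B_{K+1}(1) - B_{K+1}(0) = (K+1)\int_0^1 B_K\,dx = 0$. Ordinary integration by parts then yields
\[
\int_X^Y B_K(\{x\})\,f^{(K)}(x)\,dx \;=\; \frac{1}{K+1}\bigl[B_{K+1}(\{x\})\,f^{(K)}(x)\bigr]_X^Y \;-\; \frac{1}{K+1}\int_X^Y B_{K+1}(\{x\})\,f^{(K+1)}(x)\,dx,
\]
and substituting into the formula at level $K$, using the arithmetic identity $-\frac{(-1)^K}{K!}\cdot\frac{1}{K+1} = \frac{(-1)^{K+1}}{(K+1)!}$, produces the formula at level $K+1$; the new boundary contribution fits in precisely as the $k=K+1$ summand of $S(K+1)$.

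The main (and only) obstacle is the endpoint bookkeeping in the base case: one has to verify that the telescoping across the two fractional tail intervals works correctly in all sub-cases (whether or not $X$ or $Y$ is itself an integer), with the surviving boundary terms involving $B_1(\{X\})$ and $B_1(\{Y\})$ rather than the values $\pm\tfrac12$ that appear at integer break points. Once this is handled, the induction proceeds automatically, and no further facts about Bernoulli polynomials beyond the two identities used above are required.
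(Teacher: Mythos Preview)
Your proposal is correct and follows the standard induction-on-$K$ argument; there is nothing to add beyond the endpoint case analysis you already flag. The paper itself does not prove this theorem at all: it simply refers the reader to \cite[Appendix~B]{MV07} for a proof and further discussion. Your argument is precisely the kind of proof one finds in such references, so there is no meaningful comparison to be made.
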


The reader may refer to \cite[Appendix B]{MV07} for a proof of Theorem \ref{EMSF}.

\begin{corollary}\label{Zeta}
Let $X\geq 1$ be an arbitrary real number. Let $K$ be a positive integer. For every $s=\sigma+it\in\mathbb{C}$ such that $\sigma>1-K$ and $s\neq 1$, we have
\begin{align*}
\zeta(s)= & \ \sum_{n\leq X}\frac{1}{n^s}+\frac{X^{1-s}}{s-1}+\left(\{X\}-\frac{1}{2}\right)\frac{1}{X^s} \\
 & \ +\sum_{k=2}^K\frac{a_k(s)B_k(\{X\})}{k! X^{s+k-1}}-\frac{a_{K+1}(s)}{K!}\int_{X}^\infty \frac{B_K(\{X\})}{x^{s+K}}dx,
\end{align*}
where $a_k(s)=s(s+1)...(s+k-2)$ for $k\geq 2$.
\end{corollary}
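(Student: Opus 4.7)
My plan is to apply the Euler--Maclaurin summation formula (Theorem~\ref{EMSF}) to $f(x) = x^{-s}$ on the interval $[X,Y]$ with $Y$ a positive integer, let $Y\to\infty$ to obtain the identity in the absolute convergence range $\sigma > 1$, and then extend it to the stated region $\sigma > 1-K$, $s\neq 1$, by analytic continuation.

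\textbf{Key computation.} The derivatives of $f(x)=x^{-s}$ are
$$f^{(j)}(x) = (-1)^{j} s(s+1)\cdots (s+j-1)\, x^{-s-j} = (-1)^{j} A_{j}(s)\, x^{-s-j},$$
with $A_0(s)=1$ and $A_{k-1}(s)=a_k(s)$ for $k\geq 2$. Substituting into Theorem~\ref{EMSF} on $[X,Y]$ one gets
$$\sum_{X<n\leq Y} n^{-s} = \frac{X^{1-s}-Y^{1-s}}{s-1} + S(K) - \frac{A_K(s)}{K!}\int_X^Y \frac{B_K(\{x\})}{x^{s+K}}\,dx,$$
where the sign factors $(-1)^{k}\cdot (-1)^{k-1}=-1$ collapse the $k$-th summand of $S(K)$ to $-\tfrac{A_{k-1}(s)}{k!}\bigl(B_k(\{Y\})Y^{-s-k+1}-B_k(\{X\})X^{-s-k+1}\bigr)$.

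\textbf{Taking the limit.} For $\sigma>1$ I let $Y\to\infty$ through positive integers, so that $\{Y\}=0$ and $B_k(\{Y\})=b_k$ is bounded. Each $Y$-dependent term then vanishes in the limit: $Y^{1-s}/(s-1)\to 0$ since $\sigma>1$, $b_k Y^{-s-k+1}\to 0$ since $\sigma+k-1>0$, and the remainder integral tends to an absolutely convergent integral over $[X,\infty)$ because $|B_K(\{x\})|$ is uniformly bounded. Combining $\sum_{n\leq X}n^{-s}+\lim_{Y\to\infty}\sum_{X<n\leq Y}n^{-s}=\zeta(s)$ with the formula above, and recalling $B_1(\{x\})=\{x\}-\tfrac{1}{2}$ at integer arguments, produces the claimed identity.

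\textbf{Analytic continuation and main obstacle.} The only remaining point is to pass from $\sigma>1$ to $\sigma>1-K$. The left-hand side is holomorphic on $\mathbb{C}\setminus\{1\}$, and on the right all of $\sum_{n\leq X}n^{-s}$, $X^{1-s}/(s-1)$, the explicit $1/(2X^s)$ term, and the finite sum $\sum_{k=2}^K a_k(s)B_k(\{X\})/(k!X^{s+k-1})$ are entire (or meromorphic with the single pole at $s=1$). For the integral, $|B_K(\{x\})|\leq C_K$ by \eqref{BernoulliBounds}, so
$$\left|\int_X^\infty \frac{B_K(\{x\})}{x^{s+K}}\,dx\right|\leq C_K\int_X^\infty x^{-\sigma-K}\,dx<\infty \quad\text{for } \sigma>1-K,$$
and standard dominated-convergence / Morera arguments (with a compact sub-rectangle of $\{\sigma>1-K\}$) show this integral is holomorphic in $s$ throughout. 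Since the two holomorphic functions agree on the half-plane $\sigma>1$, they agree on $\{\sigma>1-K\}\setminus\{1\}$. The only place one has to be careful is the bookkeeping of signs and indices in $S(K)$ and the verification that all boundary terms at $Y$ really vanish; this is routine, and I expect no genuine difficulty beyond it.
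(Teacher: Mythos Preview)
Your proposal is correct and follows essentially the same route as the paper: apply Theorem~\ref{EMSF} to $f(x)=x^{-s}$, let $Y\to\infty$ in the region $\sigma>1$, and then extend by analytic continuation using the boundedness of $B_K(\{x\})$ to ensure the remainder integral is holomorphic for $\sigma>1-K$. The only cosmetic difference is that you restrict $Y$ to integers, whereas the paper simply uses $\lim_{Y\to\infty}f^{(k-1)}(Y)=0$ for $\sigma>1$; either way the boundary terms at $Y$ vanish.
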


For $\sigma>1$,
Corollary \ref{Zeta} is a direct application of Theorem \ref{EMSF} upon defining $f:[X,Y]\to\mathbb{C}$, as $x\mapsto x^{-s}$, $\Re(s)>1$, and letting $Y\to\infty$. We extend the statement to $\sigma>1-K$ by analytic continuation.

We consider Theorem \ref{EMSF} into a broader class of functions than $C^K$.
The following formulation (from \cite[\S 3.1]{Hel19}) improves slightly
on a constant value: it replaces the factor $\frac{1}{12}$,
coming from a direct application of Theorem \ref{EMSF} with $K=2$, by a
factor of $\frac{1}{16}$.

\begin{lemma}[\textbf{Improved Euler-Maclaurin summation formula of second order}]\label{le:EulerMc}
Let $f:[0,\infty)\to\mathbb{C}$ be a continuous, piecewise $C^1$ function such that $f$, $f'$, $f''$ are in $L^{1}([0,\infty))$. Then 
\begin{equation}\label{eq:EulerMc}
  \sum_{n=1}^{\infty}f(n)=\int_{0}^{\infty}f(x)dx -\frac{f(0)}{2} -
  \lim_{t\to 0^+}
  \frac{f'(t)}{16}+
  O^{*}\left(\frac{1}{16}\|f''\|_{1}\right).
\end{equation}
\end{lemma}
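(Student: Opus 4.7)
The plan is to start from the first-order Euler--Maclaurin identity and then perform one additional integration by parts, but with a carefully chosen antiderivative of $2B_{1}(\{x\})$ in place of $B_{2}(\{x\})$ itself. The key observation is that any polynomial $P_{c}(x)=x^{2}-x+c$ satisfies $P_{c}'(x)=2B_{1}(x)$ and $P_{c}(0)=P_{c}(1)=c$, so $P_{c}(\{x\})$ is continuous on all of $[0,\infty)$ for every $c$. The range of $P_{c}$ on $[0,1]$ is $[c-\tfrac{1}{4},c]$, so the choice that minimizes $\max_{[0,1]}|P_{c}|$ is $c=\tfrac{1}{8}$, giving $\max_{[0,1]}|P_{c}|=\tfrac{1}{8}$. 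This improves on $\max_{[0,1]}|B_{2}|=\tfrac{1}{6}$ and accounts for the factor $\tfrac{1}{16}=\tfrac{1}{2}\cdot\tfrac{1}{8}$ in the conclusion, instead of the $\tfrac{1}{12}=\tfrac{1}{2}\cdot\tfrac{1}{6}$ that a direct use of $B_{2}$ would produce.

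Concretely, I would first apply Theorem~\ref{EMSF} with $K=1$ on $[0,N]$ to obtain
\[
\sum_{n=1}^{N}f(n)=\int_{0}^{N}f(x)\,dx+\frac{f(N)-f(0)}{2}+\int_{0}^{N}B_{1}(\{x\})f'(x)\,dx.
\]
Then, writing $B_{1}(\{x\})=\tfrac{1}{2}\tfrac{d}{dx}P(\{x\})$ with $P(x)=x^{2}-x+\tfrac{1}{8}$ and integrating by parts, the continuity of $P(\{x\})$ kills every boundary contribution at interior integers, leaving
\[
\int_{0}^{N}B_{1}(\{x\})f'(x)\,dx=\frac{1}{16}\Bigl(f'(N)-\lim_{t\to 0^{+}}f'(t)\Bigr)-\frac{1}{2}\int_{0}^{N}P(\{x\})f''(x)\,dx.
\]
Passing to the limit $N\to\infty$ and using $|P(\{x\})|\leq\tfrac{1}{8}$ to bound the remaining integral by $\tfrac{1}{16}\|f''\|_{1}$ then yields the stated formula, once the two boundary contributions $f(N)/2$ and $f'(N)/16$ have been shown to vanish.

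The main obstacle is the verification of the limits at infinity and a careful handling of the piecewise hypothesis. Since $f$ is absolutely continuous on $[0,\infty)$, $f(N)=f(0)+\int_{0}^{N}f'(x)\,dx$ converges as $N\to\infty$ by $f'\in L^{1}$, and the limit must be $0$ because $f\in L^{1}$; an analogous argument one derivative higher, using $f',f''\in L^{1}$, shows $f'(N)\to 0$. The use of $\lim_{t\to 0^{+}}f'(t)$ in the statement (rather than $f'(0)$) accommodates the possibility that $f'$ is discontinuous at $x=0$, which is allowed by the piecewise $C^{1}$ assumption. At interior breakpoints of $f'$ the continuity of $P(\{x\})$ ensures that the integration-by-parts jump terms telescope to zero, so no further adjustment is needed; this is the point that requires the most care, but it reduces to a bookkeeping check once one interprets $\|f''\|_{1}$ as the integral of $|f''|$ over the open set where $f''$ exists in the classical sense.
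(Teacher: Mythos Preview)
Your approach is essentially the paper's: replace $B_2(x)$ by $P(x)=x^2-x+\tfrac18$ in the second integration by parts, exploiting $\max_{[0,1]}|P|=\tfrac18<\tfrac16=\max_{[0,1]}|B_2|$ to turn the standard $\tfrac1{12}$ into $\tfrac1{16}$. The paper phrases it slightly differently (it writes $F=\tfrac12 P$ and performs two integrations by parts directly on each $[n-1,n]$ rather than invoking Theorem~\ref{EMSF}), but the substance is identical.

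There is one genuine slip in your final paragraph. At a breakpoint $c$ of $f'$, the integration-by-parts contributions do \emph{not} telescope to zero: splitting at $c$ leaves a net term $\tfrac12 P(\{c\})\bigl(f'(c^-)-f'(c^+)\bigr)$, which is nonzero whenever $f'$ actually jumps. Hence if you interpret $\|f''\|_1$ as ``the integral of $|f''|$ over the open set where $f''$ exists in the classical sense,'' the bound $O^*(\tfrac1{16}\|f''\|_1)$ is simply false---any $f$ with a corner gives a counterexample. The fix is the one the paper states immediately after the lemma: read $f''$ as a measure and $\|f''\|_1$ as the total variation of $f'$ on $[0,\infty)$. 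Then the integral $\int_0^N P(\{x\})\,df'(x)$ already contains the Dirac masses at the jumps of $f'$, each weighted by $|P(\{c\})|\le\tfrac18$, and your estimate goes through unchanged. (A small side remark: Theorem~\ref{EMSF} as stated assumes $f\in C^K$, so invoking it with $K=1$ for a merely piecewise $C^1$ function is not quite licit either; the paper sidesteps this by redoing the first integration by parts directly.)
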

Here and elsewhere (for instance in Proposition~\ref{prop:hippogryph}), we mean $f''$ and $\|f''\|_1$ in the sense of
distributions or measures,
so that $\|f''\|_1$ stands for the total variation of the function $f'$
on the interval $\lbrack 0,\infty)$. If $f$ is in $C^2$, this is the same as the usual meaning.

\begin{proof}
As $f$ has bounded total variation, $f(x)$ converges to a real number
  $R$ as $x\to \infty$.
  If $R$ were non-zero, then $f$ could not be in $L^1$;
thus $\lim_{x\to\infty}f(x)=0$. By the same reasoning, since $f'$ is differentiable and $f',f''$ are in $L^{1}$, we have $\lim_{x\to\infty}f'(x)=0$.

Suppose first that $f'$ is continuous at the positive integers. Let $F(x)$ be a differentiable function with $F'(x)=x-\frac{1}{2}$. Then
$\int_{0}^{1}F'(x)dx=0$, $F(0)=F(1)$, and so, by integration by parts, 
\[\begin{aligned}
& \ \int_{n-1}^{n}f(x)dx = \frac{f(n)}{2}-\frac{f(n-1)}{2}-\int_{n-1}^{n} f'(x) F'(\{x\}) dx \\
= & \ \frac{f(n)}{2}-\frac{f(n-1)}{2}-(f'(n)-f'(n-1))F(0) +\int_{n-1}^{n}f''(x)F(\{x\})dx,
 \end{aligned}\]
where we write $f'(0)$ for $\lim_{t\to 0^+} f'(t)$.
Therefore, $\int_0^{n}f(x)dx$ equals
\begin{equation*}
\sum_{k=1}^{n}f(k)-\frac{f(n)}{2}+\frac{f(0)}{2}-f'(n)F(0)+f'(0)F(0)+\int_{0}^{n}f''(x)F(\{x\})dx.
\end{equation*}
By using the fact that $\lim_{n\to\infty}f(n)=\lim_{n\to\infty}f'(n)=0$, we obtain finally that
\begin{equation}\label{eq:EulerMc*}
\sum_{n=1}^{\infty}f(n)=\int_{0}^{\infty}f(x)dx-\frac{f(0)}{2}-f'(0)F(0)+O^{*}\left(\int_0^{\infty}|f''(x)||F(\{x\})|dx\right).
\end{equation}
It remains to choose $F$ with $F'(x)=x-\frac{1}{2}$ such that $\max_{x\in[0,1]}|F(x)|$ is minimal. We take
$F(x)=\frac{1}{2}\left(x^2-x+\frac{1}{8}\right)$, in which case
$\max_{x\in[0,1]}|F(x)|=\frac{1}{16}$. We obtain \eqref{eq:EulerMc}.

Finally, suppose that $f'$ not continuous at all the positive integers. Since there are countably many points in which $f$ is not differentiable,
there are only countably many $x\in \mathbb{R}$ such that $f$ is not differentiable at $n+x$ for at least one $n\in\mathbb{Z}_{>0}$. Thus, there is a sequence $\{\varepsilon_{k}\}_{k=1}^{\infty}$ with $\lim_{k\rightarrow\infty}\varepsilon_{k}=0$ such that the functions $f_{k}:x\mapsto f(x+\varepsilon_{k})$ are differentiable at all positive integers. Then, by the above,
\eqref{eq:EulerMc} holds for all of these
functions. Since $f'\in L^{1}([0,\infty))$, dominated convergence gives us that
 $\sum_{n=1}^\infty f_k(n) \to \sum_{n=1}^\infty f(n)$ as $k\to\infty$. It is clear that $\lim_{k\to\infty} \lim_{t\to 0^+} f_k'(t) =
    \lim_{k\to\infty} \lim_{t\to 0^+} f'(t+\varepsilon_k) =
    \lim_{t\to 0^+} f'(t)$, because the last limit exists.
     Obviously, $\int_0^\infty f_k(x) dx \to \int_0^\infty f(x) dx$ as $k\to \infty$
      and $\|f_k''\|_1\leq \|f''\|_1$ for all $k$.
    We let $k\to\infty$ and
    obtain that $f$ satisfies  \eqref{eq:EulerMc}.
\end{proof}

\subsection{The Mellin transform}\label{sub:Mellin}
Let $f:[0,\infty)\rightarrow\mathbb{C}$. Its Mellin transform is defined as $\mathcal{M}f(s)=\int_{0}^{\infty}f(x)x^{s-1}dx$ for all $s$ such that the integral
  converges absolutely. It is a Fourier transform up to changing variables, so a version of Plancherel's identity holds:
\begin{equation}\label{eq:Plancherel}
\int_{0}^{\infty}|f(x)|^{2}x^{2\sigma-1}dx=\frac{1}{2\pi}\int_{-\infty}^{\infty}|\mathcal{M}f(\sigma+it)|^{2}dt,
\end{equation}
provided that
$f(x) x^{\sigma-\frac{1}{2}}$ is in $L^2([0,\infty))$ and
  $f(x)  x^{\sigma-1}$ is in $L^1([0,\infty))$.
  
    For $f$ continuous and piecewise $C^1$, by integration by parts,
\begin{equation}\label{eq:mellinder}
\mathcal{M}f'(s)=-(s-1)\mathcal{M}f(s-1).
\end{equation}

In particular, we have that $\mathcal{M}\mathds{1}_{(0,a]}(s)=\frac{a^{s}}{s}$, where $\mathds{1}_{S}$ denotes the indicator function of a set $S$. Considering now $f(x)=\sum_{n=1}^{\infty}a_{n}\mathds{1}_{(0,1/n]}(x)$, where $A(s)=\sum_{n=1}^{\infty}\frac{a_{n}}{n^{s}}$ is a Dirichlet series converging in the half-plane $\{s\in\mathbb{C}|\Re(s)>\sigma_c\}$, we observe that
\begin{equation*}
\mathcal{M}f(s)=\sum_{n=1}^{\infty}\int_{0}^{\infty}a_{n}\mathds{1}_{(0,1/n]}(x)x^{s-1}dx=\sum_{n=1}^{\infty}a_{n}\int_{0}^{1/n}x^{s-1}dx=\frac{A(s)}{s},
\end{equation*}
in the set $\{s\in\mathbb{C}|\Re(s)>\max\{0,\sigma_c\}\}$. As the above holds for every Dirichlet series, we have, for the function $J(x)=\sum_{n=1}^{\infty}\mathds{1}_{(0,1/n]}(x)=\left\lfloor\frac{1}{x}\right\rfloor$, the equality
\begin{equation}\label{eq:Melchi1}
\mathcal{M}J(s)=\frac{\zeta(s)}{s},
\end{equation}
which is valid for the set $\{s\in\mathbb{C}|\Re(s)>1\}$. Moreover, for a general function $f$, the function $\tilde{f}:x\mapsto f(nx)$ has Mellin transform $\mathcal{M}\tilde{f}(s)=\frac{\mathcal{M}f(s)}{n^{s}}$ for all $s$ in the domain of definition of $\mathcal{M}f$. Thus, for every well-defined function $F(x)=\sum_{n=1}^{\infty}f(nx)$, by considering
\begin{equation*}
h(x)=\left\lfloor\frac{1}{x}\right\rfloor-F(x)=\sum_{n=1}^{\infty}\mathds{1}_{(0,1/n]}(x)-\sum_{n=1}^{\infty}f(nx),
\end{equation*}
\begin{flalign}\label{eq:MelwithL} \text{we obtain}
&& \mathcal{M}F(s)&=\mathcal{M}f(s)\zeta(s),&\\
\label{identity} &&\mathcal{M}h(s)&=\left(\frac{1}{s}-\mathcal{M}f(s)\right)\zeta(s),&
\end{flalign}
for all $s$ in the domain of definition of $\mathcal{M}f$ such that $\Re(s)>1$.

The following can be readily proved by induction.
\begin{lemma}\label{le:mellinpoly}
For every $a\in\mathbb{R}$, $j\in\mathbb{N}\cup\{0\}$ and $s\in\mathbb{C}$ such that $\Re(s)>0$, we have
\begin{equation}\label{eq:mellinpoly}
\mathcal{M}\left((a-x)^{j}\mathds{1}_{(0,a]}(x)\right)(s)=\frac{j!a^{s+j}}{s(s+1)\ldots(s+j)}.
\end{equation}
\end{lemma}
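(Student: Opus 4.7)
The plan is to prove the identity by a direct calculation, recognizing the integral on the left as essentially a Beta function, or alternatively by a clean induction on $j$. I will describe both options and note that the induction route is probably preferable here because it stays entirely elementary and fits the self-contained style of the section.

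For the direct computation, I would substitute $x = au$ (legitimate since $a > 0$ is implicit from the requirement that $(0,a]$ is a genuine interval; if $a \leq 0$ the integrand vanishes and both sides are zero, which I would handle as a trivial preliminary remark). This transforms the integral into $a^{s+j}\int_{0}^{1}(1-u)^{j}u^{s-1}\,du = a^{s+j}B(s, j+1)$. Using $B(s, j+1) = \Gamma(s)\Gamma(j+1)/\Gamma(s+j+1)$ and telescoping $\Gamma(s+j+1) = (s+j)(s+j-1)\cdots s \cdot \Gamma(s)$ immediately yields the claimed formula $j!\,a^{s+j}/(s(s+1)\cdots(s+j))$.

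For the induction route, which I would actually write out since it avoids invoking the Beta-Gamma machinery, the base case $j=0$ reduces to $\int_{0}^{a}x^{s-1}\,dx = a^{s}/s$, valid because $\Re(s)>0$ makes the integral convergent at the origin. For the inductive step, fix $j \geq 1$ and apply integration by parts to
\[
\mathcal{M}\bigl((a-x)^{j}\mathds{1}_{(0,a]}\bigr)(s) = \int_{0}^{a}(a-x)^{j}x^{s-1}\,dx
\]
with $u = (a-x)^{j}$, $dv = x^{s-1}\,dx$, so $du = -j(a-x)^{j-1}\,dx$ and $v = x^{s}/s$. The boundary term vanishes at both endpoints: at $x=0$ thanks to $\Re(s) > 0$, and at $x=a$ thanks to $j \geq 1$. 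What remains is
\[
\frac{j}{s}\int_{0}^{a}(a-x)^{j-1}x^{s}\,dx = \frac{j}{s}\,\mathcal{M}\bigl((a-x)^{j-1}\mathds{1}_{(0,a]}\bigr)(s+1),
\]
and applying the inductive hypothesis at $s+1$ gives $\frac{j}{s}\cdot\frac{(j-1)!\,a^{s+j}}{(s+1)(s+2)\cdots(s+j)}$, which collapses to the desired expression.

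There is no real obstacle here; the only minor care-point is checking that the boundary contributions truly vanish under the hypothesis $\Re(s)>0$, and that the integration by parts is justified (the integrand is continuous on $(0,a]$ and absolutely integrable there for $\Re(s)>0$, so this is routine). I would keep the write-up to a short paragraph.
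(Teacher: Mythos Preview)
Your proposal is correct, and your preferred induction route is essentially the paper's approach: both argue by induction on $j$ with the same base case $\int_0^a x^{s-1}\,dx = a^s/s$. The only cosmetic difference is in the inductive step: you integrate by parts to reduce $(j,s)$ to $(j-1,s+1)$, whereas the paper uses the algebraic splitting $(a-x)^{j+1} = a(a-x)^j - x(a-x)^j$ to reduce $(j+1,s)$ to the pair $(j,s)$ and $(j,s+1)$; both are equally short.
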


\subsection{The Gamma function}

The Gamma function $\Gamma$ is defined for all $s\in\mathbb{C}$ such that $\Re(s)>0$ as $\Gamma:s\mapsto\int_0^\infty t^{s-1}e^{-t}dt$. This function can be extended meromorphically to $\mathbb{C}$, with poles on the set $\{0,-1,-2,-3,\ldots\}$ and vanishing nowhere. Where well-defined, it satisfies the relationship $\Gamma(s+1)=s\Gamma(s)$. This function is closely related to the $\zeta$ function, by means of the functional equation, valid for all $s\in\mathbb{C}\backslash\{0,1\}$,
\begin{equation}\label{functional}\zeta(s)=2(2\pi)^{s-1}\sin\left(\frac{\pi s}{2}\right)\Gamma(1-s)\zeta(1-s).
\end{equation} 

\begin{theorem}[\textbf{Stirling's formula, explicit form}]\label{Stirling}Let $0<\theta<\pi$. Let $s\in\mathbb{C}\backslash(-\infty,0]$
  such that $|\arg(s)|\leq\pi-\theta$, where $\arg(s)$ is the principal argument of $s$. Then
\begin{equation*}
\Gamma(s)=\sqrt{2\pi}s^{s-\frac{1}{2}}e^{-s}\exp\left(O^*\left(\frac{F}{|s|}\right)\right),
\end{equation*}
where $F=F_\theta=\frac{1}{12\sin^2\left(\frac{\theta}{2}\right)}$.
\end{theorem}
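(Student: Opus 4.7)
The plan is to derive an explicit integral representation for $\log\Gamma(s)$ via Euler-Maclaurin and then bound the error term uniformly over the sector $|\arg s|\leq\pi-\theta$.

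For the representation, I would apply Theorem~\ref{EMSF} with $K=2$ to $f(x)=\log(s+x)$ on $[0,N]$; note $f'(x)=1/(s+x)$ and $f''(x)=-1/(s+x)^2$. Using $\prod_{k=0}^{N}(s+k)=\Gamma(s+N+1)/\Gamma(s)$ together with the real-variable Stirling formula for $\log\Gamma(s+N+1)$ as $N\to\infty$ along integers, one obtains for $\Re(s)>0$
\[
\log\Gamma(s)=\left(s-\tfrac{1}{2}\right)\log s-s+\tfrac{1}{2}\log(2\pi)+R(s),\qquad R(s)=-\tfrac{1}{2}\int_0^\infty\frac{B_2(\{x\})}{(s+x)^2}\,dx.
\]
The constant $\tfrac{1}{2}\log(2\pi)$ is identified by specializing to real $s$ and invoking Legendre's duplication formula $\Gamma(s)\Gamma(s+\tfrac{1}{2})=2^{1-2s}\sqrt{\pi}\,\Gamma(2s)$ applied to both sides of the expansion (or equivalently via Wallis' product). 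Both sides are analytic on $\mathbb{C}\setminus(-\infty,0]$---the integral converges absolutely whenever $-s\notin[0,\infty)$, since then $|s+x|$ has a positive lower bound on $[0,\infty)$---so the identity extends from $\Re(s)>0$ to the full sector by analytic continuation.

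It remains to bound $|R(s)|$. Using $|B_2(\{x\})|\leq 1/6$ gives $|R(s)|\leq\tfrac{1}{12}\int_0^\infty|s+x|^{-2}\,dx$. Writing $s=|s|e^{i\phi}$ with $|\phi|\leq\pi-\theta$, completing the square in the denominator, and using $\arctan(\cot\phi)=\pi/2-\phi$ for $\phi\in(0,\pi)$, one evaluates the integral in closed form as $|\phi|/(|s|\sin|\phi|)$ (with the $\phi=0$ case handled by continuity). Since $\phi/\sin\phi$ is strictly increasing on $(0,\pi)$, the worst case is $|\phi|=\pi-\theta$, reducing the claim to the elementary trigonometric inequality $(\pi-\theta)\tan(\theta/2)\leq 2$ for $\theta\in(0,\pi)$. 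This last inequality is easy: it vanishes at $\theta=0$, tends to $2$ as $\theta\to\pi$, and is strictly increasing in between, which after differentiation and multiplication by $\cos^2(\theta/2)$ reduces to $\sin(2u)/2\leq\pi/2-u$ on $u=\theta/2\in(0,\pi/2)$ (obvious at both endpoints and by a single concavity check). Exponentiating yields the stated form of Stirling's formula with constant $F_\theta=1/(12\sin^2(\theta/2))$.

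The main obstacle is the first step: extracting exactly the constant $\tfrac{1}{2}\log(2\pi)$ when passing to the limit $N\to\infty$ requires a genuine $o(1)$ control on $\log\Gamma(s+N+1)$---not merely an estimate up to an additive constant---so the limiting procedure and the identification of that constant via duplication/Wallis has to be done together and carefully. Once the integral representation is in hand, the bound on $R(s)$ is a short and clean trigonometric calculation.
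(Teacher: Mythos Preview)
There is a genuine gap. Your formula for the remainder is incorrect: applying Theorem~\ref{EMSF} with $K=2$ produces a boundary term $\tfrac{1}{12}f'(0)=\tfrac{1}{12s}$, so the actual remainder is
\[
\mu(s)=\frac{1}{12s}-\frac{1}{2}\int_0^\infty\frac{B_2(\{x\})}{(s+x)^2}\,dx,
\]
exactly as the paper records (citing \cite[Thm.~1.4.2]{AAR99}); your $R(s)$ omits the $\tfrac{1}{12s}$ term. Equivalently, since $\int_0^\infty (s+x)^{-2}\,dx=\tfrac{1}{s}$, one has the clean form $\mu(s)=\tfrac{1}{2}\int_0^\infty \{x\}(1-\{x\})\,(s+x)^{-2}\,dx$.

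More seriously, once the formula is corrected, your crude estimate no longer delivers the stated constant $F_\theta=\tfrac{1}{12\sin^2(\theta/2)}$. With your own evaluation $\int_0^\infty|s+x|^{-2}\,dx=|\phi|/(|s|\sin|\phi|)$, the triangle inequality gives
\[
|\mu(s)|\le \frac{1}{12|s|}+\frac{1}{12}\cdot\frac{\pi-\theta}{|s|\sin\theta}
\quad\text{or, via the combined form,}\quad
|\mu(s)|\le \frac{1}{8}\cdot\frac{\pi-\theta}{|s|\sin\theta}.
\]
At $\theta=\pi/2$ these yield $\tfrac{2+\pi}{24|s|}\approx 0.214/|s|$ and $\tfrac{\pi}{16|s|}\approx 0.196/|s|$, both exceeding the claimed $F_{\pi/2}/|s|=\tfrac{1}{6|s|}\approx 0.167/|s|$. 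So the reduction to $(\pi-\theta)\tan(\theta/2)\le 2$ only works for your incorrect $R(s)$, not for $\mu(s)$. The paper avoids this by invoking the Gudermann series representation of $\mu$ (from \cite[\S2.4.4]{Re98}), which yields directly the sharper inequality $|\mu(s)|\le \tfrac{1}{12|s|\cos^2(\arg(s)/2)}$; substituting $|\arg(s)|\le\pi-\theta$ then gives $F_\theta$ on the nose. Your integral approach would prove an explicit Stirling bound with \emph{some} constant $C_\theta/|s|$, but not the specific $F_\theta$ in the statement.
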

\begin{proof} 
Since $\Gamma(s)$ has neither zeroes nor poles in
the simply connected domain $\mathbb{C}\backslash(-\infty,0]$,
  $\log\Gamma(s)$ is a well-defined
  analytic function on $\mathbb{C}\backslash(-\infty,0]$.
By \cite[Thm.~1.4.2]{AAR99} with $m=1$,
\begin{equation}\label{log}
\log\Gamma(s)=\frac{1}{2}\log(2\pi)+\left(s-\frac{1}{2}\right)\log s-s+\mu(s),
\end{equation}
where $\log$ is the principal branch of the logarithm defined on $\mathbb{C}\backslash(-\infty,0]$ and $\mu(s)=\frac{1}{12s}-\frac{1}{2}\int_0^\infty\frac{B_2(\{x\})}{(s+x)^2}dx$. Moreover, as explained in \cite[\S 2.4.4]{Re98}, $\mu$ can be expressed as a Gudermann series so that, for all $s\in\mathbb{C}\backslash(-\infty,0]$,
\begin{equation}\label{trig}|\mu(s)|\leq\frac{1}{12\cos^{2}\left(\frac{1}{2}\arg(s)\right)|s|}.
\end{equation}
Now, if $|\arg(s)|\leq\pi-\theta$ then $\cos\left(\frac{1}{2}\arg(s)\right)=\cos\left(\frac{1}{2}|\arg(s)|\right)\geq\cos\left(\frac{\pi-\theta}{2}\right)=\sin\left(\frac{\theta}{2}\right)$. Thus, upon exponentiating both sides of \eqref{log} and implementing the final bound for \eqref{trig}, we derive the result.
\end{proof}

\begin{corollary}[\textbf{Rapid decay of $\Gamma$ in non-negative vertical strips}]\label{decay} Let $T\geq 1$ and $\sigma\geq 0$. Then, for every complex number $s=\sigma+it$ such that $|t|\geq T$,
\begin{equation*}|\Gamma(\sigma+it)|=\sqrt{2\pi}|t|^{\sigma-\frac{1}{2}}e^{-\frac{\pi}{2}|t|}\exp\left(O^*\left(\frac{G_\sigma}{T}\right)\right),
\end{equation*}
where $G_\sigma=\frac{\sigma^3}{3}+\frac{\sigma^2}{2}\left|\sigma-\frac{1}{2}\right|+\frac{1}{6}$.
\end{corollary}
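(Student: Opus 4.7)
The plan is to apply Theorem~\ref{Stirling} with $\theta=\pi/2$. Since $\sigma\geq 0$ and $|t|\geq T\geq 1$, the hypothesis $|\arg(s)|\leq\pi/2=\pi-\theta$ is automatic, and the constant becomes $F_{\pi/2}=\frac{1}{12\sin^{2}(\pi/4)}=\frac{1}{6}$. The rest of the proof consists in taking absolute values of Stirling and reducing the factor $|s^{s-1/2}e^{-s}|$ to something explicit in $\sigma$ and $|t|$.

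To carry this out I would write $s^{s-1/2}=\exp((s-\tfrac12)\log s)$ with $\log s=\log|s|+i\arg(s)$, so that
\[
|s^{s-1/2}e^{-s}|=|s|^{\sigma-1/2}\,e^{-t\arg(s)-\sigma}.
\]
Two identities then do all the work. First, for $\sigma\geq 0$ one has $\arg(\sigma+it)=\mathrm{sgn}(t)(\pi/2-\arctan(\sigma/|t|))$, hence $t\arg(s)=\frac{\pi}{2}|t|-|t|\arctan(\sigma/|t|)$, which produces the target factor $e^{-\pi|t|/2}$. Second, $|s|=|t|\sqrt{1+\sigma^{2}/t^{2}}$ lets me trade $|s|^{\sigma-1/2}$ for $|t|^{\sigma-1/2}$ at the cost of an exponential correction. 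Combining, one obtains
\[
|\Gamma(\sigma+it)|=\sqrt{2\pi}\,|t|^{\sigma-1/2}\,e^{-\pi|t|/2}\,\exp(E),
\]
where
\[
E=\frac{\sigma-1/2}{2}\log\!\left(1+\frac{\sigma^{2}}{t^{2}}\right)+|t|\bigl(\arctan(\sigma/|t|)-\sigma/|t|\bigr)+O^{*}(F_{\pi/2}/|s|).
\]

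The only remaining task, and the one place where care is required, is to show $|E|\leq G_{\sigma}/T$ with the precise constant in the statement. Here I would use the elementary inequalities $0\leq\log(1+x)\leq x$ for $x\geq 0$ and $0\leq x-\arctan x\leq x^{3}/3$ for $x\geq 0$; the latter follows from $(x-\arctan x)'=x^{2}/(1+x^{2})\leq x^{2}$ together with vanishing at $0$. Applied respectively with $x=\sigma^{2}/t^{2}$ and $x=\sigma/|t|$, they furnish pointwise bounds $\tfrac{|\sigma-1/2|\sigma^{2}}{2t^{2}}$, $\tfrac{\sigma^{3}}{3t^{2}}$, and $\tfrac{1}{6|t|}$ for the three summands of $E$. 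Because $|t|\geq T\geq 1$, each of $1/t^{2}$ and $1/|t|$ is at most $1/T$, and the three bounds sum to exactly $G_{\sigma}/T$. The ``main obstacle'' is thus purely bookkeeping: one has to be careful enough with the constants in these three bounds to land precisely on $G_{\sigma}$ as defined, rather than on some larger substitute.
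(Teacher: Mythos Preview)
Your proposal is correct and follows essentially the same approach as the paper's own proof: apply Stirling with $\theta=\pi/2$ so that $F=\tfrac16$, extract $|t|^{\sigma-1/2}e^{-\pi|t|/2}$ from $|s^{s-1/2}e^{-s}|$, and bound the three residual terms using $0\le\log(1+x)\le x$, $0\le x-\arctan x\le x^3/3$, and $1/|s|\le 1/|t|$, together with $1/t^2\le 1/T$ since $|t|\ge T\ge 1$. The only cosmetic difference is that the paper derives the $\arctan$ estimate via the integral $\int_0^{\sigma/t}(1+O^*(x^2))\,dx$ rather than invoking $x-\arctan x\le x^3/3$ directly, but the content and the resulting constants are identical.
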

\begin{proof}

As $s$ is such that $|\arg(s)|\leq\frac{\pi}{2}$, we use Theorem \ref{Stirling} with $\theta=\frac{\pi}{2}$, and obtain
\begin{align*}
\Re(\log\Gamma(s))=&\ \frac{\log(2\pi)}{2}+
\left(\sigma-\frac{1}{2}\right) \frac{\log(\sigma^2+t^2)}{2} \\
 & \ -t\arg(\sigma+it)-\sigma+O^*\left(\frac{1}{6T}\right).
\end{align*}
As $\log(1+x)\leq x$ for all $x\geq 0$, we have that $\log(\sigma^2+t^2)=2\log|t|+O^*\left(\frac{\sigma^2}{T^2}\right)$. Furthermore, observe that $\arg(\sigma+it) =\arctan\left(\frac{t}{\sigma}\right)$, and that
\begin{align*}
\arctan\left(\frac{t}{\sigma}\right) & =\int_0^{\frac{t}{\sigma}}\frac{dx}{1+x^2} =\pm\frac{\pi}{2}-\int_0^{\frac{\sigma}{t}}(1+O^*(x^2))dx \\
 & =\pm\frac{\pi}{2}-\frac{\sigma}{t}+O^*\left(\frac{\sigma^3}{3t^3}\right),
\end{align*}
where the sign $\pm$ corresponds to the sign of $t$. Putting everything together, we obtain that $\Re(\log\Gamma(s))$ equals
\begin{equation*}
\frac{1}{2}\log(2\pi)+\left(\sigma-\frac{1}{2}\right)\log|t|-\frac{\pi |t|}{2}+O^*\left(\left(\frac{\sigma^3}{3}+\frac{\sigma^2}{2}\left|\sigma-\frac{1}{2}\right|\right)\frac{1}{T^2}+\frac{1}{6T}\right).
\end{equation*}
As $\frac{1}{T^2}\leq\frac{1}{T}$, the above error term can thus be compressed to $O^*\left(\frac{G_\sigma}{T}\right)$. By exponentiating the above equation, we obtain the result.
\end{proof}

\subsection{Bounds on some sums}\label{sub:rec}

\tiny
\begin{mysage}
# Constant in the lemma below.
g=RIF(euler_gamma)
lower_harm=2*((RIF(2)).log()+g-1)
\end{mysage}
\normalsize

\begin{lemma}\label{le:harmonic}
For any $X\geq 1$ we have
\begin{equation}\label{eq:harmonicbounds}
\log X+\gamma-\frac{c}{X}\leq\sum_{n\leq X}\frac{1}{n}\leq\log X+\gamma+\frac{1}{2X},
\end{equation}
where $c = 2 (\log 2 + \gamma-1)$ and
$\gamma=0.5772\dotsc$ is the Euler-Mascheroni constant.
\end{lemma}

The constant $c$ in the lower bound was pointed out in
\cite[Lemma~2.1]{RA17}.

\begin{proof}
By applying Theorem \ref{EMSF} with $K=2$ to the function $x\mapsto x^{-1}$, we obtain
\begin{equation}\label{eq:frommaclau}\sum_{n\leq X}\frac{1}{n}=\log X+\frac{7}{12}-\int_1^\infty\frac{B_2(\{x\})}{x^3}dx+R(X),
\end{equation}
where $R(X)=-\frac{B_1(\{X\})}{X}-\frac{B_2(\{X\})}{2X^2}+\int_X^\infty\frac{B_2(\{x\})}{x^3}dx$. By \eqref{BernoulliBounds}, $B_1$ and $B_2$ are bounded functions on $[0,1]$; hence, $R(X)=O\left(\frac{1}{X}\right)$ and the integral
in \eqref{eq:frommaclau} is convergent. We conclude that $\gamma$,
defined as
$\lim_{X\to\infty}\sum_{n\leq X}\frac{1}{n}-\log X$, equals
$\frac{7}{12}-\int_1^\infty\frac{B_2(\{x\})}{x^3} dx$. Therefore,
$\sum_{n\leq X} \frac{1}{n} = \log X +\gamma + R(X).$

Since $B_1(t) = t-\frac{1}{2}$, $B_2(t)=t^2-t+\frac{1}{6}$ and $\max|B_2(\{x\})|=\frac{1}{6}$, 
\begin{equation*}
  R(X)=\frac{1}{2X}
-\frac{\{X\}}{X}\left(1-\frac{1-\{X\}}{2X}\right)
  -\frac{1}{12X^2}+O^*\left(\frac{1}{12X^2}\right).
\end{equation*}
Since $1-(1-\{X\})/(2 X)\geq 0$, the upper bound in \eqref{eq:harmonicbounds}
follows immediately. We also obtain that $R(X)\geq -1/(2 X) -1/(6 X^2)$,
and so the lower bound in \eqref{eq:harmonicbounds} holds for
$X\geq 5$; we check it for $1\leq X \leq 5$ by hand.
\end{proof}

\begin{lemma}
\label{recip} 
Let $\alpha\in \mathbb{R}^+\setminus \{1\}$ and $X>0$. Then
\begin{equation*}
\zeta(\alpha)-\frac{1}{(\alpha-1)X^{\alpha-1}}-\frac{1}{2X^{\alpha}}\leq\sum_{n\leq X}\frac{1}{n^\alpha}\leq\zeta(\alpha)-\frac{1}{(\alpha-1)X^{\alpha-1}}+\frac{1}{X^{\alpha}}.
\end{equation*}
\end{lemma}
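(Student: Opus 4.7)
Plan: The proof is a direct application of Corollary~\ref{Zeta} with $K = 1$ at the real point $s = \alpha$. The hypothesis $\sigma > 1 - K = 0$ is satisfied since $\alpha > 0$, and $\alpha \neq 1$ is assumed. Rearranging the identity of that Corollary to isolate the partial sum yields
\begin{equation*}
\sum_{n \leq X} \frac{1}{n^\alpha} - \zeta(\alpha) + \frac{1}{(\alpha - 1)X^{\alpha - 1}} = -\frac{1}{2X^\alpha} + \alpha \int_X^\infty \frac{B_1(\{x\})}{x^{\alpha + 1}}\, dx.
\end{equation*}

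The central step is controlling the remainder integral. Using the elementary bound $|B_1(y)| = |y - 1/2| \leq 1/2$ on $[0, 1]$,
\begin{equation*}
\left| \alpha \int_X^\infty \frac{B_1(\{x\})}{x^{\alpha + 1}}\, dx \right| \leq \frac{\alpha}{2} \int_X^\infty x^{-\alpha - 1}\, dx = \frac{1}{2X^\alpha}.
\end{equation*}
Combining this bound with the $-\frac{1}{2X^\alpha}$ term that appears explicitly in the Corollary produces the claimed $O^*(\frac{1}{2X^\alpha})$ estimate for the deviation of $\sum_{n \leq X} n^{-\alpha}$ from $\zeta(\alpha) - \frac{1}{(\alpha-1)X^{\alpha-1}}$.

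The main obstacle I anticipate is securing the precise constant $\frac{1}{2}$ (rather than $1$) in the $O^*$ error. A naive triangle-inequality combination of the explicit $-\frac{1}{2X^\alpha}$ with the $\frac{1}{2X^\alpha}$ bound on the integral yields only $O^*(\frac{1}{X^\alpha})$. Sharpening to the stated constant requires a refined analysis of the remainder, either by integrating by parts once more (replacing $B_1$ with $\frac{1}{2}B_2$, whose supremum $\frac{1}{6}$ is smaller), or by exploiting the periodic mean-zero structure of $B_1(\{x\})$ to extract cancellation against the explicit main term. Either route keeps us within the techniques already developed for Lemma~\ref{le:harmonic}, so no genuinely new estimate should be required.
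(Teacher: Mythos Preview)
Your route via Corollary~\ref{Zeta} differs from the paper's proof, which does not invoke Euler--Maclaurin at all. The paper writes the discrepancy directly as
\[
\sum_{n=1}^\infty\left(\int_{n-1}^n\frac{dx}{(X+x)^\alpha} - \frac{1}{(\lfloor X\rfloor+n)^\alpha}\right),
\]
bounds each integral by its trapezoidal value $\tfrac{1}{2}\bigl((X+n-1)^{-\alpha}+(X+n)^{-\alpha}\bigr)$ using the convexity of $t\mapsto t^{-\alpha}$, replaces the point value by the smaller $(X+n)^{-\alpha}$, and telescopes. The constant $\tfrac{1}{2}$ appears in one stroke, with no further integration by parts and no appeal to the techniques of Lemma~\ref{le:harmonic}.

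Your self-diagnosed obstacle is genuine, and neither of your proposed repairs closes it. Passing to $K=2$ does not remove the $\pm\tfrac{1}{2X^\alpha}$ boundary contribution: at order $K=2$ one still has $-\tfrac{1}{2X^{\alpha}}$ plus lower-order $B_2$ corrections, so after the triangle inequality you are still above $\tfrac{1}{2X^\alpha}$. The cancellation you hope for in option (b) is real, but its source is precisely the convexity the paper exploits rather than the ``mean-zero structure'' of $B_1$: integrating $\alpha\int_k^{k+1}B_1(\{x\})x^{-\alpha-1}\,dx$ by parts over a full period yields $\int_k^{k+1}x^{-\alpha}\,dx-\tfrac{1}{2}\bigl(k^{-\alpha}+(k+1)^{-\alpha}\bigr)$, which is $\le 0$ as the trapezoid-rule error for a convex integrand. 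That sign information, not a $B_2$ bound, is what secures the constant $\tfrac{1}{2}$; recognizing it would essentially reproduce the paper's argument from inside your framework. (Note also that Corollary~\ref{Zeta} is stated only for $X\ge 1$, so the range $0<X<1$ in the lemma is not covered by your starting point.)
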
 

\begin{proof}
By definition of $\zeta(s)$ for $\Re(s)>1$, and by analytic continuation
for $\Re(s)>0$,
\begin{equation}\label{EulerMac}
\zeta(s) - \frac{1}{(s-1) X^{s-1}} - \sum_{n\leq X} \frac{1}{n^s} =
\sum_{n=1}^\infty 
\left(\int_{n-1}^{n} \frac{dx}{(X+x)^s} - \frac{1}{(\lfloor X\rfloor+n)^s}
\right)\end{equation}
for $s\ne 1$. Set $s=\alpha$.
 Since $t\mapsto t^{-\alpha}$ is decreasing, the right side of \eqref{EulerMac} is at least
\[\sum_{n=1}^\infty \left(\frac{1}{(X+n)^\alpha} -\frac{1}{(X+n-1)^\alpha}\right)
= - \frac{1}{X^\alpha}.\]
By 
the convexity of
$t\mapsto t^{-\alpha}$ and $(\lfloor X\rfloor+n)^{-\alpha} \geq (X+n)^{-\alpha}$,
\[\int_{n-1}^{n}\!\! \frac{dx}{(X+x)^\alpha}
- \frac{1}{(\lfloor X\rfloor+n)^s}
\leq \frac{1}{2} 
\left(\frac{1}{(X+n-1)^\alpha} +  \frac{1}{(X+n)^\alpha}\right) -
 \frac{1}{(X+n)^\alpha}
.\]
Telescoping again, we see that the right side of \eqref{EulerMac} is at most $1/(2 X^\alpha)$.
\end{proof}

\tiny
\begin{mysage}
def Dfunction(C):
    if C>10000:
        C=10000
    D=RIF( 1/2 + sqrt(1+1/C^2)*( 1/12 + sqrt(1+4/C^2)*( 1/(72*sqrt(3)) + sqrt(1+4/C^2)*1/360 ) ) )
    return D
Dcake = Dfunction(1)
\end{mysage}
\normalsize

The non-explicit form of the lemma below is classical: see for instance \cite[Thm.~4.11]{Tit86b}.

\begin{lemma}\label{le:353}
Let $s=\sigma+it$. Suppose that $X\geq 1$, $s\neq 1$, $0<\sigma\leq 1$ and $|t|\leq X$. Then
\begin{equation*}
  \zeta(s)=\sum_{n\leq X}\frac{1}{n^{s}}+\frac{X^{1-s}}{s-1}+O^{*}\left(
  \frac{D}{X^{\sigma}}\right),
\end{equation*}
where $D=\sage{roundup(Dcake.upper(),5)}$. If we assume $X\geq C$ for some $C>1$,
we may use
\begin{equation*}
D=\frac{1}{2}+\left(\frac{1}{12}+\left(\frac{1}{72\sqrt{3}}+\frac{1}{360}\sqrt{1+\frac{9}{C^2}}\right)\sqrt{1+\frac{4}{C^2}}\right) \sqrt{1+\frac{1}{C^2}}.
\end{equation*}
\end{lemma}

\begin{proof} Write $m_k$ for $\max_{x\in [0,1]} |B_k(x)|$.
  By Corollary \ref{Zeta} with $K=4$,
  \[\begin{aligned}
 & \ \zeta(s) - \sum_{n\leq X} \frac{1}{n^s} + \frac{X^{1-s}}{s-1} \\
= & \ O^*\bigg(
\frac{1}{2X^\sigma} + \sum_{2\leq k\leq 4}\frac{|a_{k}(s)|m_{k}}{k!X^{\sigma+k-1}} + \frac{|a_{5}(s)|m_{4}}{24}
\left|\int_X^\infty \frac{dx}{x^{s+4}}\right|\bigg)
  \\
= & \ \frac{1}{X^\sigma}
O^*\left(\frac{1}{2} + \frac{|s| m_2}{2 X} + \frac{|s||s+1|m_{3}}{6X^{2}} + 2\cdot \frac{|s||s+1||s+2|m_{4}}{24X^{3}}
\right).\end{aligned}\]
We know that $m_2=\frac{1}{6}$, and, since $B_3(x) = x^3-\frac{3}{2} x^2 + \frac{x}{2}$ and $B_{4}(x)=x^{4}-2x^{3}+x^{2}-\frac{1}{30}$, also that $m_3 = \frac{1}{12 \sqrt{3}}$ and $m_{4}=\frac{1}{30}$.
By $X\geq |t|$ and $\sigma\in [0,1]$, we have
$|s+k|/X\leq \sqrt{1+(k+1)^{2}/X^2}$ for all $k\geq 0$. Substituting these values inside the error term above, we get the result.
\end{proof}

\subsection{Further results}

The following is an explicit {\em mean value estimate}.

\tiny
\begin{mysage}
E=RIF(2*pi*sqrt(1+2/3*sqrt(6/5)))
\end{mysage}
\normalsize

\begin{proposition}\label{pr:brudern}
For any $X,T>0$ and any sequence of complex numbers $\{a_{n}\}_{n=1}^{\infty}$,
\begin{equation*}
\int_0^{T}\bigg|\sum_{n\leq X}a_{n}n^{it}\bigg|^{2}dt=\left(T+\frac{E}{2}\right)\sum_{n\leq X}|a_{n}|^{2}+O^{*}\bigg(E\sum_{n\leq X}n|a_{n}|^{2}\bigg),
\end{equation*}
where $E$ can be chosen to be equal to $2\pi\sqrt{1+2/3\sqrt{6/5}}\leq\sage{roundup(E.upper(),5)}$.
\end{proposition}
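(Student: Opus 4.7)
Expand the square and separate the diagonal:
\begin{equation*}
\int_0^T \Bigl|\sum_{n\leq X} a_n n^{it}\Bigr|^2 dt = T\sum_{n\leq X} |a_n|^2 + \sum_{\substack{m,n\leq X\\ m\neq n}} a_m \bar a_n \cdot \frac{e^{iT\log(m/n)} - 1}{i\log(m/n)}.
\end{equation*}
All the work lies in controlling the off-diagonal double sum. Following the strategy announced in the introduction (Montgomery \cite{Mo71} and the Hilbert-type inequality of Montgomery--Vaughan \cite{MV74}), I would embed the indicator $\mathds{1}_{[0,T]}$ in a smooth, nonnegative majorant $g$ (depending on a free parameter $\lambda$) for which $\int g = T + E/2$ and whose Fourier transform $\hat g$ has short effective support. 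Then
\begin{equation*}
\int_0^T |S(t)|^2 dt \leq \int_{\mathbb{R}} g(t) |S(t)|^2 dt = \hat g(0)\sum |a_n|^2 + \sum_{m\neq n} a_m \bar a_n \hat g(\log(m/n)),
\end{equation*}
so the problem reduces to bounding a kernel sum whose kernel decays away from $0$.

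To handle the off-diagonal piece I would apply the Montgomery--Vaughan Hilbert-type inequality
\begin{equation*}
\Bigl|\sum_{m\neq n} \frac{c_m \bar c_n}{\lambda_m - \lambda_n}\Bigr| \leq \pi \sum_n \delta_n^{-1} |c_n|^2, \qquad \delta_n = \min_{m\neq n} |\lambda_m - \lambda_n|,
\end{equation*}
with $\lambda_n = \log n$ and $c_n = a_n n^{iT/2}$, so that the unimodular shift $e^{iT(\lambda_m-\lambda_n)/2}$ symmetrises the factor $e^{iT\Delta} - 1$ and brings the numerator into the form suited to the inequality. The spacing
\begin{equation*}
\delta_n = \log\!\Bigl(1 + \tfrac{1}{n}\Bigr) \geq \frac{1}{n + \tfrac{1}{2}}
\end{equation*}
(from an elementary Taylor estimate) then yields $\pi \sum \delta_n^{-1}|a_n|^2 \leq 2\pi\sum(n + \tfrac{1}{2})|a_n|^2$, which already has precisely the shape $E\sum n|a_n|^2 + (E/2)\sum|a_n|^2$ appearing in the statement, and explains why the same constant $E$ governs both error terms.

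\textbf{Main obstacle.} The delicate part is the quantitative optimisation that produces the exact constant $E = 2\pi\sqrt{1 + \tfrac{2}{3}\sqrt{6/5}}$. The smoothing contributes an excess $\int g - T$ which grows as the support of $\hat g$ shrinks, while the off-diagonal contribution \emph{decreases} as this support shrinks; these two effects must be balanced in the free parameter $\lambda$ governing $g$. The nested square-root form of $E$ strongly suggests that the final constant emerges from a Cauchy--Schwarz or AM--GM step between the smoothing excess and the Hilbert contribution, optimised in $\lambda$ and then verified with rigorous interval arithmetic to pin down the numerical value. A minor secondary issue is the boundary case $n=1$, whose spacing $\log 2$ is larger than the generic $\approx 1/n$, but this merely contributes favourably to the sum $\sum n|a_n|^2$ and causes no essential difficulty.
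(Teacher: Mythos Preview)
Your third paragraph contains the correct skeleton: expand the square, split the off-diagonal numerator $e^{iT\Delta}-1$ into two pieces, apply a weighted Hilbert inequality to each with $\lambda_n=\log n$, and bound $\delta_n^{-1}<n+\tfrac12$. This is exactly what the paper does. But two things in your proposal are off.

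First, the smoothing majorant $g$ in your second paragraph is a red herring. No smoothing is used here at all; the paper's proof is a direct application of the weighted Hilbert inequality to the raw off-diagonal sum, as in \cite[Cor.~3]{MV74}. (Smoothing would in any case give only an upper bound, whereas the statement is two-sided.)

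Second, and more importantly, you quote the weighted Hilbert inequality with constant $\pi$. That constant is correct only in the \emph{equally spaced} case; for variable spacings $\delta_n$ it is the Montgomery--Vaughan conjecture and remains unproven. Montgomery--Vaughan \cite[Thm.~2]{MV74} proved the weighted form with constant $\tfrac{3\pi}{2}$, and Preissmann \cite{Pr84} sharpened this to $\pi\sqrt{1+\tfrac{2}{3}\sqrt{6/5}}$. The constant $E$ in the proposition is nothing more than twice Preissmann's constant (the factor $2$ from the two applications), and the ``$+\tfrac{1}{2}$'' in $\delta_n^{-1}<n+\tfrac12$ accounts for the $E/2$ attached to $T$. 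So your ``main obstacle'' paragraph misdiagnoses the source of the nested square root: there is no balancing of a smoothing parameter, no Cauchy--Schwarz optimisation --- the constant is inherited verbatim from Preissmann's bound on the weighted Hilbert form.
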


\begin{proof}
We use the main theorem in \cite{Pr84}, which improves on \cite[Cor.~2]{MV74} (the theorem states $C=\frac{4}{3}$, which yields $E=\frac{8}{3}\pi$, but it is proved with a lower $C$ that yields our $E$). We apply it then as in \cite[Cor.~3]{MV74}, with a numerical improvement given by $\log^{-1}\left(\frac{n+1}{n}\right)<n+\frac{1}{2}$, proved directly by calculus. See also \cite[Satz 4.4.3]{Br95} for an older explicit result that used $15n$ instead of $\frac{8}{3}\pi\left(n+\frac{1}{2}\right)$.
\end{proof}

If $\{a_n\}_{n=1}^\infty$ is a real sequence then the error term factor may be improved to $\frac{E}{2}$. As pointed out in \cite[Lemma 6.5]{Ra16}, a
term cancels out, allowing us to gain a factor of $2$ inside the error term.

\tiny
\begin{mysage}
H=RIF(zeta(2)-1) #This H is the constant in the statement of {le:stieltjes}, the bound on zeta(s)-1/(s-1) for 1<s<2
\end{mysage}
\normalsize

\begin{lemma}\label{le:stieltjes}
For any $1<\sigma<2$ we have $
\frac{1}{\sigma-1}<\zeta(\sigma)<\frac{1}{\sigma-1}+\zeta(2)-1.
$
The lower bound holds also for $0<\sigma<1$.
\end{lemma}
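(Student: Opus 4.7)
The plan is to derive both inequalities from a single integral representation of $g(\sigma) := \zeta(\sigma) - \tfrac{1}{\sigma-1}$. By Abel summation (equivalently, the first-order Euler--Maclaurin formula from Theorem~\ref{EMSF} with $K=1$, applied to $f(x) = x^{-\sigma}$), for $\sigma>1$ one obtains
\[
\zeta(\sigma) = 1 + \frac{1}{\sigma-1} - \sigma \int_1^\infty \frac{\{t\}}{t^{\sigma+1}}\,dt,
\]
and since the integral converges absolutely for $\sigma>0$, the identity extends by analytic continuation to all $\sigma > 0$ with $\sigma \neq 1$.

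The lower bound follows immediately from $\{t\}\in[0,1)$ (with strict inequality on each interval $(n, n+1/2)$):
\[
0 < \sigma\int_1^\infty \frac{\{t\}}{t^{\sigma+1}}\,dt < \sigma\int_1^\infty\frac{dt}{t^{\sigma+1}} = 1,
\]
so $g(\sigma)\in(0,1)$, and in particular $\zeta(\sigma)>\tfrac{1}{\sigma-1}$. This argument is valid for all $\sigma > 0$ with $\sigma \neq 1$, handling both the case $0<\sigma<1$ and the lower bound in the case $1<\sigma<2$.

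For the upper bound, observe that $g(2) = \zeta(2) - 1$, so the claim is equivalent to $g(\sigma) < g(2)$ for $\sigma \in (1, 2)$. I would establish this by showing $g$ is strictly increasing on $[1, 2]$. Differentiating the integral representation,
\[
g'(\sigma) = \int_1^\infty \{t\}(\sigma\ln t - 1)\,t^{-\sigma-1}\,dt,
\]
whose integrand changes sign at $t = e^{1/\sigma}\in(\sqrt{e}, e)$ for $\sigma\in(1,2)$. The main obstacle is showing that the positive tail (over $t > e^{1/\sigma}$) strictly dominates the negative portion on the short interval $(1, e^{1/\sigma})$. I would handle this by integrating by parts once more, replacing $B_1(\{t\}) = \{t\}-\tfrac{1}{2}$ by its antiderivative $\tfrac{1}{2} B_2(\{t\})$ (bounded by $1/12$ by the lemma on Bernoulli polynomials), which yields a representation whose sign is more transparent. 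As a fallback consistent with the paper's methodology, the strict positivity of $g'$ on $[1,2]$ admits a rigorous computer-verified proof via interval arithmetic with ARB, which is especially natural here given the relative smallness of $\zeta(2) - 1 - \gamma \approx 0.068$.
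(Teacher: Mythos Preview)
Your lower bound is complete and cleaner than the paper's: from the integral representation $g(\sigma)=1-\sigma\int_1^\infty\{t\}\,t^{-\sigma-1}\,dt$ (valid for $\sigma>0$) the inequality $0<g(\sigma)<1$ is immediate. The paper instead bounds the Laurent series $g(\sigma)=\sum_{n\ge 0}\frac{(-1)^n\gamma_n}{n!}(\sigma-1)^n$ using Lavrik's estimate $|\gamma_n|\le n!/2^{n+1}$, which works but is less direct.

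For the upper bound, both you and the paper reduce to $g'(\sigma)>0$ on $(1,2)$, but your argument stops short of establishing it. The integration-by-parts idea (passing to $B_2(\{t\})$) is plausible but you have not carried it out, and there is no guarantee the resulting expression has ``more transparent'' sign --- the margin $g(2)-g(1)=\zeta(2)-1-\gamma\approx 0.068$ you yourself flag suggests any such estimate will be delicate. The ARB fallback is also not a proof as stated: $g'$ must be checked on a \emph{continuum} $[1,2]$, so you would still need an explicit modulus of continuity (or interval enclosures in $\sigma$) to reduce to finitely many evaluations, and you have not supplied one. By contrast, the paper differentiates the Laurent series, writes $g'(\sigma)=\sum_{n\ge 0}\frac{(-1)^{n+1}\gamma_{n+1}}{n!}(\sigma-1)^n>-\gamma_1-\sum_{n\ge 1}\frac{|\gamma_{n+1}|}{n!}$, computes the first ten Stieltjes constants numerically, and bounds the tail by Lavrik's inequality --- a genuinely finite check that closes the argument. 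Your integral route could likely be completed, but as written the upper bound has a gap.
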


See also \cite[Lemma~5.4]{Ra16} for a better upper bound than the above for $\sigma$ close to $1$.

\begin{proof}
The Laurent expansion of $\zeta$ is
\begin{equation}\label{laurent}
f(\sigma)=\zeta(\sigma)-\frac{1}{\sigma-1}=\sum_{n=0}^{\infty}\frac{(-1)^{n}\gamma_{n}}{n!}(\sigma-1)^{n}
\end{equation}
where the $\gamma_{n}$ are the Stieltjes constants. For the upper bound, it suffices to prove that $f'(\sigma)$ is positive for $\sigma\in(1,2)$, so that $f(\sigma)<f(2)$: one can use
\begin{equation*}
f'(\sigma)=\sum_{n=0}^{\infty}\frac{(-1)^{n+1}\gamma_{n+1}}{n!}(\sigma-1)^{n}>-\gamma_{1}-\sum_{n=1}^{\infty}\frac{|\gamma_{n+1}|}{n!},
\end{equation*}
compute the first $10$ constants directly and then use the bound $|\gamma_{n}|\leq\frac{n!}{2^{n+1}}$ (for $n\geq 1$) given by Lavrik in \cite[Lemma 4]{Lav76}, so that $\sum_{n=10}^{\infty}\frac{|\gamma_{n+1}|}{n!}\leq\frac{1}{2}\sum_{n=11}^{\infty}\frac{n}{2^{n}}<10^{-2}$. The lower bound is even simpler to obtain: in order to prove that $f(\sigma)>0$ for $0<\sigma<2$ and $\sigma\neq 1$, we compute directly $\gamma_{0}=\gamma$ and then we bound the absolute value of the rest of the series in \eqref{laurent} by using again Lavrik's estimations.
\end{proof}

\begin{lemma}\label{lem:handy}
Let $A,B\geq 0$. Then, for any $\rho>0$,
\begin{align*}
(A + B)^2 &\leq (1 + \rho) A^2 + \left(1 + \frac{1}{\rho}\right) B^2, \\
(A - B)^2 &\geq (1 - \rho) A^2 + \left(1 - \frac{1}{\rho}\right) B^2.
\end{align*}
\end{lemma}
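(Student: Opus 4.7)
The plan is to reduce both inequalities to a single application of the AM--GM inequality on the cross term. The key observation is that for any $\rho>0$ and any real $A,B$,
\[
0 \leq \left(\sqrt{\rho}\,A - \tfrac{B}{\sqrt{\rho}}\right)^{2} = \rho A^{2} - 2AB + \tfrac{1}{\rho} B^{2},
\]
which rearranges to the basic bound $2AB \leq \rho A^{2} + \rho^{-1} B^{2}$.

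From here, for the first inequality I would expand $(A+B)^{2} = A^{2} + 2AB + B^{2}$ and substitute the upper bound on $2AB$, obtaining $(A+B)^{2} \leq (1+\rho) A^{2} + (1+\rho^{-1}) B^{2}$ directly. For the second, I would expand $(A-B)^{2} = A^{2} - 2AB + B^{2}$ and invoke the same bound in the form $-2AB \geq -\rho A^{2} - \rho^{-1} B^{2}$, which yields $(A-B)^{2} \geq (1-\rho) A^{2} + (1-\rho^{-1}) B^{2}$.

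There is no real obstacle: the proof is completion of squares applied twice, and each of the two inequalities is essentially a one-line consequence of the same AM--GM estimate. The hypothesis $A, B \geq 0$ is not strictly needed for the algebraic inequalities themselves, but it is natural given the intended applications of the lemma, where $A$ and $B$ will typically stand for norms or absolute values of quantities appearing elsewhere in the paper.
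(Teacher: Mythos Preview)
Your proof is correct and follows the same approach as the paper: expand the square and bound the cross term via AM--GM (equivalently, completion of the square $(\sqrt{\rho}A - B/\sqrt{\rho})^2 \geq 0$). Your remark that the hypothesis $A,B\geq 0$ is not strictly necessary for the algebra is also accurate.
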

Note that the inequalities are tight when $\rho = \frac{B}{A}$.
\begin{proof}
Expand the square. By the arithmetic-geometric mean inequality, $2 |A B| = 2 (\sqrt{\rho} |A|) \cdot \frac{|B|}{\sqrt{\rho}}
  \leq \rho A^2 + \frac{B^2}{\rho}$.
\end{proof}

\section{First approach: as in a mean value theorem}\label{Exp}

We will first bound (Proposition~\ref{prop:hippogryph}) the $L^2$ norm of
the function $t\mapsto \left((\sigma+i t)^{-1}-G(\sigma + i t)\right) \zeta(\sigma + i t)$,
where $G$ is the Mellin transform of a function
$g:\lbrack 0,\infty)\to \mathbb{R}$. Then we will choose $g$ so
that $G(\sigma + i t)$ is close to $0$ for $|t|\geq T$, while keeping the aforementioned $L^2$ bound small.

We will first give a general treatment for $g$ arbitrary (\S \ref{L2}).
It will turn out to be easy to choose a $g$ that is optimal within our general
statement (\S \ref{subs:optim}). However, that optimality will turn out to be
an artifact of the form of our general statement. We will be able to do better
(at least for $\sigma\geq 1/2$) by choosing a different $g$, whose transform $G$ we can compute explicitly (\S\ref{subs:better}). Our final estimates are as follows.

\tiny
\begin{mysage}
#For checks about the following constants, see at the end of the file.
kpar_main=round( 27.8821 ,ndigits=5)
k111_main=round( 0.15659 ,ndigits=5)
k112_main=round( 0.15655 ,ndigits=5)
k113_main=round( 0.00979 ,ndigits=5)
k114_main=round( 0.07407 ,ndigits=5)
k12x_main=round( 0.60031 ,ndigits=5)
c21x_main=round( 2.4476 ,ndigits=5)
c22x_main=round( 1.58493 ,ndigits=5)
k314_main=round( 0.11361 ,ndigits=5)
c30x_main=round( 0.39113 ,ndigits=5)
\end{mysage}
\normalsize

\begin{theorem}\label{thzeta}
Let $0<\sigma\leq 1$ and $T\geq T_{0}=\sage{TH}$. Then the integral
\begin{equation*}
\frac{1}{2\pi i}\left(\int_{\sigma-i\infty}^{\sigma-iT}+\int_{\sigma+iT}^{\sigma+i\infty}\right)\left|\frac{\zeta(s)}{s}\right|^{2}ds
\end{equation*}
is bounded by
\begin{align*}
& \frac{3\zeta(2\sigma)}{5T}+\left(\frac{c_{111}}{\sigma}+\frac{c_{112}}{2\sigma+1}+\frac{c_{113}}{\sigma+1}-\frac{c_{114}}{2\sigma-1}\right)\frac{1}{T^{2\sigma}}+\frac{c_{12*}}{T^{2\sigma+1}}, & & \text{if $\sigma>\frac{1}{2}$,} \\
& \frac{3\log T}{5T}+\frac{c_{21*}}{T}+\frac{c_{22*}}{T^{2}}, & & \text{if $\sigma=\frac{1}{2}$,} \\
& \left(\frac{c_{311}}{\sigma}+\frac{c_{312}}{2\sigma+1}+\frac{c_{313}}{\sigma+1}+\frac{c_{314}}{1-2\sigma}\right)\frac{1}{T^{2\sigma}}+\frac{c_{30*}\zeta(2\sigma)}{T}+\frac{c_{32*}}{T^{2\sigma+1}}, & & \text{if $\sigma<\frac{1}{2}$,}
\end{align*}
where
\begin{align*}
c_{11i}= & \ \kappa^{\sigma}\kappa_{11i} \ (i=1,2,3,4), & \kappa= & \ \sage{kpar_main}, & \kappa_{12*}= & \ \sage{k12x_main}, \\
c_{12*}= & \ \kappa^{\sigma}\kappa_{12*}, & \kappa_{111}= & \ \sage{k111_main}, & c_{21*}= & \ \sage{c21x_main}, \\
c_{31i}= & \ c_{11i} \ (i=1,2,3), & \kappa_{112}= & \ \sage{k112_main}, & c_{22*}= & \ \sage{c22x_main}, \\
c_{314}= & \ \kappa^{\sigma}\kappa_{314}, & \kappa_{113}= & \ \sage{k113_main}, & \kappa_{314}= & \ \sage{k314_main}, \\
c_{32*}= & \ c_{12*}, & \kappa_{114}= & \ \sage{k114_main}, & c_{30*}= & \ \sage{c30x_main}.
\end{align*}
\end{theorem}

We have chosen $T_0=\sage{TH}$ for simplicity. In actual fact,
$T_0=\sage{THTrue}$ is the least $T$ for which we are able to reach $\frac{3}{5}$ as a main term coefficient for $\sigma=\frac{1}{2}$.

\subsection{Basic estimate}\label{L2}

Let us first give a bound valid for a function $g$ that satisfies a number of general conditions.
The proof is in parts close to, and in fact inspired by, proofs of classical mean value theorems, such as \cite[Thm.~6.1]{Mo71} (see in particular the exposition
in \cite[Thm.~9.1]{IK04}).

There are differences all the same. First, in a mean value theorem, we typically 
work with a finite sum $\sum_{n\leq X} a_n n^{i t}$, and obtain a bound that contains a term proportional to $X$, whereas here
we work directly with $\zeta$ and thus with an infinite sum.

Secondly, the proof in \cite[Thm.~9.1]{IK04} (or \cite[Thm.~6.1]{Mo71}) majorizes the characteristic function of a vertical interval by a continuous function of compact support, and then uses the decay in the inverse Mellin transform to bound the contribution of off-diagonal terms. On the vertical line, we choose to work
with a function of the form $1- G(s) s$, where $G$ is the Mellin transform
of a function $g$ satisfying certain properties. As a consequence,
off-diagonal terms vanish, outside an initial interval $\lbrack 0,\delta\rbrack$
that makes a small contribution.

\begin{proposition}\label{prop:hippogryph}
  Let $g:\lbrack 0,\infty) \to \mathbb{R}$ be a continuous,
  piecewise $C^1$ function such that $g$ and $g'$ have bounded total variation.
  Assume that \textbf{(a)} $\int_0^\infty g(t) dt = 1$, \textbf{(b)}  $0\leq g(t)\leq 1$
  for all $t$, \textbf{(c)}  $g(t)=1$ for $0\leq t\leq 1-\delta$ and
  $g(t)=0$ for $t\geq 1+\delta$, where $0<\delta\leq\frac{1}{2}$, \textbf{(d)} 
$g(1+t)=1-g(1-t)$ for $0\leq t\leq\delta$.
  Let \begin{equation}
    I(\sigma) = \frac{1}{2\pi i}\int_{\sigma-i\infty}^{\sigma+i\infty}\left|\frac{1}{s}-G(s)\right|^{2}|\zeta(s)|^{2}ds,
  \end{equation}
where $G$ is the Mellin transform of $g$.
Then, for any $\sigma>0$,
 \begin{equation}\label{eq:dusino} I(\sigma)\leq
c(\sigma,\alpha)\cdot \delta^{2\sigma} + 2 \beta \delta\cdot \begin{cases}
  \zeta(2\sigma) - \frac{\delta^{2\sigma-1}}{2\sigma-1} +
  \frac{\delta^{2\sigma}}{1-\delta^2}
  &\text{if $\sigma \ne \frac{1}{2}$,}\\
  \log\left(\frac{1}{\delta}\right) + \gamma + \frac{\delta}{2(1-\delta^2)}
    &\text{if $\sigma=\frac{1}{2}$,} 
\end{cases}
   \end{equation}
 where $\alpha = \frac{\delta}{16} \int_0^\infty |g''(t)| dt$,
 $\beta = \frac{1}{\delta} \int_1^{1+\delta} |g(y)|^2 dy$
 and
 $c(\sigma,\alpha) = \frac{1}{8\sigma} + \frac{\alpha}{2\sigma+1} + \frac{\alpha^2}{2\sigma+2}$.
\end{proposition}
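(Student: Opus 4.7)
The starting point will be the Mellin identity \eqref{identity}, which gives
\[(1/s - G(s))\zeta(s) = \mathcal{M}h(s), \qquad h(x) := \lfloor 1/x\rfloor - \sum_{n=1}^{\infty} g(nx) = -\sum_{n=1}^{\infty} g^*(nx),\]
where $g^*(y) := g(y) - \mathds{1}_{(0,1]}(y)$ is supported on $[1-\delta, 1+\delta]$ and, by hypothesis (d), antisymmetric about $y=1$: $g^*(1+t) + g^*(1-t) = 0$ for $t\in[0,\delta]$. Since $h$ is bounded and compactly supported on $(0,1+\delta]$, Plancherel's identity \eqref{eq:Plancherel} applies for every $\sigma>0$ and rewrites
\[I(\sigma) = \int_0^\infty |h(x)|^2 x^{2\sigma-1}\,dx.\]
I will split this integral at $x=\delta$, treating each piece with a different estimate.

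On $(0,\delta]$, I will apply Lemma~\ref{le:EulerMc} to $f(y)=g(xy)$. The input values work out to $f(0)=g(0)=1$, $\lim_{y\to 0^+}f'(y)=0$ (as $g\equiv 1$ near $0$), $\int_0^\infty f = 1/x$ (hypothesis (a)), and $\|f''\|_1 = x\|g''\|_1 = 16\alpha x/\delta$, yielding $\sum_n g(nx) = 1/x - 1/2 + O^*(\alpha x/\delta)$. Combined with $\lfloor 1/x\rfloor = 1/x - \{1/x\}$, this gives $|h(x)| \leq |1/2 - \{1/x\}| + \alpha x/\delta \leq 1/2 + \alpha x/\delta$. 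Expanding $(1/2 + \alpha x/\delta)^2$ and integrating termwise against $x^{2\sigma-1}\,dx$ on $(0,\delta]$ produces exactly the three summands of $c(\sigma,\alpha)\delta^{2\sigma}$.

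On $[\delta,\infty)$, the integers $n$ with $nx\in[1-\delta,1+\delta]$ fill an interval of length $2\delta/x \leq 2$, so at most two occur. Whenever two consecutive integers $n_0, n_0+1$ appear, the gap $x > \delta$ exceeds the width $\delta$ of each half-window $[1-\delta,1]$, $[1,1+\delta]$, forcing $n_0 x \leq 1 \leq (n_0+1)x$; hence $g^*(n_0 x) \leq 0 \leq g^*((n_0+1)x)$ and the cross term in $|h(x)|^2 = (g^*(n_0 x) + g^*((n_0+1)x))^2$ is non-positive. Therefore $|h(x)|^2 \leq \sum_n |g^*(nx)|^2$, and after the substitution $u=nx$,
\[\int_\delta^\infty |h|^2 x^{2\sigma-1}\,dx \leq \sum_n \frac{1}{n^{2\sigma}} \int_{n\delta}^{1+\delta} (g^*(u))^2 u^{2\sigma-1}\,du.\]
The antisymmetry of $g^*$ about $1$, via $u \mapsto 2-u$, recasts the full integral (for $n\delta \leq 1-\delta$) as $\int_1^{1+\delta} g(v)^2[(2-v)^{2\sigma-1}+v^{2\sigma-1}]\,dv$, comparable to $2\beta\delta$; the partial integrals for $n$ near $1/\delta$ add at most $O(\delta^{2\sigma+1})$; and Lemma~\ref{recip} (respectively Lemma~\ref{le:harmonic} if $\sigma=1/2$) replaces $\sum_{n\leq 1/\delta} 1/n^{2\sigma}$ by $\zeta(2\sigma) - \delta^{2\sigma-1}/(2\sigma-1) + O^*(\delta^{2\sigma}/2)$ (resp.\ $\log(1/\delta)+\gamma+O^*(\delta/2)$), producing the stated bound.

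The hard part will be the bookkeeping on $[\delta,\infty)$: confirming the sign of the cross term through the antisymmetry plus the gap condition $x > \delta$; performing Jensen-type comparisons of $(2-v)^{2\sigma-1} + v^{2\sigma-1}$ with $2$, which behave differently according to whether $\sigma > 1/2$, $\sigma = 1/2$, or $\sigma < 1/2$ (where $\zeta(2\sigma) < 0$ and the sum is tail-dominated); and bundling all residual corrections into the clean $\delta^{2\sigma}/(2(1-\delta^2))$ term appearing in the statement.
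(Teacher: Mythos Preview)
Your proposal is correct and follows essentially the same route as the paper: Plancherel reduces $I(\sigma)$ to $\int_0^\infty |h|^2 x^{2\sigma-1}\,dx$, the split at $x=\delta$ is the same, the Euler--Maclaurin bound on $(0,\delta]$ is identical, and your cross-term argument on $[\delta,\infty)$ (noting $g^*(n_0x)\le 0\le g^*((n_0{+}1)x)$) is a clean variant of the paper's observation that $|h(x)|\le\max\{|1-g(n_{0,x}x)|,|g(n_{1,x}x)|\}$, leading to the same bound $|h(x)|^2\le\sum_n|g^*(nx)|^2$. One bookkeeping difference worth noting: rather than separating ``full'' from ``partial'' integrals and handling the tail $n$ near $1/\delta$ ad hoc, the paper swaps the order to $\int_{1-\delta}^{1+\delta}\bigl(\sum_{n\le y/\delta}n^{-2\sigma}\bigr)|g^*(y)|^2 y^{2\sigma-1}\,dy$ and applies Lemma~\ref{recip} (or Lemma~\ref{le:harmonic}) pointwise in $y$, so that the three resulting pieces combine directly via concavity/convexity of $y^{2\sigma-1}$ and of $y^{-1}$ into the exact form $2\beta\delta\bigl(\zeta(2\sigma)-\tfrac{\delta^{2\sigma-1}}{2\sigma-1}+\tfrac{\delta^{2\sigma}}{2(1-\delta^2)}\bigr)$ without any residual $O(\delta^{2\sigma+1})$ to absorb.
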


\begin{proof}
Since $g$ is bounded, $G(s)$ is well-defined when $\Re(s)>0$.
For $\Re(s)>1$, we know from \eqref{eq:MelwithL} that 
$G(s)\zeta(s)$ is the Mellin transform of the function $x\mapsto\sum_{n=1}^{\infty}g(nx)$ (well-defined by (c)) and from \eqref{eq:Melchi1} that $\frac{\zeta(s)}{s}$ is the Mellin transform of $x\mapsto\sum_{n=1}^{\infty}\mathds{1}_{(0,1/n]}(x)$.

Let
\begin{equation}\label{eq:hdef}
h(x)=\sum_{n=1}^{\infty}\left(\mathds{1}_{[0,1/n]}(x)-g(nx)\right)=\left\lfloor\frac{1}{x}\right\rfloor-\sum_{n=1}^{\infty}g(nx).
\end{equation}
Then
\begin{equation}\label{eq:Mh}
\mathcal{M}h(s)=\left(\frac{1}{s}-G(s)\right)\zeta(s),
\end{equation}
for $\Re(s)>1$. On one hand, by \eqref{eq:hsmallbound}, $h$ is bounded, and thus
$\mathcal{M}h(s)$ is well-defined for $\Re(s) >0$. On the other hand, by condition (a),
$G(1)=1$ and thus the right side of \eqref{eq:Mh} is holomorphic for $\Re(s) > 0$. Hence,
by analytic continuation, \eqref{eq:Mh} holds for $\Re(s) > 0$ and therefore, by \eqref{eq:Plancherel},
\begin{equation}\label{mell}
\frac{1}{2\pi i}\int_{\sigma-i\infty}^{\sigma+i\infty}\left|\frac{1}{s}-G(s)\right|^{2}|\zeta(s)|^{2}ds=\int_{0}^{\infty}|h(x)|^{2}x^{2\sigma-1}dx,
\end{equation}
for any $s\in\mathbb{C}$ with $\Re(s)>0$, provided that
the integral on the right side converges. Bounding the integral on the right will suffice to derive the result.

Let us first find an upper bound for the value of $|h(x)|$ to use for small values of $x$ (namely, $x\leq\delta$). Using Lemma~\ref{le:EulerMc} and recalling that $g(0)=1$, $g(1)=0$, we obtain that
\begin{align*}
\sum_{n=1}^{\infty}g(nx) 
 = & \ \frac{1}{x}\int_{0}^{\infty}g(t)dt-\frac{1}{2}+O^{*}\left(\frac{1}{16}\int_{0^{+}}^{\infty}|g''(tx)|x^{2}dt\right) \\
 = & \ \frac{1}{x}-\frac{1}{2}+O^{*}\left(\frac{x}{16}\int_{0^{+}}^{\infty}|g''(t)|dt\right).
\end{align*}
By putting the above equality inside \eqref{eq:hdef}, we obtain for any $x\geq 0$ that
\begin{align}
|h(x)|= \ \left|\left\lfloor\frac{1}{x}\right\rfloor-\frac{1}{x}+\frac{1}{2}+O^{*}\left(\frac{x}{16}\int_{0^{+}}^{\infty}|g''(t)|dt\right)\right|
 \leq \ \frac{1}{2}+\frac{x}{16}\int_{0^{+}}^{\infty}|g''(t)|dt, \label{eq:hsmallbound}
\end{align}
since $\left|\lfloor t\rfloor-t+\frac{1}{2}\right|\leq\frac{1}{2}$ for all $t\in\mathbb{R}$.

For $x>\delta$, we bound $h$ in another way; by its definition and condition (c)
\begin{align}
h(x) & = \sum_{nx\leq 1}(1-g(nx))-\sum_{1<nx\leq 1+\delta}g(nx) \nonumber \\
 & = \sum_{1-\delta\leq nx\leq 1 }(1-g(nx))-\sum_{1<nx\leq 1+\delta}g(nx). \label{eq:bound}
\end{align}
  When $x>2\delta$, there is at most one integer $n$ such that
  $n x \in [1-\delta,1+\delta]$, since
  $\frac{1+\delta}{x} - \frac{1-\delta}{x} = \frac{2 \delta}{x} < 1$.
  For the same reason, when $\delta < x\leq 2\delta$, there
  can be at most one integer $n$ (call it $n_{0,x}$) such that
  $n x \in [1-\delta,1]$ and at most one integer $n$
  (call it $n_{1,x}$) such that
  $n x \in [1,1+\delta]$.
  Since $0\leq g(t)\leq 1$ for all $t$, we know that $1-g(n x)\geq 0$ and
  $- g(n x)\leq 0$, and so the last two sums in \eqref{eq:bound}
  have opposite sign. Hence
$|h(x)| \leq \max\left\{|1-g(n_{0,x} x)|,|g(n_{1,x})|\right\}$.
It follows that
\begin{align}
\int_0^\infty |h(x)|^2 x^{2\sigma-1} dx \leq & \ \int_0^{\delta} |h(x)|^{2} x^{2\sigma-1} dx \nonumber \\
 & \ + \sum_{n\leq \frac{1}{\delta}} \int_{\max\left\{\frac{1-\delta}{n},\delta\right\}}^{\frac{1}{n}} |1-g(n x)|^2 x^{2\sigma-1} dx \nonumber \\
 & \ + \sum_{n\leq \frac{1+\delta}{\delta}} \int_{\max\left\{\frac{1}{n},\delta\right\}}^{\frac{1+\delta}{n}} |g(n x)|^2 x^{2\sigma-1} dx. \label{eq:usuru}
\end{align}
 Setting $y= n x$ and changing the order of summation,
 we get
\begin{equation*}
\sum_{n\leq \frac{1}{\delta}}
 \int_{\max\left\{\frac{1-\delta}{n},\delta\right\}}^{\frac{1}{n}}
 |1-g(n x)|^2 x^{2\sigma-1} dx
 = 
 \int_{1-\delta}^1 \left(\sum_{n\leq \frac{y}{\delta}} \frac{1}{n^{2\sigma}}\right)
 |1-g(y)|^2 y^{2\sigma -1} dy,
\end{equation*}
 and, similarly,
\begin{equation*}
\sum_{n\leq \frac{1+\delta}{\delta}} \int_{\max\left\{\frac{1}{n},\delta\right\}}^{\frac{1+\delta}{n}} |g(n x)|^2 x^{2\sigma-1} dx = 
 \int_1^{1+\delta} \left(\sum_{n\leq \frac{y}{\delta}} \frac{1}{n^{2\sigma}}\right)
 |g(y)|^2 y^{2\sigma -1} dy.
\end{equation*}

Using \eqref{eq:hsmallbound} in the first integral on the right hand side of \eqref{eq:usuru}, we obtain
\begin{equation}\label{hxshort}
\int_{0}^{\delta}|h(x)|^{2}x^{2\sigma-1}dx\leq \delta^{2\sigma}\cdot\left(\frac{1}{8\sigma}+\frac{\alpha}{2\sigma+1}+\frac{\alpha^2}{2(\sigma+1)}\right),
\end{equation}
where $\alpha=\alpha_{g,\delta}=\frac{\delta}{16}\int_{0^{+}}^{\infty}|g''(t)|dt$.

As for the remaining terms, we just use the bounds
\begin{equation*}
\sum_{n\leq x} \frac{1}{n^{2\sigma}}
\leq \begin{cases}
  \zeta(2\sigma) + \frac{x^{1-2 \sigma}}{1- 2\sigma} + x^{-2 \sigma} 
  & \text{if $\sigma\ne\frac{1}{2}$,}\\
  \log x + \gamma + \frac{1}{2 x}
  & \text{if $\sigma=\frac{1}{2}$,}
  \end{cases}
\end{equation*} 
which we obtain from Lemmas~\ref{le:harmonic} and \ref{recip}, valid for $x\geq 1$ (for $\delta\leq\frac{1}{2}$ and $y\geq 1-\delta$ we certainly have
$\frac{y}{\delta}\geq 1$). Thus, the second and third terms on the right side of \eqref{eq:usuru} add
up to at most
\begin{align}
& \zeta(2\sigma)\left(\int_{1-\delta}^{1}|1-g(y)|^{2}y^{2\sigma-1}dy+\int_{1}^{1+\delta}|g(y)|^{2}y^{2\sigma-1}dy\right) \nonumber \\
& + \frac{\delta^{2\sigma-1}}{1 - 2\sigma}\left(\int_{1-\delta}^{1}|1-g(y)|^{2}dy+\int_{1}^{1+\delta}|g(y)|^{2}dy\right) \nonumber \\
& +\delta^{2\sigma}\left(\int_{1-\delta}^{1}|1-g(y)|^{2}\frac{dy}{y}+\int_{1}^{1+\delta}|g(y)|^{2}\frac{dy}{y}\right), \label{hxlong1}
\end{align}
if $0<\sigma\leq 1$ with $\sigma\ne\frac{1}{2}$, and
\begin{align}
& \int_{1-\delta}^{1}|1-g(y)|^{2}\log\left(\frac{y}{\delta}\right)dy+\int_{1}^{1+\delta}|g(y)|^{2}\log\left(\frac{y}{\delta}\right)dy \nonumber \\
& +\gamma\left(\int_{1-\delta}^{1}|1-g(y)|^{2}dy+\int_{1}^{1+\delta}|g(y)|^{2}dy\right) \nonumber \\
& +\frac{\delta}{2}\left(\int_{1-\delta}^{1}|1-g(y)|^{2}\frac{dy}{y}+\int_{1}^{1+\delta}|g(y)|^{2}\frac{dy}{y}\right), \label{hxlong2} 
\end{align} 
if $\sigma=\frac{1}{2}$.

When $\frac{1}{2}\leq\sigma\leq 1$, as the functions $f(y)=y^{2\sigma-1}$ and $f(y)=\log\left(\frac{y}{\delta}\right)$ are concave, we have by  condition (d) that
\begin{align}
 & \ \int_{1-\delta}^{1}|1-g(y)|^{2}f(y)dy+\int_{1}^{1+\delta}|g(y)|^{2}f(y)dy \nonumber \\
 = & \ \int_{1}^{1+\delta}|g(y)|^{2}(f(2-y)+f(y))dy \leq 2f(1)\int_{1}^{1+\delta}|g(y)|^{2}dy. \label{con}
\end{align}
In the first line of \eqref{hxlong1}, if $\sigma<\frac{1}{2}$, as $\zeta(2\sigma)<0$ and $f(y) = y^{2\sigma - 1}$ is convex, we employ the following  {\em lower} bound
\begin{equation}\label{eq:conv}
  \int_{1-\delta}^{1}|1-g(y)|^{2}f(y)dy+\int_{1}^{1+\delta}|g(y)|^{2}f(y)dy \geq
2f(1)\int_{1}^{1+\delta}|g(y)|^{2}dy. \end{equation}
To estimate the integrals in \eqref{hxlong1}, \eqref{hxlong2} that have
$\frac{dy}{y}$ in the integrand, we just use the
fact that $y\mapsto y^{-1}$ is convex, so that for all $0\leq t\leq \delta$, $(1-t)^{-1} + (1+t)^{-1} \leq (1-\delta)^{-1} + (1+\delta)^{-1}=
\frac{2}{1-\delta^2}$.
 
Consider now $\beta=\beta_{g,\delta}=\frac{1}{\delta}\int_{1}^{1+\delta}|g(y)|^{2}dy$. Putting together \eqref{hxshort}, the cases \eqref{hxlong1} and \eqref{hxlong2} and the estimates \eqref{con} and \eqref{eq:conv}, we finally obtain the following upper bounds for $\int_{0}^{\infty}|h(x)|^{2}x^{2\sigma-1}dx$:
\label{eq:doneboundL} 
\begin{equation}\label{eqqq}
2\beta\left(\delta\zeta(2\sigma)-\frac{\delta^{2\sigma}}{2\sigma -1}+
  \frac{\delta^{2\sigma+1}}{1-\delta^2}\right)
  +\delta^{2\sigma}\left(\frac{1}{8\sigma}+\frac{\alpha}{2\sigma+1}+\frac{\alpha^2}{2 (\sigma+1)}\right),
\end{equation}
if $0<\sigma\leq 1$ with $\sigma\ne \frac{1}{2}$, and, if $\sigma=\frac{1}{2}$,
\begin{equation}\label{eqqqq}
  2\beta\left(\delta\log\left(\frac{1}{\delta}\right)+
  \gamma \delta+\frac{\delta^2}{2(1-\delta^2)}\right)+\delta\left(\frac{1}{4}+\frac{\alpha}{2}+\frac{\alpha^2}{3}\right).
\end{equation}
\end{proof}

Note that for $0<\sigma<\frac{1}{2}$ the leading term in \eqref{eqqq} is of order $\delta^{2\sigma}$, as then $\zeta(2\sigma)<0$. The bound for $\sigma=\frac{1}{2}$ is what results from \eqref{eqqq} if we let $\sigma\to \frac{1}{2}^-$ or $\sigma\to \frac{1}{2}^+$.

{\bf Remarks.} Note that $\frac{\zeta(s)}{s}$ is the Mellin transform of $x\mapsto\left\lfloor 1/x\right\rfloor$. What we are doing is substract
an approximation $f(x)$ to
$\left\lfloor 1/x\right\rfloor$ such that the difference
$h(x)=\left\lfloor\frac{1}{x}\right\rfloor-f(x)$ has a well-defined Mellin transform throughout $\Re(s)>0$. Then the Mellin transform acts as an isometry throughout that region, and so, for $\Re(s)>0$,
\begin{equation}\label{eq:udururu} 
\frac{1}{2\pi i}\int_{\sigma-i\infty}^{\sigma + i \infty} \left|\frac{\zeta(s)}{s} - F(s)\right|^2 ds
\end{equation}
equals the $L^2$ norm of $h(x) x^{\sigma-\frac{1}{2}}$ on
$\lbrack 0,\infty)$.

In the proof of Proposition~\ref{prop:hippogryph}, we take $f(x) = \sum_{n=1}^\infty g(n x)$ with
$g$ continuous. Then $F(s) = G(s) \zeta(s)$. We need $G$ close to $\frac{1}{s}$ for $|\Im(s)|\leq T$ and close to $0$ for $|\Im(s)| > T$, but not too close or else $g$ would have slow decay, and $f$ would approximate $\left\lfloor\frac{1}{x}\right\rfloor$ poorly. This tension between two sources of error can be seen as reflecting the uncertainty principle.

Our requirement that $g$ be compactly supported is somewhat restrictive, but greatly simplifies the proof of Proposition~\ref{prop:hippogryph}: for $x\geq 2 \delta$, the sum $f(x) = \sum_{n=1}^\infty g(n x)$ contains only one term, and so does its square.

\subsection{An ``optimal'' choice of $g$}\label{subs:optim}

What we want is to bound the integral $\frac{1}{2\pi i}\left(\int_{\sigma-i\infty}^{\sigma-iT}+\int_{\sigma+iT}^{\sigma+i\infty}\right)\left|\frac{\zeta(s)}{s}\right|^{2}ds$,
which is at most
\begin{equation}\label{atmost}\frac{I(\sigma)}{\inf_{|\Im(s)|\geq T}|1-G(s)s|^2}\end{equation}
where $I(\sigma)$ and $G(s)$ are as in Proposition~\ref{prop:hippogryph} and $\Re(s) = \sigma$.

Proposition~\ref{prop:hippogryph} gives us a bound on
$I(\sigma)$, while 
\begin{equation}\label{eq:fight1}
  \inf_{|\Im(s)|\geq T}|1-G(s)s|
  \geq 1-\sup_{|\Im(s)|\geq T}\frac{|G(s)s(s+1)|}{T}\geq 1-\frac{1}{T}\int_{0}^{\infty}|g''(x)|x^{\sigma+1}dx,
\end{equation}
where the second inequality comes from applying \eqref{eq:mellinder} twice.
(Proceeding in this way seems natural, since we already estimated a quantity in terms of $g''$ in Proposition~\ref{prop:hippogryph}. It will later turn out later
that we are losing enough in this step to make the result we would obtain in
this section worse than the one we will get in \S \ref{subs:better}.)

From the conditions on $g$ in Proposition~\ref{prop:hippogryph}
we have $g''=0$ outside $[1-\delta,1+\delta]$ and $g''(1+x)=-g''(1-x)$ for $x\in[0,\delta]$. Since
$x\mapsto x^{\sigma+1}$ is convex in $x$ for $\sigma\geq 0$
and $(1+\delta)^{\sigma+1}+(1-\delta)^{\sigma+1}$ is increasing in $\sigma\geq 0$,
we see that $(1+x)^{\sigma+1} + (1-x)^{\sigma+1} \leq (1+\delta)^2 + (1-\delta)^2 =
2 + 2 \delta^2$,
and so
\begin{equation}\label{eq:fight2}
  1-\frac{1}{T}\int_{0}^{\infty}|g''(x)|x^{\sigma+1}dx \geq
1 - \frac{1+\delta^2}{T} |g''|_1.\end{equation}

We focus only on the main terms in the bound of $I(\sigma)$ given in Proposition~\ref{prop:hippogryph}. Introduce an auxiliary function $\eta:[0,\infty)\rightarrow\mathbb{R}$ defined so that $g(1+x)=\frac{1}{2}\eta\left(\frac{x}{\delta}\right),g(1-x)=1-\frac{1}{2}\eta\left(\frac{x}{\delta}\right)$.
  We then have $\beta=\frac{1}{4}|\eta|_{2}^{2}$, $|g''|_{1} =
  \frac{1}{\delta}|\eta''|_{1}$, and so
  $\alpha=\frac{1}{16}|\eta''|_{1}$. The main terms for $\delta$ small are
\begin{align*}
 & \frac{|\eta|_{2}^{2}\zeta(2\sigma)\delta}{2\left(1-\frac{1}{\delta T}|\eta''|_{1}\right)^{2}} \ \ \ \ \text{for $\frac{1}{2}<\sigma\leq 1$,} & & \frac{|\eta|_{2}^{2}\delta\log\left(\frac{1}{\delta}\right)}{2\left(1-\frac{1}{\delta T}|\eta''|_{1}\right)^{2}} \ \ \ \ \text{for $\sigma=\frac{1}{2}$,} \\
& \frac{\left(\frac{|\eta|_{2}^{2}}{2(1-2\sigma)}+\frac{1}{8\sigma}+\frac{|\eta''|_{1}}{16 (2\sigma+1)}+\frac{|\eta''|_{1}^{2}}{512 (\sigma+1)}\right)\delta^{2\sigma}}{\left(1-\frac{1}{\delta T}|\eta''|_{1}\right)^{2}} & & \text{for $0<\sigma<\frac{1}{2}$.}
\end{align*}
The term $\frac{1}{8\sigma}$ in the case $0<\sigma<\frac{1}{2}$ is not unexpected, as the integral of Theorem~\ref{thzeta} diverges at $\sigma=0$. We will choose $\delta$ so as to minimize the main terms above. For $\frac{1}{2}<\sigma\leq 1$, the minimum of $\frac{x}{(1-ax^{-1})^{2}}$ is at $x=3a$. Therefore we let $\delta=3|\eta''|_{1}T^{-1}$ so that the main term becomes
\begin{equation*}
\frac{3\zeta(2\sigma)}{2 T\left(1-\frac{1}{3}\right)^{2}} |\eta''|_{1}|\eta|_{2}^{2}.
\end{equation*}
For $\sigma=\frac{1}{2}$ we let $\delta = 3 |\eta''|_1T^{-1}$, out of simplicity. 
Then $\log\left(\frac{1}{\delta}\right) = \log T+\log\left(\frac{2}{3|\eta''|_{1}}\right)$, the term with $\log T$, which will be the main term in $T$, contributing
\begin{equation*}
\frac{3\log T}{2T\left(1-\frac{1}{3}\right)^{2}}|\eta''|_{1}|\eta|_{2}^{2}.
\end{equation*}
For $0<\sigma<\frac{1}{2}$, the minimum of $\frac{x^{2\sigma}}{(1-ax^{-1})^{2}}$
is reached at
$x=\left(1+\frac{1}{\sigma}\right)a$, so that we can
choose $\delta=\left(1+\frac{1}{\sigma}\right) |\eta''|_{1}T^{-1}$. The main term in this case is at most
\begin{equation}\label{eq:huantar}
  \frac{\left(1+\frac{1}{\sigma}\right)^{2\sigma}
  }{T^{2\sigma}\left(1-\frac{1}{1+\frac{1}{\sigma}}\right)^{2}}
  \left(\frac{|\eta''|_{1}^{2\sigma}|\eta|_{2}^{2}}{2(1-2\sigma)}+\frac{|\eta''|_{1}^{2\sigma}}{8\sigma}+\frac{|\eta''|_{1}^{2\sigma+1}}{16 (2\sigma+1)}+\frac{|\eta''|_{1}^{2\sigma+2}}{512 (\sigma+1)}\right).
\end{equation}

In all cases, we conclude that we have to select $\eta$ so that the factor $|\eta''|_{1}|\eta|_{2}^{2}$
(or, for $0<\sigma<\frac{1}{2}$, the first term in \eqref{eq:huantar}) is minimal.

\begin{lemma}\label{pr:optimal}
  Let $\eta:[0,\infty)\to\mathbb{R}$ be a decreasing continuous function, continuously differentiable outside a finite number of points, such that $\eta(0)=1$ and $\eta(x)=0$ for all $x\geq 1$. Then there exist $x_0\in(0,1]$ and a function
    $\eta_{x_0}:[0,\infty)\to\mathbb{R}$ of the form
      \[\eta_{x_0}(x) = \begin{cases} 1 - \frac{x}{x_0} &
        \text{for $0\leq x<x_0$,}\\
        0 & \text{for $x\geq x_0$,}\end{cases}\]
such that $|\eta_{x_0}''|_1\leq |\eta''|_1$ and $|\eta_{x_0}|_2\leq |\eta|_2$.
\end{lemma}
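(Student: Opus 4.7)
The plan is to reduce the lemma to the single inequality $V\cdot |\eta|_2^2 \geq \tfrac{1}{3}$, where $V := |\eta''|_1$. For any $\eta_0$ of the prescribed form, $\eta_0'$ equals $-1/x_0$ on $(0,x_0)$ and $0$ on $(x_0,\infty)$, so the distributional derivative $\eta_0''$ is a single jump at $x_0$ with $|\eta_0''|_1 = 1/x_0$; meanwhile $|\eta_0|_2^2 = \int_0^{x_0}(1-x/x_0)^2\,dx = x_0/3$. Hence the two desired inequalities $|\eta_0''|_1 \leq V$ and $|\eta_0|_2^2 \leq |\eta|_2^2$ translate to $x_0 \in [1/V,\,3|\eta|_2^2]$, and this interval is nonempty precisely when $V \cdot |\eta|_2^2 \geq \tfrac{1}{3}$.

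To prove this, I set $M := -\inf \eta' \geq 0$, where the infimum is taken over the points at which $\eta'$ is defined. Since $\eta'$ takes values arbitrarily close to $-M$ somewhere in $(0,1)$ and equals $0$ identically on $(1,\infty)$, the total variation of $\eta'$ over $(0,\infty)$ is at least $M$, i.e.\ $V \geq M$. In the other direction, $\int_0^1 (-\eta'(x))\,dx = \eta(0)-\eta(1) = 1$, and combined with $-\eta'(x) \leq M$ wherever $\eta'$ is defined, this forces $M \geq 1$.

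Because $\eta(0)=1$ and $\eta' \geq -M$ wherever defined, the fundamental theorem of calculus yields $\eta(x) \geq 1 - Mx$ on $[0,1/M]$ (noting $1/M \leq 1$). Hence
\[
|\eta|_2^2 \geq \int_0^{1/M} (1 - Mx)^2\, dx = \frac{1}{3M},
\]
and multiplying with $V \geq M$ gives $V \cdot |\eta|_2^2 \geq \tfrac{1}{3}$. Taking any $x_0 \in [1/V,\, 3|\eta|_2^2]$, for instance $x_0 = 1/V$, produces the desired $\eta_0$. The main content of the argument is this Poincar\'e-type estimate; the rest is bookkeeping, and the bound is sharp, with equality attained precisely by the ramps $\eta_0$ of the conclusion.
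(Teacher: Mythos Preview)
Your proof is correct and takes a genuinely different route from the paper's. The paper argues geometrically: it iteratively replaces concave arcs of $\eta$ by chords and convex arcs by pairs of tangent segments, checking at each step that neither $|\eta|_2$ nor the total variation of $\eta'$ increases, until $\eta$ has been reduced to a single ramp $\eta_0$. Your argument instead isolates the sharp scalar inequality $|\eta''|_1\cdot|\eta|_2^2\ge \tfrac13$ and proves it in three clean strokes via $M=-\inf\eta'$: total variation gives $|\eta''|_1\ge M$, the identity $\int_0^1(-\eta')=1$ gives $M\ge 1$, and the pointwise bound $\eta(x)\ge 1-Mx$ on $[0,1/M]$ gives $|\eta|_2^2\ge 1/(3M)$. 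This is shorter, makes the equality case transparent, and shows immediately that the product $|\eta''|_1|\eta|_2^2$ is what is really being minimised; the paper's reduction is more constructive but hides this inequality behind the iteration.

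One small point to tidy: your suggested choice $x_0=1/V$ degenerates when $V=|\eta''|_1=\infty$. In that case simply take any $x_0\in(0,\,3|\eta|_2^2]$ (the interval is nonempty since $|\eta|_2>0$ by continuity and $\eta(0)=1$), which the body of your argument already allows.
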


\begin{proof} 
If $|\eta''|_1=\infty$, we just take $\eta_{x_0}$ with $x_0>0$ sufficiently small so that $|\eta_{x_0}|_2\leq |\eta|_2$. Otherwise, suppose that $\eta'$ is of bounded variation; then one-sided limits of $\eta'$ always exist.

Since $|\eta'(0^{+})|<\infty$, it is clear that there is a $x_1>0$ such that
$\eta_{x_1}(t)\leq \eta(t)$ for all $t$ in some interval $[0,\delta]$,
$\delta>0$. Since $\eta$ is decreasing and $\eta(x)=0$, $\eta$ is non-negative
on $[0,1]$. Hence, for $x_2=\min(x_1,1/\delta)$, we know that
$\eta_{x_2}(x)\leq \eta(x)$ for all $x\in [0,1]$.
Let $x_{0}$ be the largest element of $[0,1]$
such that $\eta_{x_0}(x)\leq\eta(x)$ for all $x\in[0,1]$; clearly, $x_0\geq x_2>0$.
We readily see that $|\eta_{x_0}|_2\leq |\eta|_2$, so it is sufficient to prove that $|\eta_{x_0}''|_1\leq|\eta''|_1$. 

Clearly, $\eta_{x_0}'(0^+)\leq\eta'(0^+)$. Suppose first that $\eta'_{x_0}(0^{+})=\eta'(0^{+})$. By construction, we have $|\eta_{x_0}''|_1=\frac{1}{x_{0}}=|\eta'_{x_0}(0^{+})|$; furthermore, the total variation $|\eta''|_1$ of $\eta'$ is at least $|\eta'(0^{+})-\eta'(2)|$, which is equal to $|\eta'(0^{+})|$, since $\eta'(2)=0$. Therefore $|\eta_{x_0}''|_1=|\eta'_{x_0}(0^{+})|=|\eta(0^{+})|\leq|\eta''|_1$. Now suppose instead that $\eta'_{x_0}(0^{+})<\eta'(0^{+})$. Let $c\in(0,x_{0}]$ such that $\eta_{x_0}(c)=\eta(c)$, which must exist by definition of $x_{0}$: since $\eta$ is continuous and $\eta_{x_0}(x)\leq\eta(x)$ for all $x\in(0,c)$, there must be some $c'\in(0,c)$ with $\eta'(c')\leq\eta'_{x_{0}}(c')=-\frac{1}{x_{0}}$, and as before we have $|\eta''|_1\geq|\eta'(c')-\eta'(2)|\geq\frac{1}{x_{0}}$, concluding the proof.
\end{proof}

Thanks to Lemma \ref{pr:optimal}, we can assume that $\eta(x)$ is simply
the function given by $\eta(x)=1-x$ for $0\leq x\leq 1$, and by $\eta(x)=0$
for $x\geq 1$; the other functions $\eta_0$ described in the statement of
Lemma \ref{pr:optimal} are just dilations of this one, and can thus be covered
by the fact that we can choose $\delta$ as we wish. 

\begin{corollary}[to Proposition~\ref{prop:hippogryph}]\label{co:quitegood}
Let $0<\sigma\leq 1$, $T>\max\left\{3,1+\frac{1}{\sigma}\right\}$. Then
\[  \frac{1}{2\pi i}\!\left(\int_{\sigma-i\infty}^{\sigma-iT}\!\!\!\!+\int_{\sigma+iT}^{\sigma+i\infty}\right)\!\left|\frac{\zeta(s)}{s}\right|^{2}\!\!\!ds \leq
\rho_{\sigma,T} \cdot
\begin{cases} \frac{\zeta(2\sigma)}{2 T} +  
  \frac{c_{0}(3) - c_{1}(3)
  }{T^{2\sigma}}
& \!\text{if $\sigma>\frac{1}{2}$,}\\  
\frac{\log T}{2 T} + \frac{c_{0}(3) - c_{2}}{T}
& \!\text{if $\sigma=\frac{1}{2}$,}\\ \frac{c_{0}\left(1+\frac{1}{\sigma}\right) \ - \ c_{1}\left(1+\frac{1}{\sigma}\right)}{T^{2\sigma}} + 
  \frac{c_{3}}{T}
& \!\text{if $\sigma<\frac{1}{2}$,}\end{cases}\]
where
\begin{align*}
c_{0}(\kappa) & = \kappa^{2\sigma}\left(c' + \frac{\kappa}{6T(1-\frac{\kappa^2}{T^2})}\right),\ 
\ & \!\!\!\!c_{1}(\kappa) & = \frac{\kappa^{2\sigma}}{6 (2\sigma-1)},\\
c_{2} & = \frac{\log 3- \gamma}{2},
 \ \  & c_{3} & = \frac{(\sigma+1) \zeta(2\sigma)}{6\sigma},  
\\
c' & = \frac{1}{8\sigma} + \frac{1}{16(2\sigma+1)} + \frac{1}{512(\sigma+1)}, \ \  & \rho_{\sigma,T} & = 
  \begin{cases} \frac{9}{
  4 \left(1 - \frac{9}{2T^2}\right)^{2}} &\!\text{if $\sigma\geq\frac{1}{2}$,}\\
   \frac{(1+\sigma)^2}{\left(1 - \frac{(1+\sigma)^2}{\sigma T^2}\right)^2}&\!\text{if $\sigma<\frac{1}{2}$.}\end{cases}
\end{align*}
\end{corollary}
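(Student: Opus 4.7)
The plan is to combine Proposition~\ref{prop:hippogryph} with the denominator bound \eqref{eq:fight1}--\eqref{eq:fight2}, applied to the extremal function identified in Lemma~\ref{pr:optimal}. Concretely, I would take $\eta(x)=1-x$ for $0\leq x\leq 1$ and $\eta(x)=0$ otherwise, which, via the correspondence $g(1+x)=\frac{1}{2}\eta(x/\delta)$, $g(1-x)=1-\frac{1}{2}\eta(x/\delta)$, amounts to choosing $g$ to be the piecewise-linear function equal to $1$ on $[0,1-\delta]$, dropping linearly to $0$ on $[1-\delta,1+\delta]$, and vanishing thereafter. A direct computation then gives
\[
|\eta|_2^2 = \tfrac{1}{3}, \qquad |\eta''|_1 = 1, \qquad \beta = \tfrac{1}{4}|\eta|_2^2 = \tfrac{1}{12}, \qquad |g''|_1 = \tfrac{1}{\delta}, \qquad \alpha = \tfrac{\delta}{16}|g''|_1 = \tfrac{1}{16}.
\]
With these values the constant $c(\sigma,\alpha)$ appearing in Proposition~\ref{prop:hippogryph} simplifies to exactly the $c'$ of the statement.

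Next, plugging $\beta=\tfrac{1}{12}$ into \eqref{eq:dusino}, Proposition~\ref{prop:hippogryph} yields
\[
I(\sigma) \leq c' \delta^{2\sigma} + \tfrac{\delta}{6}\cdot
\begin{cases} \zeta(2\sigma) - \dfrac{\delta^{2\sigma-1}}{2\sigma-1} + \dfrac{\delta^{2\sigma}}{2(1-\delta^{2})} & \text{if } \sigma\neq\tfrac{1}{2},\\[4pt]
\log(1/\delta) + \gamma + \dfrac{\delta}{2(1-\delta^{2})} & \text{if } \sigma=\tfrac{1}{2}.\end{cases}
\]
On the other hand, using \eqref{atmost}, \eqref{eq:fight1} and \eqref{eq:fight2}, and the fact that $|g''|_1=1/\delta$, the denominator is bounded by
\[
\inf_{|\Im(s)|\geq T}|1-G(s)s|^{2} \geq \Big(1-\tfrac{1+\delta^{2}}{T\delta}\Big)^{2}.
\]

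Now I would make the explicit choices of $\delta$ suggested by the optimization discussion: $\delta=\kappa/T$ with $\kappa=3$ for $\sigma\geq\tfrac{1}{2}$, and $\kappa=1+\tfrac{1}{\sigma}$ for $0<\sigma<\tfrac{1}{2}$ (the hypothesis $T>\max\{3,1+1/\sigma\}$ ensures $\delta<1$ and keeps the denominator positive). A short calculation gives
\[
1-\tfrac{1+\delta^{2}}{T\delta} = 1 - \tfrac{1}{\kappa} - \tfrac{\kappa}{T^{2}} = \begin{cases} \tfrac{2}{3}\!\left(1-\tfrac{9}{2T^{2}}\right) & (\kappa=3),\\[2pt] \tfrac{1}{1+\sigma}\!\left(1-\tfrac{(1+\sigma)^{2}}{\sigma T^{2}}\right) & (\kappa=1+1/\sigma),\end{cases}
\]
so the reciprocal square of this quantity is exactly $\rho_{\sigma,T}$ in each case. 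Grouping the terms in $I(\sigma)$ by powers of $T$ after substituting $\delta=\kappa/T$ gives
\[
c'\delta^{2\sigma} + \tfrac{\delta^{2\sigma+1}}{12(1-\delta^{2})} = \tfrac{c_{0}(\kappa)}{T^{2\sigma}}, \qquad -\tfrac{\delta\cdot\delta^{2\sigma-1}}{6(2\sigma-1)} = -\tfrac{c_{1}(\kappa)}{T^{2\sigma}}, \qquad \tfrac{\delta\zeta(2\sigma)}{6} = \tfrac{\kappa\zeta(2\sigma)}{6T},
\]
and, at $\sigma=\tfrac{1}{2}$, $\tfrac{\delta}{6}\log(1/\delta) = \tfrac{\log T}{2T} - \tfrac{\log 3}{2T}$ while $\tfrac{\gamma\delta}{6}=\tfrac{\gamma}{2T}$, combining into the $c_{2}$ correction. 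For $0<\sigma<\tfrac{1}{2}$ the identity $\tfrac{\kappa}{6}=\tfrac{\sigma+1}{6\sigma}$ yields the constant $c_{3}$. Multiplying by $\rho_{\sigma,T}$ produces the three claimed bounds.

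The only thing requiring actual care is the bookkeeping in the denominator step and the verification that the optimization thresholds $T>\max\{3,1+1/\sigma\}$ indeed guarantee $\delta<1$ and $1-(1+\delta^{2})/(T\delta)>0$; everything else is an arithmetic rearrangement of the bound of Proposition~\ref{prop:hippogryph}. There is no genuine analytic obstacle — the substantive work has already been done in that proposition, and the corollary is the specialization with the extremal $\eta$ provided by Lemma~\ref{pr:optimal}.
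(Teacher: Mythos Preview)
Your proof is correct and follows essentially the same approach as the paper: you choose the piecewise-linear $g$ corresponding to the extremal $\eta$ from Lemma~\ref{pr:optimal}, compute $\alpha=\tfrac{1}{16}$, $\beta=\tfrac{1}{12}$, set $\delta=\kappa/T$ with $\kappa=3$ (for $\sigma\geq\tfrac{1}{2}$) or $\kappa=1+\tfrac{1}{\sigma}$ (for $\sigma<\tfrac{1}{2}$), bound the denominator via \eqref{eq:fight1}--\eqref{eq:fight2} to recover $\rho_{\sigma,T}$, and then regroup the terms of \eqref{eq:dusino} into the constants $c_0,c_1,c_2,c_3$. The arithmetic in your regrouping is accurate, and your observation that the hypothesis $T>\max\{3,1+1/\sigma\}$ guarantees $\delta<1$ and the positivity of the denominator is exactly what the paper also relies on.
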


Notice that $c_{0}(\kappa)$ and $c_3$ go to $\infty$ when $\sigma\to 0$. Observe also that the numerical optimization in \S\ref{subs:better}, on which Theorem~\ref{thzeta} is based, yields results that are asymptotically stronger than the ones above only for $\sigma\geq\frac{1}{2}$: the main coefficient of Corollary \ref{co:quitegood} turns out to be better when $\sigma>0$ is close to $0$, starting from around $\sigma=0.044$, although not yet reaching the asymptotically correct value proved later in Theorem~\ref{th:z/s-all}.

\begin{proof}
  As per the discussion above, we let
  \[g(t)= \begin{cases}1 & \text{for $0<t\leq 1 - \delta$,}\\
   \frac{1}{2} - \frac{t-1}{2 \delta} & \text{for $1-\delta\leq t\leq 1+\delta$,}\\
0 & \text{ for $t> 1 + \delta$.}\end{cases}\]
It is clear that $|g''|_1 = \frac{1}{\delta}$; hence,
$\alpha = \frac{1}{16}$ and $\beta = \frac{1}{12}$, for $\alpha$ and $\beta$ as in
  the statement of Proposition~\ref{prop:hippogryph}.
  We let $\delta = \frac{3}{T}$ if $\frac{1}{2}\leq \sigma\leq 1$ and
  $\delta = \frac{1+\sigma^{-1}}{T}$ if $0<\sigma<\frac{1}{2}$.
We bound $\inf_{|\Im(s)|\geq T}|1-G(s)s|$ from below 
by \eqref{eq:fight1} and \eqref{eq:fight2}. Then apply
Proposition~\ref{prop:hippogryph}.
\end{proof}

We will not use Corollary \ref{co:quitegood} in our main results.

\subsection{A better choice of $g$ for $\Re(s)\in\left[\frac{1}{2},1\right]$}\label{subs:better}

The choice of $g$ in \S \ref{subs:optim} is optimal only once we commit
ourselves to bounding $|1-G(s) s|$ as in \eqref{eq:fight1}. Alternatively,
we can choose $g$ from a class of functions whose Mellin transforms
$G(s)$ we can compute explicitly. We can then optimize $g$ within that class.
Consider, for instance,
$g:[0,\infty)\rightarrow\mathbb{R}$ such that $g$ is given by a polynomial
  in the interval $\lbrack 1-\delta,1+\delta\rbrack$, where the transition
  from $1$ to $0$ occurs.
  So that the conditions in Proposition~\ref{prop:hippogryph} are fulfilled,
  we ask for $g$ with $g(x)=1$ for $x<1-\delta$, $g(x)=0$ for $x>1+\delta$, and
\begin{equation}\label{eq:ggeneral}
g(x)=
\frac{1}{2}+\sum_{k=0}^{n}a_{k}\frac{(1+\delta-x)^{k}(1-x)(1-\delta-x)^{k}}{\delta^{2k+1}}\;\;\;\;\; \mathrm{ if \ }1-\delta\leq x\leq 1+\delta
\end{equation}
for some appropriate parameters $n$, $\delta$ and a sequence $\{a_{k}\}_{k=0}^{n}$. This choice in turn will allow us to give the Mellin transform of such $g$ explicitly, according to Lemma~\ref{le:mellinpoly}.

\begin{lemma}\label{le:gisgood}
Let $g:[0,\infty)\rightarrow\mathbb{R}$ be a function of the form \eqref{eq:ggeneral}. Suppose that \textbf{(a)} $a_{0}=\frac{1}{2}$ and $a_{1}=-\frac{1}{4}$,
\textbf{(b)} for every $0\leq k\leq n$ the coefficient $a_{k}$ has sign $(-1)^{k}$,
\textbf{(c)} for every $0\leq k<n$ we have $|a_{k+1}|\leq\frac{2k+1}{2k+2}|a_{k}|$.
Then $g$ is continuously differentiable on $(0,\infty)$ and $0\leq g\leq 1$ everywhere.
\end{lemma}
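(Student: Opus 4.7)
I would change variables $t = (1-x)/\delta$, which maps the transition interval $[1-\delta,1+\delta]$ onto $[-1,1]$. Then $(1+\delta-x)(1-\delta-x) = (1-x)^2 - \delta^2 = \delta^2(t^2-1)$ and $(1-x) = \delta t$, so on this interval \eqref{eq:ggeneral} collapses to
\[
g(x) = \tfrac{1}{2} + h(t), \qquad h(t) := t\sum_{k=0}^{n} a_k (t^2-1)^k,
\]
and $h$ is visibly an odd function of $t$.

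Continuity and $C^1$ matching at the two junctions ($x = 1\mp\delta$, i.e.\ $t = \pm 1$) reduce to an inspection of $h$ and $h'$ at $t=\pm 1$. There $(t^2-1)^k$ vanishes for every $k\geq 1$, leaving only the $k=0$ contribution, so $h(\pm 1) = \pm a_0 = \pm\tfrac12$ and hence $g(1-\delta) = 1$, $g(1+\delta) = 0$, matching the constant pieces. Differentiating, $\tfrac{d}{dt}[t(t^2-1)^k] = (t^2-1)^{k-1}[(2k+1)t^2 - 1]$ for $k\geq 1$; at $t=\pm 1$ this vanishes for $k\geq 2$ and equals $2$ for $k=1$, while the $k=0$ term contributes $1$. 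Thus $h'(\pm 1) = a_0 + 2a_1 = \tfrac{1}{2} - \tfrac{1}{2} = 0$ by condition (a), matching the vanishing derivative of the constant pieces. So $g \in C^1(0,\infty)$.

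For $0\leq g \leq 1$ it suffices, by oddness of $h$, to prove $0\leq h(t)\leq \tfrac12$ on $[0,1]$. Condition (b) rewrites $a_k(t^2-1)^k = |a_k|(1-t^2)^k \geq 0$ on $[0,1]$, so $h(t) = t\sum_k |a_k|(1-t^2)^k \geq 0$. Iterating condition (c) starting from $|a_0| = \tfrac{1}{2}$ produces
\[
|a_k| \leq \tfrac{1}{2}\prod_{j=1}^{k}\frac{2j-1}{2j} = \tfrac{1}{2}\binom{2k}{k}4^{-k},
\]
which are exactly the Taylor coefficients of $\tfrac{1}{2}(1-y)^{-1/2}$. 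Since every term is non-negative, the partial sum is bounded by the full series, so for $0 < t \leq 1$,
\[
h(t) \leq \frac{t}{2}\sum_{k=0}^{n}\binom{2k}{k}\frac{(1-t^2)^k}{4^k} \leq \frac{t}{2}\cdot\frac{1}{\sqrt{1-(1-t^2)}} = \tfrac{1}{2},
\]
while $h(0) = 0$ trivially. The one non-routine step — and the main obstacle — is recognizing that condition (c) combined with $a_0 = \tfrac12$ pins $|a_k|$ below precisely the Taylor coefficients of $\tfrac{1}{2}(1-y)^{-1/2}$, so that evaluating the generating function at $y = 1-t^2$ telescopes the bound to the clean value $\tfrac12$.
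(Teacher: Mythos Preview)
Your proof is correct, and it takes a genuinely different route from the paper's. The paper proves $0\le g\le 1$ by showing $g'(x)\le 0$ throughout $[1-\delta,1+\delta]$: writing $\varepsilon=1-x$, it expands
\[
g'(x)=-\sum_{k=1}^{n-1}\frac{(\varepsilon^2-\delta^2)^k}{\delta^{2k+1}}\bigl((2k+1)a_k+(2k+2)a_{k+1}\bigr)-a_n\frac{(\varepsilon^2-\delta^2)^n}{\delta^{2n+1}}(2n+1),
\]
and then uses (b) and (c) to check that each summand is non-positive (pairing $a_k$ with $a_{k+1}$). Since $g(1-\delta)=1$ and $g(1+\delta)=0$, monotonicity forces $0\le g\le 1$.

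You instead bound $h(t)$ directly. Your key observation --- that iterating (c) from $|a_0|=\tfrac12$ yields $|a_k|\le \tfrac12\binom{2k}{k}4^{-k}$, precisely the Taylor coefficients of $\tfrac12(1-y)^{-1/2}$ --- explains \emph{why} the ratio $\tfrac{2k+1}{2k+2}$ appears in (c): evaluating that generating function at $y=1-t^2$ produces exactly $\tfrac{1}{2t}$, so the leading factor $t$ in $h(t)$ cancels and the bound collapses to $\tfrac12$. This is more conceptual than the paper's term-by-term sign check, though the paper's argument is marginally more elementary in that it avoids the infinite series. Both the $C^1$ verification and the lower bound $h\ge 0$ are essentially the same in the two proofs (up to your cleaner change of variables).
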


\begin{proof}
  Each of the three pieces in which $g$ is split by \eqref{eq:ggeneral} is continuously differentiable, so we just have to check the property for the points
  $1-\delta$ and $1+\delta$. We have $g(1\pm\delta)=\frac{1}{2}\mp a_{0}$ and setting $a_{0}=\frac{1}{2}$ makes it so that $g(1-\delta)=1$, $g(1+\delta)=0$, implying the continuity of $g$. Supposing that $a_{0}=\frac{1}{2}$, we also obtain $\lim_{x\rightarrow\delta^{-}}g'(1\pm x)=-\frac{1}{2\delta}-\frac{2a_{1}}{\delta}$ and having $a_{1}=-\frac{1}{4}$ makes it so that this limit becomes $0$, thus giving us the continuity of the first derivative for $g$.

To prove that $0\leq g\leq 1$ in the interval $[1-\delta,1+\delta]$, it is sufficient to show that $g'(x)\leq 0$ in that interval. If we substitute $\varepsilon=1-x$, we have
\begin{align*}
 & \ g'(x)=- \frac{1}{2\delta}-\sum_{k=1}^{n}a_{k}\frac{(\varepsilon^{2}-\delta^{2})^{k-1}}{\delta^{2k+1}}((2k+1)(\varepsilon^{2}-\delta^{2})+2k\delta^{2}) \\
= & \ -\sum_{k=1}^{n-1}\frac{(\varepsilon^{2}-\delta^{2})^{k}}{\delta^{2k+1}}((2k+1)a_{k}+(2k+2)a_{k+1})-a_{n}\frac{(\varepsilon^{2}-\delta^{2})^{n}}{\delta^{2n+1}}(2n+1).
\end{align*}
Since we are working in $[1-\delta,1+\delta]$ we have $\varepsilon^{2}-\delta^{2}\leq 0$. To ensure that the product $a_{n}(\varepsilon^{2}-\delta^{2})^{n}$ in the last term is not negative, it is sufficient to ask for $a_{n}$ to have sign $(-1)^{n}$. We can now proceed backwards by induction on the terms in the sum. Indeed, supposing that $(-1)^{k+1}a_{k+1}\geq 0$, in order to have $(\varepsilon^{2}-\delta^{2})^{k}((2k+1)a_{k}+(2k+2)a_{k+1})\geq 0$ it is enough to ask that $(-1)^{k}a_{k}\geq 0$ and $(2k+1)|a_{k}|\geq (2k+2)|a_{k+1}|$.
\end{proof}

Computing the parameter $\beta$ in  Proposition~\ref{prop:hippogryph} is routine.
\begin{lemma}\label{le:gsquare}
Let $g:[0,\infty)\rightarrow\mathbb{R}$ be a function of the form \eqref{eq:ggeneral} such that $a_{0}=\frac{1}{2}$. Define $\beta=\frac{1}{\delta}\int_{1}^{1+\delta}|g(x)|^{2}dx$. Then
\begin{equation*}
\beta=\sum_{i=0}^{4n+2}\frac{(-1)^{i}}{i+1}\sum_{l=0}^{i}b_{n,l}b_{n,i-l},
\end{equation*}
where
\begin{equation*}
b_{n,j}=\sum_{k=0}^{n}2^{2k-j}\left(2\binom{k}{j-k-1}+\binom{k}{j-k}\right)a_{k}
\end{equation*}
for $1\leq j\leq 2n+1$ and $b_{n,j}=0$ for $j=0$ or $j>2n+1$.
\end{lemma}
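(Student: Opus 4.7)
The plan is to change variables so that $g$ becomes an explicit polynomial in a single variable on $[0,1]$, and then to integrate the square of that polynomial directly. Setting $y=\frac{1+\delta-x}{2\delta}$, one checks that
\[
(1+\delta-x)^{k}(1-x)(1-\delta-x)^{k}=2^{2k}\,\delta^{2k+1}\,y^{k}(2y-1)(y-1)^{k},
\]
so on $[1-\delta,1+\delta]$ we have $g(x)=\frac{1}{2}+(2y-1)\,h(y)$ with $h(y)=\sum_{k=0}^{n}a_{k}2^{2k}y^{k}(y-1)^{k}$. As $x$ runs from $1$ up to $1+\delta$, $y$ runs from $\frac{1}{2}$ down to $0$.

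Next, I would expand $h$ via the binomial identity $y^{k}(y-1)^{k}=\sum_{l=k}^{2k}\binom{k}{l-k}(-1)^{l}y^{l}$, multiply by $(2y-1)$, and collect the coefficient of $y^{j}$. After a shift $l\mapsto j-1$ in the contribution of $2y\,h(y)$ and $l\mapsto j$ in that of $-h(y)$, the coefficient of $y^{j}$ in $(2y-1)h(y)$, for $j\geq 1$, reduces to
\[
(-1)^{j-1}\sum_{k=0}^{n}a_{k}2^{2k}\left(2\binom{k}{j-k-1}+\binom{k}{j-k}\right)=-(-2)^{j}\,b_{n,j},
\]
with $b_{n,j}$ exactly as in the statement. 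The convention $b_{n,j}=0$ for $j=0$ is consistent because the constant term of $(2y-1)h(y)$, namely $-h(0)=-a_{0}=-\frac{1}{2}$, cancels the $+\frac{1}{2}$ in $g$; here we use the hypothesis $a_{0}=\frac{1}{2}$. Similarly $b_{n,j}=0$ for $j>2n+1$ since $h$ has degree $2n$. We thus obtain the clean identity $g(x)=-Q(-2y)$ with $Q(z):=\sum_{j=0}^{2n+1}b_{n,j}z^{j}$.

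Finally, changing variables to $y=\frac{1+\delta-x}{2\delta}$ (Jacobian $2\delta$) and then setting $u=2y$ gives
\[
\int_{1}^{1+\delta}|g(x)|^{2}\,dx=2\delta\int_{0}^{1/2}Q(-2y)^{2}\,dy=\delta\int_{0}^{1}Q(-u)^{2}\,du.
\]
Expanding $Q(-u)^{2}=\sum_{i=0}^{4n+2}(-1)^{i}\left(\sum_{l=0}^{i}b_{n,l}b_{n,i-l}\right)u^{i}$ and using $\int_{0}^{1}u^{i}\,du=\frac{1}{i+1}$ then produces the claimed formula for $\beta=\frac{1}{\delta}\int_{1}^{1+\delta}|g(x)|^{2}\,dx$. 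The only delicate step is the index bookkeeping in the coefficient extraction, and in checking that the boundary cases $j=0$ and $j=2n+1$ match the stated conventions for $b_{n,j}$; everything else is routine algebra.
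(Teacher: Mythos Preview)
Your proof is correct and follows essentially the same route as the paper's: a linear change of variable turns $g$ on $[1,1+\delta]$ into an explicit polynomial, the binomial expansion identifies its coefficients as the $b_{n,j}$, and squaring plus termwise integration yields the formula for $\beta$. The only cosmetic difference is that the paper substitutes $y=1+\delta-x$ (so the coefficients appear as $(-1)^{j+1}b_{n,j}/\delta^{j}$), whereas you rescale by $2\delta$ and factor out $(2y-1)$ first, which makes the cancellation of the constant term via $a_{0}=\tfrac{1}{2}$ slightly more transparent.
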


\begin{proof}
  We substitute $y=1+\delta-x$ inside the definition of $g(x)$. Then, for
  $1-\delta\leq x\leq 1+\delta$,
\begin{align*}
 & \ g(x)=\frac{1}{2}+\sum_{k=0}^{n}a_{k}\frac{y^{k}(y-\delta)(y-2\delta)^{k}}{\delta^{2k+1}} \\
= & \ \frac{1}{2}+\sum_{k=0}^{n}\sum_{i=0}^{k}\left((-1)^{k-i}\binom{k}{i}\frac{a_{k}2^{k-i}}{\delta^{k+i+1}}y^{k+i+1}-(-1)^{k-i}\binom{k}{i}\frac{a_{k}2^{k-i}}{\delta^{k+i}}y^{k+i}\right).
\end{align*}
Inside the sums, we substitute $j=k+i+1$ in the first term and $j=k+i$ in the second term, we shift one summation symbol outside, with the new index $j$, and we uniformize the range of each of the inner sums. We obtain
\begin{equation*}
g(x)=\frac{1}{2}+\sum_{j=0}^{2n+1}(-1)^{j+1}\frac{y^{j}}{\delta^{j}}\sum_{k=0}^{n}2^{2k-j}\left(2\binom{k}{j-k-1}+\binom{k}{j-k}\right)a_{k}.
\end{equation*}

For $1\leq j\leq 2n+1$, we just define $b_{n,j}$ to be as in the statement. For $j=0$, we include in the definition of $b_{n,0}$ the term $\frac{1}{2}$ that was outside the sums, so that $b_{n,0}=\frac{1}{2}-\sum_{k=0}^{n}2^{2k}\left(2\binom{k}{-k-1}+\binom{k}{-k}\right)a_{k}=\frac{1}{2}-a_{0}=0$. Therefore
\begin{equation}\label{eq:gxasy}
g(x)=\sum_{j=0}^{2n+1}(-1)^{j+1}b_{n,j}\frac{y^{j}}{\delta^{j}}.
\end{equation}
Imposing also $b_{n,j}=0$ for $j>2n+1$, we finally get
\begin{equation*}
\int_{1}^{1+\delta}|g(x)|^{2}dx =\sum_{i=0}^{4n+2}\left(\sum_{l=0}^{i}(-1)^{i}b_{n,l}b_{n,i-l}\right)\frac{\delta^{i+1}}{(i+1)\delta^{i}},
\end{equation*}
which gives $\beta$.
\end{proof}

In order to choose $\delta$ and $g$ optimally, we need to detect first what to minimize.

\begin{proposition}\label{pr:whattomin}
If $0<\sigma\leq 1$, then $\frac{1}{2\pi i}\left(\int_{\sigma-i\infty}^{\sigma-iT}+\int_{\sigma+iT}^{\sigma+i\infty}\right)\left|\frac{\zeta(s)}{s}\right|^{2}ds$ is bounded from above by quantities whose main terms are
\begin{align}
& \frac{2\zeta(2\sigma)r\sum_{i=0}^{4n+2}\frac{(-1)^{i}}{i+1}\sum_{l=0}^{i}b_{n,l}b_{n,i-l}}{\left(1-\sum_{j=1}^{2n+1}\frac{2j!|b_{n,j}|}{r^{j}}\right)^{2}}\cdot\frac{1}{T} & \text{if $\sigma>\frac{1}{2}$,} \nonumber\\
& \frac{2r\sum_{i=0}^{4n+2}\frac{(-1)^{i}}{i+1}\sum_{l=0}^{i}b_{n,l}b_{n,i-l}}{\left(1-\sum_{j=1}^{2n+1}\frac{2j!|b_{n,j}|}{r^{j}}\right)^{2}}\cdot\frac{\log T}{T} & \text{if $\sigma=\frac{1}{2}$,} \nonumber\\
& \label{mainterms}\frac{r^{2\sigma}\!\left(\frac{1}{8\sigma}\!+\!\frac{\alpha}{2\sigma+1}\!+\!\frac{\alpha^{2}}{2\sigma+2}\!+\!\frac{2}{1-2\sigma}\sum_{i=0}^{4n+2}\frac{(-1)^{i}}{i+1}\sum_{l=0}^{i}b_{n,l}b_{n,i-l}\right)}{\left(1-\sum_{j=1}^{2n+1}\frac{2j!|b_{n,j}|}{r^{j}}\right)^{2}}\cdot\frac{1}{T^{2\sigma}} & \text{if $\sigma<\frac{1}{2}$,}
\end{align}
where $g$ is any polynomial as in \eqref{eq:ggeneral}, for any choice of $(n,r,\{a_{k}\}_{k=0}^{n})$ such that $0<r\leq\frac{T}{2}$, $n\geq 1$, $\{a_{k}\}_{k=0}^{n}$ satisfies the conditions of Lemma~\ref{le:gisgood}, the $b_{n,j}$ are defined as in Lemma~\ref{le:gsquare}, $\alpha$ is defined as in Proposition~\ref{prop:hippogryph}, and the expression inside the square in the denominator is positive.
\end{proposition}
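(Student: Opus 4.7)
To prove Proposition~\ref{pr:whattomin}, the strategy is to apply Proposition~\ref{prop:hippogryph} to the function $g$ defined by \eqref{eq:ggeneral} (with parameters satisfying the hypotheses of Lemma~\ref{le:gisgood}), and to divide the resulting bound on $I(\sigma)$ by a lower bound on $\inf_{|\Im s|\geq T}|1-sG(s)|^2$, as allowed by the elementary inequality \eqref{atmost}.

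First I would verify the hypotheses of Proposition~\ref{prop:hippogryph}: (b)-(d) are immediate from \eqref{eq:ggeneral} and Lemma~\ref{le:gisgood}, while (a) reduces via (d) to $\int_0^\infty g\,dx = (1-\delta) + \int_0^\delta[g(1-t)+g(1+t)]\,dt = 1$. Setting $\delta = r/T$ (where $r\leq T/2$ enforces $\delta\leq\frac{1}{2}$) and writing $\beta$ explicitly via Lemma~\ref{le:gsquare}, Proposition~\ref{prop:hippogryph} yields an upper bound on $I(\sigma)$ whose leading behavior in $T$ is $2\beta\delta\zeta(2\sigma) = 2\beta r\zeta(2\sigma)/T$ for $\sigma>\frac{1}{2}$, $2\beta\delta\log(1/\delta)$ with dominant part $2\beta r\log(T)/T$ for $\sigma=\frac{1}{2}$, and $c(\sigma,\alpha)\delta^{2\sigma}+\frac{2\beta\delta^{2\sigma}}{1-2\sigma}$ for $\sigma<\frac{1}{2}$; these match the numerators of \eqref{mainterms}.

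The core task is bounding $\inf_{|\Im s|\geq T}|1-sG(s)|$ from below. From the symmetry condition (d), one integration by parts yields $sG(s) = -\int_{1-\delta}^{1+\delta}g'(x)x^s\,dx$, where the boundary terms vanish thanks to $g(1-\delta)=1$, $g(1+\delta)=0$, and $g'(1\pm\delta)=0$. Exploiting $g'(1-t)=g'(1+t)$ and the explicit form \eqref{eq:gxasy} (under which $b_{n,0}=b_{n,1}=0$, so the sum effectively starts at $j=2$), one rewrites this as
\[sG(s) = \sum_{j=2}^{2n+1}(-1)^{j+1}\frac{jb_{n,j}}{\delta^j}\int_0^\delta(\delta-t)^{j-1}[(1+t)^s+(1-t)^s]\,dt.\]
Repeated integration by parts on each integral produces denominators $(s+1)(s+2)\cdots(s+j)$, bounded from below by $T^j$ on the region $|\Im s|\geq T$, $\sigma\geq 0$; collecting the boundary contributions (coming from $(1+t)^{s+k}+(1-t)^{s+k}=2$ at $t=0$ in odd-indexed IBPs) and using elementary size estimates gives, to leading order, $|1-sG(s)|\geq 1-\sum_{j=1}^{2n+1}\frac{2j!|b_{n,j}|}{r^j}$. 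Substituting all this into \eqref{atmost} yields the main terms \eqref{mainterms}. The hardest part is tracking the assembly of boundary contributions in the repeated IBPs so that the coefficient in front of each $|b_{n,j}|/r^j$ is exactly $2j!$, and verifying that the remaining polynomial-in-$\delta$ factors (such as $(1+\delta)^{\sigma+j}$) are absorbed in lower-order corrections compatible with the "main terms" claim.
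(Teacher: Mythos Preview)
Your proposal is correct and follows the same overall strategy as the paper: bound the tail integral via \eqref{atmost}, apply Proposition~\ref{prop:hippogryph} with $\delta=r/T$ for the numerator, and bound the denominator by estimating $|sG(s)|$ on $|\Im s|\geq T$. The only real difference is in how $G(s)$ is obtained. You reach $sG(s)=-\int_{1-\delta}^{1+\delta}g'(x)x^s\,dx$ by one integration by parts and then propose repeated IBP on $\int_0^\delta(\delta-t)^{j-1}[(1+t)^s+(1-t)^s]\,dt$, which works but requires exactly the boundary-term bookkeeping you flag as ``the hardest part''. The paper bypasses this entirely: it uses the global representation \eqref{eq:gxasyz}, which writes $g$ as a linear combination of functions $(a-x)^j\mathds{1}_{[0,a]}$ with $a=1\pm\delta$, and then applies Lemma~\ref{le:mellinpoly} term by term to get $G(s)$ in closed form,
\[
G(s)=\sum_{j=1}^{2n+1}\frac{j!\,b_{n,j}\bigl((-1)^{j+1}(1+\delta)^{s+j}-(1-\delta)^{s+j}\bigr)}{\delta^{j}s(s+1)\cdots(s+j)}.
\]
From here, $|sG(s)|\leq\sum_j\frac{j!|b_{n,j}|}{\delta^jT^j}\bigl((1+\delta)^{j+1}+(1-\delta)^{j+1}\bigr)$ for $|\Im s|\geq T$ and $0<\sigma\leq1$, whose leading part (the $i=0$ term of the binomial expansion) is $\sum_j 2j!|b_{n,j}|/r^j$. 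So the factor $2j!$ you were worried about tracking just drops out of Lemma~\ref{le:mellinpoly}, and the polynomial-in-$\delta$ corrections you mention are precisely the higher terms $\binom{j+1}{2i}\delta^{2i}$ in that expansion, set aside at the end of the proof as lower-order.
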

\begin{proof}
 Recall inequality \eqref{atmost}. By Lemma \ref{le:gisgood}, all the conditions are met so that we can derive a bound (depending on $\delta$) for its numerator $I(\sigma)$ as given in Proposition~\ref{prop:hippogryph}.

  Let us concentrate on its denominator. For $x\in [1-\delta,1+\delta]$, we write $g(x)$ as in \eqref{eq:gxasy},
  where $y=1+\delta-x$.   We proceed similarly for $z=1-\delta-x$. Observe that, since $g(x)=0$ for all $x>1+\delta$ and $g=1$ in $[0,1-\delta]$,
\begin{equation}\label{eq:gxasyz}
g=\sum_{j=0}^{2n+1}(-1)^{j+1}b_{n,j}\frac{y^{j}}{\delta^{j}}\mathds{1}_{[0,1+\delta]}-\sum_{j=1}^{2n+1}b_{n,j}\frac{z^{j}}{\delta^{j}}\mathds{1}_{[0,1-\delta]},
\end{equation}
where the $b_{n,j}$ are as in Lemma~\ref{le:gsquare}. Now, $g$ is written as linear combination of expressions as in \eqref{eq:mellinpoly} with $a=1\pm\delta$, and, by Lemma~\ref{le:mellinpoly}, its Mellin transform  is
\begin{equation*}
G(s)=\sum_{j=1}^{2n+1}\frac{j!b_{n,j}((-1)^{j+1}(1+\delta)^{s+j}-(1-\delta)^{s+j})}{\delta^{j}s(s+1)...(s+j)}.
\end{equation*}

Furthermore, we have $|s+1|,\ldots,|s+j|>|\Im(s)|^j$, and $\sigma+j\leq j+1$ implies that $|(1+\delta)^{s+j}\pm(1-\delta)^{s+j}|\leq(1+\delta)^{j+1}+(1-\delta)^{j+1}$, since the left hand side is an increasing function of $\sigma$. These two facts imply that
\begin{equation}\label{eq:Gfirstbound}
\inf_{|\Im(s)|\geq T}|1-G(s)s|\geq 1-\sum_{j=1}^{2n+1}\frac{j!|b_{n,j}|}{\delta^{j}T^{j}}\sum_{i=0}^{\left\lfloor\frac{j+1}{2}\right\rfloor}2\binom{j+1}{2i}\delta^{2i}.
\end{equation}

We want $\delta$ to be small, so 
as to keep the upper bound in \eqref{eq:dusino} small, but not too small, since
we want the expression on the right of \eqref{eq:Gfirstbound} to be positive.

The terms $\delta^{j}T^{j}$ in \eqref{eq:Gfirstbound} tell us that we cannot afford more than taking $\delta=\frac{r}{T}$, which we choose, for some $0<r\leq\frac{T}{2}$ large enough (depending only on $n$) to make the right hand side of \eqref{eq:Gfirstbound} positive. Therefore, all conditions requested in the above paragraph hold. Let $D_{\min}$ be the square of the expression on the right of \eqref{eq:Gfirstbound}, so that
$\inf_{|\Im(s)|\geq T}|1-G(s)s|^{2} \geq D_{\min}$.

Now, the substitution $\delta=\frac{r}{T}$ in the bounds \eqref{eq:dusino} makes evident that the obtained main terms, as $T\rightarrow\infty$, are of order $\frac{1}{T}$, $\frac{\log T}{T}$, $\frac{1}{T^{2\sigma}}$ for $\frac{1}{2}<\sigma\leq 1$, $\sigma=\frac{1}{2}$, $0<\sigma<\frac{1}{2}$, respectively. Moreover, thanks to the definitions of $\alpha$, $\beta$, implemented for a function $g$ of the form \eqref{eq:ggeneral}, it is the choice of $\{a_{k}\}_{k=0}^n$ and of $r$ that will determine the optimal constants in front of these main terms.

We derive the result once we put everything together and set aside the summands of order $\frac{1}{T^{2i}}$ that come from the inner sum defining $\sqrt{D_{\min}}$.
\end{proof}

\tiny
\begin{mysage}
n=6
A=[1/2,-1/4,3/16,-533639/10^7,81112/10^7,-7415/10^7,370/10^7]
def compute_coefficients(n,A): #computes the coefficients b_{n,j}
    B=[0]*(4*n+3)
    j=1
    while j<=2*n+1:
        k=0
        while k<=n:
            B[j]=B[j]+(2^(2*k-j))*(2*binomial(k,j-k-1)+binomial(k,j-k))*A[k]
            k=k+1
        j=j+1
    return B
def compute_beta(n,B): #computes beta
    beta=0
    j=0
    while j<=4*n+2:
        k=0
        while k<=j:
            beta=beta+(-1)^j/(j+1)*B[k]*B[j-k]
            k=k+1
        j=j+1
    return beta
B=compute_coefficients(n,A)
beta=compute_beta(n,B)
rvalue=QQ(5.28035)
def compute_sumexpr(n,B): #computes the expression inside Dmin and Dmax, in the variables r,T
    var('r,T')
    sumexpr=0
    j=1
    while j<=2*n+1:
        s=0
        i=0
        while i<=j+1:
            s=s+binomial(j+1,i)*r^(i)/T^(i)
            i=i+2
        sumexpr=sumexpr+2*factorial(j)*abs(B[j])/r^(j)*s
        j=j+1
    return sumexpr
def compute_denomin(n,B): #computes Dmin, in the variables r,T
    var('r,T')
    denomin=(1-compute_sumexpr(n,B))^2
    return denomin
def compute_denomax(n,B): #computes Dmax, in the variables r,T
    var('r,T')
    denomax=(1+compute_sumexpr(n,B))^2
    return denomax
Dmin=RIF(compute_denomin(n,B).substitute(r=rvalue,T=TH))
Dmax=RIF(compute_denomax(n,B).substitute(r=rvalue,T=TH))
def g_function(n,delta,A): #defines g
    var('x')
    g=1/2
    i=0
    while i<=n:
        g=g+A[i]*(1+delta-x)^(i)*(1-x)*(1-delta-x)^(i)/delta^(2*i+1)
        i=i+1
    return g
delta=RIF(rvalue/TH)
alpha=RIF(2*delta*abs(diff(g_function(n,delta,A),x,1).substitute(x=1))/16)
kpar=RIF(rvalue^2)
k111=RIF(1/(8*Dmin))
k112=RIF(alpha/Dmin)
k113=RIF(alpha^2/(2*Dmin))
k114=RIF(2*beta*(1-10^(-5)/kpar)/Dmax)
k12x=RIF(2*beta*rvalue/((1-rvalue^2/TH^2)*Dmin)) #
c21x=RIF(rvalue/(4*Dmin)+alpha*rvalue/(2*Dmin)+alpha^2*rvalue/(3*Dmin)+2*beta*g*rvalue/Dmin)
c22x=RIF(beta*rvalue^2/((1-rvalue^2/TH^2)*Dmin))
c30x=RIF(2*beta*rvalue/Dmax)
k314=RIF(2*beta/Dmin)
\end{mysage}
\normalsize

\begin{proofbold}{Theorem~\ref{thzeta}}
First we bound $\frac{1}{2\pi i}\left(\int_{\sigma-i\infty}^{\sigma-iT}+\int_{\sigma+iT}^{\sigma+i\infty}\right)\left|\frac{\zeta(s)}{s}\right|^{2}ds$ as in Proposition~\ref{pr:whattomin}. As aforementioned, it is the choice of $n$, $a_k$ ($0\leq k\leq n$) and $r$ that suffices to optimize those main terms in each case. For simplicity, we will carry out the optimization process and the corresponding choice of parameters according to \eqref{mainterms} only for $\sigma\geq\frac{1}{2}$, the same choice being used for the remaining cases. 

For $n=2,3$, we determine by computer all possibilities for coefficients of $g$ satisfying the conditions in Lemma~\ref{le:gisgood} with precision $10^{-n-1}$. We then proceed inductively for larger $n$; given an optimized $g=g_n$ for a certain $n$, a better $g=g_{n+1}$ with $n+1$ is found as follows: start with the set of coefficients provided by the original $g$, attaching $a_{n}=0$ as a new variable, and compute the first bound in \eqref{mainterms}, for any fixed $\frac{1}{2}<\sigma\leq 1$ (in fact, $\sigma$ does not participate in our analysis), by adding $\vec{x}$ to the tuple $\vec{a}=(a_{2},\ldots,a_{n})$ ($a_0$, $a_1$ being fixed) for every $\vec{x}\in(\{0,\pm 10^{-n-1}\})^{n-1}$ such the conditions of Lemma~\ref{le:gisgood} hold. We thus determine an optimal $\vec{x}$, call it $\vec{x}_*$, and compute the first bound in \eqref{mainterms} with $\vec{a}+j\vec{x}_*$, $j\geq 1$, as long as we encounter improvements, until we stop and consider the last tuple $\vec{a}_*=\vec{a}+j\vec{x}_*$, that produces an improvement on \eqref{mainterms} (meaning that $\vec{a}+(j+1)\vec{x}_*$ does not). We repeat the described process starting with $\vec{a}_*$ rather than $\vec{a}$ until we find an optimized set of coefficients $a_2,\ldots,a_n$ for which no increment $\vec{x}$ produces any improvement; this final $(a_{2},\ldots,a_{n})$ will define $g_{n+1}$.

By taking $n=\sage{n}$, our parameters are
\begin{align}
a_{0}= & \ \sage{A[0]}, & a_{1}= & \ \sage{A[1]}, & a_{3}= & \ \sage{A[3]}, & a_{5}= & \ \sage{A[5]}, \nonumber \\
\phantom{a_{0}=} & \phantom{\ \sage{A[0]},} & a_{2}= & \ \sage{A[2]}, & a_{4}= & \ \sage{A[4]}, & a_{6}= & \ \sage{A[6]}, \label{eq:gparam}
\end{align}
$r=\sage{roundup(rvalue,5)}$ and $T\geq T_{0}=\sage{TH}$.

Consider $D_{\min}$ and let
\begin{equation*}
  D_{\max} = \left(1+\sum_{j=1}^{2n+1}\frac{2j!|b_{n,j}|}{r^{j}}\left(1+\binom{j+1}{2}\frac{r^{2}}{T^{2}}+...\right)\right)^{2},
\end{equation*}
so that, recalling again Proposition \ref{pr:whattomin}, $\sup_{|\Im(s)|\geq T}|1-G(s)s|^{2}\leq D_{\max}$. 

Given the choice in \eqref{eq:gparam}, we have
\begin{align*}
\alpha= & \ \sage{rounddown(alpha.upper(),5)}..., & \beta= & \ \sage{beta}, \\
D_{\min}> & \ \sage{rounddown(Dmin.lower(),5)}, & D_{\max}< & \ \sage{roundup(Dmax.upper(),5)}.
\end{align*}
Hence, the coefficient of the leading term $\frac{1}{T}$ in the case of $\frac{1}{2}<\sigma\leq 1$ becomes
\begin{equation}\label{eq:3/5}
\frac{2\beta\zeta(2\sigma)r}{D_{\min}}, \ \ \ \ \text{with} \ \ \ \ \frac{2\beta r}{D_{\min}}<\sage{roundup(RIF(2*beta*rvalue/Dmin).upper(),5)}\leq\frac{3}{5},
\end{equation}
and the coefficients of the smaller terms $\frac{1}{T^{2\sigma}},\frac{1}{T^{2\sigma+1}}$ are bounded as follows
\begin{align*}
\kappa:= & \ \sage{roundup(kpar.upper(),5)}\in r^2+[0,10^{-5}], \;\;\;\;
c_{111}:= \kappa^{\sigma}\kappa_{111}:=\kappa^{\sigma}\cdot \sage{roundup(k111.upper(),5)}>\frac{r^{2\sigma}}{8D_{\min}}, \\
c_{112}:= & \ \kappa^{\sigma}\kappa_{112}:=\kappa^{\sigma}\cdot \sage{roundup(k112.upper(),5)}>\frac{\alpha r^{2\sigma}}{D_{\min}}, \\
c_{113}:= & \kappa^{\sigma}\kappa_{113}:=\kappa^{\sigma}\cdot \sage{roundup(k113.upper(),5)}>\frac{\alpha^{2}r^{2\sigma}}{2D_{\min}}, \\
c_{114}:= & \ \kappa^{\sigma}\kappa_{114}:=\kappa^{\sigma}\cdot \sage{rounddown(k114.lower(),5)}<\kappa^{\sigma}\cdot\frac{2\beta\left(1-\frac{10^{-5}}{\kappa}\right)}{D_{\max}}\leq\frac{2\beta r^{2\sigma}}{D_{\max}},\\
c_{12*}:= &\kappa^{\sigma}\kappa_{12*}:=\kappa^{\sigma}\cdot\sage{roundup(k12x.upper(),5)}>\frac{2\beta r^{2\sigma+1}}{\left(1-\frac{r^{2}}{T^{2}}\right)D_{\min}}, 
\end{align*} 
where the numbers $c_{ijk}$ are the ones given in the statement.

In the case of $\sigma=\frac{1}{2}$, the coefficient of the leading term $\frac{\log T}{T}$ is, as in \eqref{eq:3/5}, bounded by $\frac{3}{5}$, while the lower order terms $\frac{1}{T},\frac{1}{T^{2}}$ have their coefficients bounded as follows
\begin{align*}
c_{21*} & :=\sage{roundup(c21x.upper(),5)}>\frac{r}{D_{\min}}\left(2\beta\gamma+\frac{1}{4}+\frac{\alpha}{2}+\frac{\alpha^{2}}{3}\right), \\
c_{22*} & :=\sage{roundup(c22x.upper(),5)}>\frac{\beta r^{2}}{\left(1-\frac{r^{2}}{T^{2}}\right)D_{\min}}.
\end{align*}
Finally, in the case of $0<\sigma<\frac{1}{2}$, the coefficients are bounded in the same way as in the case of $\frac{1}{2}<\sigma\leq 1$, with the exception of
\begin{equation*}
c_{314}:=\kappa^{\sigma}\kappa_{314}:=\kappa^{\sigma}\cdot\sage{roundup(k314.upper(),5)}>\frac{2\beta r^{2\sigma}}{D_{\min}}, \ \ \ \ \ \ c_{30*}:=\sage{rounddown(c30x.lower(),5)}<\frac{2\beta r}{D_{\max}}.
\end{equation*}
\end{proofbold}

{\bf Remarks.} 
The coefficient $\frac{3}{5}=0.6$ appearing in the case $\frac{1}{2}\leq\sigma\leq 1$ is an artificial threshold that the authors have set, $n=\sage{n}$ being the smallest value for which it could be reached for some choice of parameters $a_{k}$. These parameters, together with $r$ and $T_{0}$, were then determined by our choice of threshold and $n$ through computer calculations, as already described during the proof.

The chosen threshold could have been improved by choosing a larger $n$ than $n=\sage{n}$, albeit very slightly; computer investigations up to $n=9$ did not manage to give less than $0.596$. Nevertheless, the correct value in that case, as given in Theorem~\ref{thm:main} and suggested for example by the asymptotics in Theorems~7.2 and 7.3 in \cite{Tit86b}, should have been $\frac{1}{\pi}=\sage{rounddown(1/pi,5)}...$ .

In \S\ref{Opt} we obtain such a coefficient. However, for small values of $T$, the estimations in Theorem~\ref{thzeta}
coming from our work in this section 
are better, whence its importance.

\section{Second approach: Euler-Maclaurin and a standard mean value theorem}\label{Opt}

Rather than working directly with $\zeta$ as in \S\ref{Exp}, we work with its $L^2$ mean through a finite truncation, as given in Lemma \ref{le:353}. We will thus obtain not only bounds of the integral of $t\mapsto\left|\frac{\zeta{(\sigma+it)}}{\sigma+it}\right|^2$ on the tails but also mean square asymptotic expressions for $\zeta$.

\subsection{General bounds}\label{general} 

We start by providing bounds for the integral of $|\zeta(s)|^{2}$ with general extrema. We follow two similar paths, according to whether in Lemma~\ref{le:353} the index $X$ of the sum is chosen to be a constant (as in Proposition~\ref{prop:z-all}) or dependent on $t$ (as in Proposition~\ref{prop:z-allb}): the two choices are advantageous in different situations, as observed in the next subsections.

\begin{proposition}\label{prop:z-all}
Let $\frac{1}{2}\leq\sigma\leq 1$ and $T_{1},T_{2}$ be real numbers such that $1\leq T_{1}\leq T_{2}$. 
Then, for any $\rho>0$,
$\int_{T_{1}}^{T_{2}}|\zeta(\sigma+it)|^{2}dt$ is at most
\begin{equation}\label{eq:z-all}\begin{aligned}
  &(1 + \rho) \left(\left(T_{2}-T_{1}+\frac{E}{2}\right) f_{1,1}^{+}(\sigma,T_2)
  + Ef_{1,2}^{+}(\sigma,T_2)\right) 
  \\ 
  + &\left(1 + \frac{1}{\rho}\right)
  \left(T_2^{2-2\sigma}\left(\frac{1}{T_{1}}-\frac{1}{T_{2}}\right)+\frac{D^2(T_2-T_1)}{T_2^{2\sigma}} 
  +2DT_2^{1-2\sigma}\log\left(\frac{T_{2}}{T_{1}}\right)\right)\end{aligned}\end{equation}
where
\begin{align*}
f_{1,1}^{+}(\sigma,T) = & \begin{cases} \log T+ \gamma + \frac{1}{2 T}
  & \text{if $\sigma=\frac{1}{2}$,}\\
  \zeta(2\sigma)-\frac{1}{(2\sigma-1)T^{2\sigma-1}}+\frac{1}{T^{2\sigma}}  
& \text{if $\frac{1}{2} < \sigma \leq 1$,}\end{cases} \\
f_{1,2}^{+}(\sigma,T) = & \begin{cases}
  \frac{T^{2-2 \sigma}}{2(1-\sigma)} +\frac{1}{2}
  & \text{if $\frac{1}{2} \leq \sigma < 1$,}\\
  \log T+\gamma+\frac{1}{2T} & \text{if $\sigma=1$,}\end{cases} 
\end{align*}
and the constants $D$ and $E$ are as in Lemma~\ref{le:353}, with $C=\lfloor T_{2}\rfloor$, and as in Proposition~\ref{pr:brudern}, respectively. Moreover, for any $-1<\rho<0$, $\int_{T_{1}}^{T_{2}}|\zeta(\sigma+it)|^{2}dt$ is bounded from below by the expression in \eqref{eq:z-all} where $f_{1,1}^{+},f_{1,2}^{+}$ are replaced respectively by
\begin{align*}
f_{1,1}^{-}(\sigma,T) = & \begin{cases} \log T+ \gamma - \frac{c}{T}
  & \text{if $\sigma=\frac{1}{2}$,}\\
  \zeta(2\sigma)-\frac{1}{(2\sigma-1)T^{2\sigma-1}}-\frac{1}{2T^{2\sigma}} 
& \text{if $\frac{1}{2} < \sigma \leq 1$,}\end{cases} \\
f_{1,2}^{-}(\sigma,T) = & \begin{cases}
  \frac{T^{2-2 \sigma}}{2 (1-\sigma)}+\zeta(2\sigma-1)-\frac{1}{2T^{2\sigma-1}}
  & \text{if $\frac{1}{2} \leq \sigma < 1$,}\\
  \log T+\gamma-\frac{c}{T} & \text{if $\sigma=1$,}\end{cases}
\end{align*}
where $c$ is as in Lemma~\ref{le:harmonic}.
\end{proposition}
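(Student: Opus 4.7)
The approach is to decompose $\zeta(s)$ via Lemma~\ref{le:353} with the constant choice $X=T_2$ (valid since $|t|\leq T_2$ on the range of integration, and $C=\lfloor T_2\rfloor\geq 1$ is admissible as $T_2\geq 1$), yielding $\zeta(\sigma+it)=A(t)+B(t)$ with $A(t)=\sum_{n\leq T_2}n^{-\sigma-it}$ a Dirichlet polynomial and $B(t)=T_2^{1-s}/(s-1)+O^{*}(D/T_2^{\sigma})$ the ``pole plus remainder'' piece. Lemma~\ref{lem:handy} then converts $|A+B|^2$ into a combination of $|A|^2$ and $|B|^2$: directly with $\rho>0$ for the upper bound, and via $|A+B|^2\geq(|A|-|B|)^2$ together with the second inequality of Lemma~\ref{lem:handy} for the lower bound, which written with $\rho'=-\rho\in(-1,0)$ gives exactly the prefactors $1+\rho'$ and $1+1/\rho'$ in the statement. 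After integrating on $[T_1,T_2]$ the problem splits into estimating $\int|A|^2$ and $\int|B|^2$ separately.

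For the Dirichlet-polynomial integral, I would translate the interval, using $\int_{T_1}^{T_2}|A(t)|^2dt=\int_0^{T_2-T_1}|A(t+T_1)|^2dt$ and noting that $A(t+T_1)=\sum_{n\leq T_2}(n^{-\sigma-iT_1})n^{-it}$ has coefficients of the same moduli as those of $A$. Proposition~\ref{pr:brudern} then yields
\[\int_{T_1}^{T_2}|A|^2\,dt \;=\; \Big(T_2-T_1+\tfrac{E}{2}\Big)\sum_{n\leq T_2}n^{-2\sigma}\;+\;O^{*}\Big(E\sum_{n\leq T_2}n^{1-2\sigma}\Big).\]
The sum $\sum n^{-2\sigma}$ is then estimated through Lemma~\ref{recip} (for $\sigma\neq\tfrac12$) or Lemma~\ref{le:harmonic} (for $\sigma=\tfrac12$), producing $f_{1,1}^{\pm}$; and similarly $\sum n^{1-2\sigma}$ produces $f_{1,2}^{\pm}$ via Lemma~\ref{recip} for $\sigma<1$ and Lemma~\ref{le:harmonic} for $\sigma=1$. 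One also checks by hand that in the range $\sigma\in(\tfrac12,1)$ the cleaner expression $T^{2-2\sigma}/(2(1-\sigma))$ really is an upper bound for $\sum n^{1-2\sigma}$, because the residual $\zeta(2\sigma-1)+1/(2T^{2\sigma-1})$ is non-positive for $T\geq 1$.

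For $B$, I would simply expand $|B|^{2}\leq\big(T_2^{1-\sigma}/|s-1|+D/T_2^{\sigma}\big)^{2}$ and integrate the three pieces on $[T_1,T_2]$: using the elementary bound $|s-1|^{2}=(\sigma-1)^{2}+t^{2}\geq t^{2}$ for $t\geq T_1\geq 1$, one obtains exactly $T_2^{2-2\sigma}(1/T_1-1/T_2)$, $2DT_2^{1-2\sigma}\log(T_2/T_1)$, and $D^{2}(T_2-T_1)/T_2^{2\sigma}$, which are the three summands collected inside the $(1+1/\rho)$-factor of \eqref{eq:z-all}. Packaging these estimates with the prefactors coming from Lemma~\ref{lem:handy} gives the claimed upper bound. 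The lower bound is proved through the same decomposition, replacing upper by lower bounds (and vice versa) in Proposition~\ref{pr:brudern}, Lemma~\ref{le:harmonic} and Lemma~\ref{recip}, and exploiting the fact that for $\rho\in(-1,0)$ we have $1+\rho>0$ and $1+1/\rho<0$, so that an \emph{upper} estimate for $\int|B|^2$ combined with a negative weight still contributes correctly to a \emph{lower} estimate for $\int|\zeta|^2$. I expect the main nuisance of the whole proof to lie precisely in this sign book-keeping for the lower bound, together with verifying that the various $f_{1,j}^{\pm}$ defined piecewise on $[\tfrac12,1]$ are actually continuous through their removable singularities at $\sigma=\tfrac12$ (for $f_{1,1}$) and $\sigma=1$ (for $f_{1,2}$), so that the statement holds uniformly across the full range of $\sigma$.
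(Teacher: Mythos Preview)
Your proposal is correct and follows essentially the same route as the paper: decompose $\zeta$ via Lemma~\ref{le:353} with $X=T_2$, apply Lemma~\ref{lem:handy} to split into the Dirichlet-polynomial part and the remainder, estimate the former with Proposition~\ref{pr:brudern} together with Lemmas~\ref{le:harmonic} and~\ref{recip} (including the observation $\zeta(2\sigma-1)+\tfrac{1}{2}X^{1-2\sigma}\leq 0$ for $\tfrac12<\sigma<1$), and bound the remainder integral termwise using $|s-1|\geq t$. The only cosmetic difference is that the paper keeps $X$ generic until the end and then sets $X=T_2$, whereas you fix $X=T_2$ from the start; your remark about continuity of the $f_{1,j}^{\pm}$ across the case boundaries is not needed, since the statement treats the endpoints $\sigma=\tfrac12$ and $\sigma=1$ as separate cases.
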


\begin{proof}
Let $1\leq T_{1}\leq T_{2}$. By Lemma~\ref{le:353}, for any $X\geq T_{2}$ we have
\begin{align}
\int_{T_1}^{T_2}|\zeta(\sigma+it)|^2dt & \leq\int_{T_1}^{T_2} (|Z(t)|+|R(t)|)^2dt, \label{eq:cs} \\
Z(t)=\sum_{n\leq X}\frac{1}{n^{s}}, & \ \ \ \ \ \ \ \ \ \ R(t)=\left|\frac{X^{1-s}}{s-1}\right|+\frac{D}{X^{\sigma}}, \nonumber
\end{align}
where $s = \sigma + i t$. We also obtain a lower bound for the expression above by writing $|Z(t)|-|R(t)|\leq |\zeta(\sigma+it)|$. Hence, by Lemma \ref{lem:handy}, for any $\rho>0$,
\[\int_{T_1}^{T_2} (|Z(t)|+|R(t)|)^2dt
\leq (1 + \rho) \int_{T_1}^{T_2} |Z(t)|^2 dt
+ \left(1 + \frac{1}{\rho}\right)  \int_{T_1}^{T_2} |R(t)|^2 dt,\]
and for any $-1<\rho<0$,
\[\int_{T_1}^{T_2} (|Z(t)|-|R(t)|)^2dt
\geq (1 + \rho) \int_{T_1}^{T_2} |Z(t)|^2 dt
+ \left(1 + \frac{1}{\rho}\right)  \int_{T_1}^{T_2} |R(t)|^2 dt.\]
Applying Proposition~\ref{pr:brudern} with $T=T_{2}-T_{1}$ and $a_{n}=\frac{1}{n^{\sigma+iT_{1}}}$, we see that
\begin{equation}\label{eq:ls}
\int_{T_{1}}^{T_{2}}|Z(t)|^{2}dt=\left(T_{2}-T_{1}+\frac{E}{2}\right)\sum_{n\leq X}|a_{n}|^{2}+O^{*}\left(E \sum_{n\leq X} n |a_{n}|^{2}\right),
\end{equation}
If $\sigma=\frac{1}{2}$, we use Lemma~\ref{le:harmonic} for the first term and $\sum_{n\leq X}1=\lfloor X\rfloor\leq X$ for the second. If $\frac{1}{2}<\sigma<1$ we use Lemma~\ref{recip} for both terms and the inequality $\zeta(2\sigma-1)+\frac{1}{X^{2\sigma-1}}<\zeta(0)+1=\frac{1}{2}$. 
 If $\sigma=1$ we use Lemma~\ref{recip} for the first and Lemma~\ref{le:harmonic} for the second. This analysis gives the following upper bounds for $\int_{T_1}^{T_2}|Z(t)|^2$:
\begin{equation*}
\left(T_{2}-T_{1}+\frac{E}{2}\right)\left(\log X+\gamma+\frac{1}{2X}\right)+EX
\end{equation*}
if $\sigma=\frac{1}{2}$,
\begin{equation*}
\left(T_{2}-T_{1}+\frac{E}{2}\right)\left(\zeta(2\sigma)-\frac{1}{(2\sigma-1)X^{2\sigma-1}}+\frac{1}{X^{2\sigma}}\right)+\frac{EX^{2-2\sigma}}{2(1-\sigma)}+\frac{E}{2} \end{equation*}
if $\frac{1}{2}<\sigma<1$, and
\begin{equation*}
\left(T_{2}-T_{1}+\frac{E}{2}\right)\left(\zeta(2)-\frac{1}{X}+\frac{1}{X^{2}}\right)+E\left(\log X+\gamma+\frac{1}{2X}\right) 
\end{equation*}
if $\sigma=1$. Analogous lower bounds can be deduced respectively, using the same lemmas.

As for the second term in \eqref{eq:cs},
\begin{equation}\label{eq:R}
\int_{T_{1}}^{T_{2}}|R(t)|^{2}dt=\int_{T_{1}}^{T_{2}}\left|\frac{X^{1-s}}{s-1}\right|^2+\frac{D^{2}}{X^{2\sigma}}+\frac{2D}{X^{\sigma}}\left|\frac{X^{1-s}}{s-1}\right|dt. 
\end{equation}
Thanks to our condition $\rho>-1$ for the lower bound, and as we want non-trivial lower bounds, with $R(t)$ being smaller in magnitude than $Z(t)$, it suffices to have only an upper bound for \eqref{eq:R}. Hence, in order to bound the expression on the above right side, we observe that
\begin{equation*}
  \int_{T_1}^{T_2}\left|\frac{X^{1-s}}{s-1}\right|^2dt\leq X^{2-2\sigma}\int_{T_1}^{T_2}\frac{dt}{t^{2}}=X^{2-2\sigma}\left(\frac{1}{T_{1}}-\frac{1}{T_{2}}\right).
\end{equation*}
For the second term we simply have $\int_{T_1}^{T_2}D^2X^{-2\sigma}dt=(T_2-T_1)D^2X^{-2\sigma}$, while the third one is bounded as
\begin{equation*}
\int_{T_1}^{T_2}\frac{2D}{X^{\sigma}}\left|\frac{X^{1-s}}{s-1}\right|dt\leq 2DX^{1-2\sigma}\int_{T_1}^{T_2}\frac{dt}{t}=2DX^{1-2\sigma}\log\left(\frac{T_{2}}{T_{1}}\right).
\end{equation*}
We obtain then
\begin{equation*}
\int_{T_1}^{T_2}|R(t)|^2dt\leq X^{2-2\sigma}\left(\frac{1}{T_{1}}-\frac{1}{T_{2}}\right)+\frac{D^2(T_2-T_1)}{X^{2\sigma}}+2DX^{1-2\sigma}\log\left(\frac{T_{2}}{T_{1}}\right).
\end{equation*}
 
Putting everything together, and imposing $X=T_{2}$ in order to minimize the various terms that arise ($X<T_{2}$ is not possible, by the conditions in Lemma~\ref{le:353}), we obtain the result in the statement.
\end{proof}

\begin{proposition}\label{prop:z-allb}
Let $\frac{1}{2}\leq\sigma\leq 1$ and $T_{1},T_{2}$ be real numbers such that $1\leq T_{1}\leq T_{2}$. 
Then, for any $\rho>0$,
$\int_{T_{1}}^{T_{2}}|\zeta(\sigma+it)|^{2}dt$ is at most
\begin{equation}\label{eq:z-allb} 
(1+\rho)\left(f_{2,1}^{+}(\sigma,T_{1},T_{2})+f_{2,2}^{+}(\sigma,T_{2})\right)+\left(1+\frac{1}{\rho}\right)f_{2,3}^{+}(\sigma,T_{1},T_{2}),
\end{equation}
where
\begin{align*}
f_{2,1}^{+}(\sigma,T_{1},T_{2})\!= & \!\begin{cases}
  T_{2}\log T_{2}\!-\!T_{1}\log T_{1}\!-\!(1\!-\!\gamma)(T_{2}\!-\!T_{1})
  \!+\!\frac{1}{2} \log \frac{T_2}{T_1} & \text{if $\sigma=\frac{1}{2}$,} \\ \\
\zeta(2\sigma)(T_{2}-T_{1}) & \text{if $\sigma>\frac{1}{2}$,}\end{cases} \\ \\
f_{2,2}^{+}(\sigma,T_{2})\!= & \!\begin{cases}
2T_{2}\log T_{2}\!+\!\left(2\gamma\!+\!\frac{E}{2}\!+\!16\right)T_{2}\!+\!\left(\frac{E}{4}\!-\!1\right)\log T_{2} & \\ 
 \ \ \ \ \ +1\!+\!\left(\frac{E}{4}\!-\!1\right)\gamma\!+\!\left(\frac{E}{4}\!-\!1\right)\frac{1}{2T_{2}} & \!\!\!\!\!\!\!\!\!\!\!\!\!\!\!\!\!\!\!\text{if $\sigma=\frac{1}{2}$,} \\ \\ 
\frac{2T_{2}^{2-2\sigma}\log T_{2}}{1-\sigma}\!+\!\left(\frac{E}{2}\!+\!2\gamma\!+\!\frac{4}{1-\sigma}\right)\!\frac{T_{2}^{2-2\sigma}}{1-\sigma}\!+\!\left(\frac{E}{4}\!-\!1\right)\!\zeta(2\sigma) & \\ 
 \ \ \ \ \ +\!\left(\!\frac{1}{1-\sigma}\!-\!\frac{\frac{E}{4}-1}{2\sigma-1}\right)\!\frac{1}{T_{2}^{2\sigma-1}}\!+\!\left(\frac{E}{4}\!-\!1\right)\!\frac{1}{2T_{2}^{2\sigma}} & \!\!\!\!\!\!\!\!\!\!\!\!\!\!\!\!\!\!\!\text{if $\frac{1}{2}<\sigma<1$,} \\ \\ 
3\log^{2}T_{2}\!+\!\left(6\gamma\!+\!\frac{E}{2}\right)\log T_{2}\!+\!3\gamma^{2}\!+\!\frac{E}{2}\gamma & \\
 \ \ \ \ \ +\!\left(\frac{E}{4}\!-\!1\right)\!\zeta(2)\!+\!\frac{3\log T_{2}}{T_{2}}\!+\!\frac{3\gamma+\frac{E}{4}}{T_{2}}\!+\!\frac{3}{4T_{2}^{2}} & \!\!\!\!\!\!\!\!\!\!\!\!\!\!\!\!\!\!\!\text{if $\sigma=1$,}\end{cases} \\ \\ 
f_{2,3}^{+}(\sigma,T_{1},T_{2})= & \begin{cases}
(1+D)^{2}\log\left(\frac{T_{2}}{T_{1}}\right) & \text{if $\sigma=\frac{1}{2}$,} \\ 
\frac{(1+D)^{2}}{2\sigma-1} & \text{if $\frac{1}{2}<\sigma\leq 1$,}\end{cases}
\end{align*}
and the constants $D$ and $E$ are as in Lemma~\ref{le:353} with $C=\lfloor T_{2}\rfloor$ and as in Proposition~\ref{pr:brudern}, respectively. Moreover, for any $-1<\rho<0$, $\int_{T_{1}}^{T_{2}}|\zeta(\sigma+it)|^{2}dt$ is bounded from below by the expression in \eqref{eq:z-allb} where $f_{2,1}^{+}$, $f_{2,2}^{+}$, $f_{2,3}^{+}$ are replaced respectively by
\begin{align*}
f_{2,1}^{-}(\sigma,T_{1},T_{2})\!= & \!\begin{cases}
  T_{2}\log T_{2}\!-\!T_{1}\log T_{1}\!-\!(1\!-\!\gamma)(T_{2}\!-\!T_{1})
  \!-\!c\log \frac{T_{2}}{T_{1}} & \text{if $\sigma=\frac{1}{2}$,} \\ \\
\zeta(2\sigma)(T_{2}\!-\!T_{1})\!-\!\frac{T_{2}^{2-2\sigma}-T_{1}^{2-2\sigma}}{2(1-\sigma)(2\sigma-1)}\!-\!\frac{T_{2}^{1-2\sigma}-T_{1}^{1-2\sigma}}{2(2\sigma-1)} & \text{if $\sigma>\frac{1}{2}$,}\end{cases} \\ 
f_{2,2}^{-}(\sigma,T_{2})\!= & \!-f_{2,2}^{+}(\sigma,T_{2}),\;\;\;\;\;\;\;\;\;\;
f_{2,3}^{-}(\sigma,T_{1},T_{2})\!=\!f_{2,3}^{+}(\sigma,T_{1},T_{2}),
\end{align*}
where $c$ is as in Lemma~\ref{le:harmonic}.
\end{proposition}

\begin{proof}
We start with the bound in Lemma~\ref{le:353}. For $s=\sigma+it$ and $X=t$, by the triangle inequality we get
\begin{align}
\int_{T_{1}}^{T_{2}}|\zeta(\sigma+it)|^{2}dt = & \int_{T_{1}}^{T_{2}}\left|\sum_{n\leq t}\frac{1}{n^{\sigma+it}}\right|^{2}\!\!dt + O^{*}\left(2\int_{T_{1}}^{T_{2}}\left|\sum_{n\leq t}\frac{1}{n^{\sigma+it}}\right|\left|\frac{t^{1-s}}{s-1}\right|dt\right. \nonumber \\
 & + 2D\int_{T_{1}}^{T_{2}}\left|\sum_{n\leq t}\frac{1}{n^{\sigma+it}}\right|t^{-\sigma}dt + \int_{T_{1}}^{T_{2}}\left|\frac{t^{1-s}}{s-1}\right|^{2}dt \nonumber \\
 & + \left.2D\int_{T_{1}}^{T_{2}}\left|\frac{t^{1-s}}{s-1}\right|t^{-\sigma}dt + D^{2}\int_{T_{1}}^{T_{2}}t^{-2\sigma}dt\right). \label{eq:zetalong}
\end{align}
The second and third term in \eqref{eq:zetalong} can be treated using the Cauchy-Schwarz inequality and reduced to the other integrals in the expression. Observe that the integrands $\left|\frac{t^{1-s}}{s-1}\right|t^{-\sigma}$, $\left|\frac{t^{1-s}}{s-1}\right|^{2}$ are both bounded from above by $t^{-2\sigma}$, so that all of their integrals are bounded by $\frac{1}{2\sigma-1}$ if $\frac{1}{2}<\sigma\leq 1$ and by $\log\left(\frac{T_{2}}{T_{1}}\right)$ if $\sigma=\frac{1}{2}$. Using then Lemma~\ref{lem:handy} we get
\begin{equation}\label{eq:firstbreak}
\int_{T_{1}}^{T_{2}}|\zeta(\sigma+it)|^{2}dt\leq(1+\rho)\int_{T_{1}}^{T_{2}}\left|\sum_{n\leq t}\frac{1}{n^{\sigma+it}}\right|^{2}dt+\left(1+\frac{1}{\rho}\right)f_{2,3}^{+}(\sigma,T_{1},T_{2}),
\end{equation}
and an analogous lower bound for $-1<\rho<0$.

We want now to estimate the first term in \eqref{eq:firstbreak}, namely we want bounds for the integral $\int_{T_{1}}^{T_{2}}\left|\sum_{n\leq t}a_{n}e^{i\lambda_{n}t}\right|^{2}dt$,
where in our case $a_{n}=\frac{1}{n^{\sigma}}\in\mathbb{R}^{+}$ and $\lambda_{n}=-\log n$. First, note that
\begin{align}
\int_{T_{1}}^{T_{2}}\left|\sum_{n\leq t}a_{n}e^{i\lambda_{n}t}\right|^{2}dt
 = & \int_{T_{1}}^{T_{2}}\sum_{n\leq t}a_{n}^{2}dt+\int_{T_{1}}^{T_{2}}\sum_{\substack{l,r\leq t \\ l\neq r}}a_{l}a_{r}e^{i(\lambda_{l}-\lambda_{r})t}dt. \label{eq:brudpiece}
\end{align}
If $\frac{1}{2}<\sigma\leq 1$, the first integral in \eqref{eq:brudpiece} is bounded by Lemma~\ref{recip} as
\begin{align*}
 & \int_{T_{1}}^{T_{2}}\left(\zeta(2\sigma)-\frac{t^{1-2\sigma}}{2\sigma-1}-\frac{1}{2t^{2\sigma}}\right)dt \\
\leq & \int_{T_{1}}^{T_{2}}\sum_{n\leq t}a_{n}^{2}dt\leq\int_{T_{1}}^{T_{2}}\left(\zeta(2\sigma)-\frac{t^{1-2\sigma}}{2\sigma-1}+\frac{1}{t^{2\sigma}}\right)dt, 
\end{align*} 
so that
\begin{align*}
 & \zeta(2\sigma)(T_{2}-T_{1})-\frac{T_{2}^{2-2\sigma}-T_{1}^{2-2\sigma}}{2(1-\sigma)(2\sigma-1)}-\frac{T_{2}^{1-2\sigma}-T_{1}^{1-2\sigma}}{2(2\sigma-1)} \\
\leq & \int_{T_{1}}^{T_{2}}\sum_{n\leq t}a_{n}^{2}dt\leq\zeta(2\sigma)(T_{2}-T_{1}), 
\end{align*}
where we use that $-\frac{t^{1-2\sigma}}{2\sigma-1}+t^{-2\sigma}\leq 0$ (under the conditions for $\sigma,t$), and we can extract an analogous lower bound.

If $\sigma=\frac{1}{2}$, the first integral is bounded from above as
\begin{align}
 & \ \int_{T_{1}}^{T_{2}}\sum_{n\leq t}a_{n}^{2}dt\leq \int_{T_{1}}^{T_{2}}\left(\log t+\gamma+\frac{1}{2t}\right)dt \nonumber \\ 
= & \ T_{2}\log T_{2}-T_{1}\log T_{1}-(1-\gamma)(T_{2}-T_{1}) +\frac{1}{2}(\log T_{2}-\log T_{1}), \label{eq:verymain12} 
\end{align}
by Lemma~\ref{le:harmonic}, from which we can derive an analogous lower bound.

As for the second integral in \eqref{eq:brudpiece}, consider first $T_{1},T_{2}$ to be integers for simplicity: we make use of the fact that a sum for $l,r\leq t$ is the same as a sum for $l,r\leq\lfloor t\rfloor$ and get
\begin{align}\label{boundmv} 
\int_{T_{1}}^{T_{2}}\sum_{\substack{l,r\leq t \\ l\neq r}}a_{l}a_{r}e^{i(\lambda_{l}-\lambda_{r})t}dt
 = & \sum_{j=T_{1}}^{T_{2}-1}\sum_{\substack{l,r\leq j \\ l\neq r}}a_{l}a_{r}\frac{e^{i(\lambda_{l}-\lambda_{r})(j+1)}-e^{i(\lambda_{l}-\lambda_{r})j}}{i(\lambda_{l}-\lambda_{r})} \nonumber \\
 = & \sum_{\substack{l,r\leq T_{2}-1 \\ l\neq r}}\sum_{j=\max\{T_{1},l,r\}}^{T_{2}-1}\!\!\!a_{l}a_{r}\frac{e^{i(\lambda_{l}-\lambda_{r})(j+1)}-e^{i(\lambda_{l}-\lambda_{r})j}}{i(\lambda_{l}-\lambda_{r})} \nonumber \\
 = & \sum_{\substack{l,r\leq T_{2}-1 \\ l\neq r}}\frac{a_{l}a_{r}}{\lambda_{l}-\lambda_{r}}\frac{e^{i(\lambda_{l}-\lambda_{r})T_{2}}-e^{i(\lambda_{l}-\lambda_{r})\max\{T_{1},l,r\}}}{i}.
\end{align}
For $T_{1},T_{2}$ general, we have to consider two additional integrals $\int_{T_{1}}^{\lceil T_{1}\rceil},\int_{\lfloor T_{2}\rfloor}^{T_{2}}$; we obtain however the same bound as in \eqref{boundmv}, with the summation going up to $\lfloor T_{2}\rfloor$ and with $T_1$ replaced by $\lfloor T_{1}\rfloor$.

We can divide the last sum in \eqref{boundmv} into two sums, one for each of the summands in the numerator of the second fraction. For the first sum we can reason as in Proposition~\ref{pr:brudern}, using \cite{Pr84} and obtaining
\begin{equation}\label{eq:offdiag1}
\left|\sum_{\substack{l,r\leq T_{2}-1 \\ l\neq r}}\frac{a_{l}a_{r}e^{i(\lambda_{l}-\lambda_{r})T_{2}}}{i(\lambda_{l}-\lambda_{r})}\right|\leq\frac{E}{2}\sum_{n\leq\lfloor T_{2}\rfloor}\frac{a_{n}^{2}}{\min_{n'\neq n}|\lambda_{n}-\lambda_{n'}|}.
\end{equation}
As for the second sum, we can bound the summand in absolute value by $\frac{a_{l}a_{r}}{|\lambda_{l}-\lambda_{r}|}$; then we use classical arguments (see \cite[(3.5)-(3.6)]{Ka13}), and $\sum_{\substack{l,r\leq\lfloor T_{2}\rfloor \\ l\neq r}}\frac{a_{l}a_{r}}{|\lambda_{l}-\lambda_{r}|}$ is at most
\begin{equation}\label{eq:offdiag2}
\sum_{\substack{l,r\leq T_{2} \\ l\neq r}}\frac{a_{l}a_{r}}{|\lambda_{l}-\lambda_{r}|}\leq
\left(\sum_{r\leq T_{2}}\frac{1}{r^{\sigma}}\right)^{2}-\sum_{r\leq T_{2}}\frac{1}{r^{2\sigma}}+2\left(\sum_{r\leq T_{2}}\frac{1}{r^{2\sigma-1}}\right)\left(\sum_{r\leq T_{2}}\frac{1}{r}\right).
\end{equation}  
Upon putting \eqref{eq:offdiag1} and \eqref{eq:offdiag2} together, we resort to Lemmas~\ref{le:harmonic} and \ref{recip} along with the simplifications $\zeta(\alpha)+\frac{1}{T_{2}^{\alpha}}<\frac{1}{2T_{2}^{\alpha}}$, $\sum_{r\leq T_{2}}\frac{1}{r^{\alpha}}\leq\frac{2T_2^{1-\alpha}}{1-\alpha}$ for $0<\alpha<1$ and $T_{2}\geq 1$, and the bound $\sum_{r\leq T_{2}}\frac{1}{r^{2}}\leq\zeta(2)$. Subsequently, we obtain $f_{2,2}^{\pm}(\sigma,T_{2})$ as in the statement. 
\end{proof}

\subsection{Mean value estimates of $\zeta(s)$ for $\Re(s)\in\left[\frac{1}{2},1\right]$}\label{mvb}

\tiny
\begin{mysage}
Dz=Dfunction(floor(TZ))
###
#Given the coefficients of the terms inside f11,f12,f13 when sigma=1/2,
#the following code returns the coefficients of the whole bound as in Prop. 4.1.
#The entry [i][j] of each f1* is the coefficient of T^(1-i)*(log(T))^(1-j),
#while C[i][j] is the coefficient of T^(1-i/2)*(log(T))^(3/2-j/2).
#Here f13 indicates the term multiplied by 1+1/rho in Prop. 4.1 (analogous to f23 in Prop. 4.2).
#We also assume that we are choosing rho of order 1/sqrt(log(T)).
def Cz_12_string(f11,f12,f13,rho):
    C=[[0,0,0,0,0],[0,0,0,0,0],[0,0,0,0,0],[0,0,0,0,0],[0,0,0,0,0],[0,0,0,0,0]]
    i=0
    while i<=2:
        j=0
        while j<=1:
            if 2*i-2>=0:
                #Here we add the terms multiplied by 1*T:
                #the terms of f11 of order Tlog(T) and T do not exist
                #(the result itself would be quite wrong if they did),
                #so asking for 2*i-2>=0 does not remove anything.
                C[2*i-2][2*j+1]=C[2*i-2][2*j+1]+f11[i][j]
                #Here we add the terms multiplied by rho*T.
                C[2*i-2][2*j+2]=C[2*i-2][2*j+2]+rho*f11[i][j]
            #Here we add the terms multiplied by 1/rho.
            C[2*i][2*j]=C[2*i][2*j]+1/rho*f13[i][j]
            #Here we add the terms multiplied by 1.
            C[2*i][2*j+1]=C[2*i][2*j+1]+(E/2-1)*f11[i][j]+E*f12[i][j]+f13[i][j]
            #Here we add the terms multiplied by rho.
            C[2*i][2*j+2]=C[2*i][2*j+2]+rho*((E/2-1)*f11[i][j]+E*f12[i][j])
            j=j+1
        i=i+1
    return C
###
#The following code is analogous to Cz_12_string, but it is designed for sigma=1.
#However, we will later need a version of these bounds with lower extremum kept general:
#therefore we use 'index' to separate the two cases.
#If index=0 we are computing the version with lower extremum=1 (and rho is of order 1/sqrt(T)),
#while if index=1 we have a general lower extremum u (and rho is of order 1/sqrt(uT))
#and then, as 1<=u<=T, in each term we use either the simplification u-->1 or u-->T,
#whichever is the correct one to use for the bound;
#the effect of such simplification is that, in the upper bound constants,
#if index=1 the only thing that changes with respect to index=0
#is that the constants multiplied by 1/rho move up by sqrt(T) (since rho will also
#be multiplied by 1/sqrt(u)), and the same applies to the lower bound constants
#(minorizing 1+rho,1+1/rho and then looking inside the expressions they multiply).
#The entry [i][j] of each f1* is the coefficient of T^(-i)*(log(T))^(1-j),
#while C[i][j] is the coefficient of T^(1-i/2)*(log(T))^(1-j).
#Here f13 indicates the term multiplied by 1+1/rho in Prop. 4.1 (analogous to f23 in Prop. 4.2).
#As already said, we are choosing rho of order 1/sqrt(T) or 1/sqrt(uT) (depending on 'index').
def Cz_1_string(f11,f12,f13,rho,index):
    C=[[0,0],[0,0],[0,0],[0,0],[0,0],[0,0],[0,0],[0,0]]
    i=0
    while i<=2:
        j=0
        while j<=1:
            #Here we add the terms multiplied by 1*T.
            C[2*i][j]=C[2*i][j]+f11[i][j]
            #Here we add the terms multiplied by rho*T.
            C[2*i+1][j]=C[2*i+1][j]+rho*f11[i][j]
            #Here we add the terms multiplied by 1/rho.
            #As already said, the index changes the order of 1/rho after simplification,
            #which is sqrt(T) if index=0 and T if index=1.
            if index==0:
                C[2*i+1][j]=C[2*i+1][j]+1/rho*f13[i][j]
            elif index==1:
                C[2*i][j]=C[2*i][j]+1/rho*f13[i][j]
            else:
                return 'Error'
            #Here we add the terms multiplied by 1.
            C[2*i+2][j]=C[2*i+2][j]+(E/2-1)*f11[i][j]+E*f12[i][j]+f13[i][j]
            #Here we add the terms multiplied by rho.
            C[2*i+3][j]=C[2*i+3][j]+rho*((E/2-1)*f11[i][j]+E*f12[i][j])
            j=j+1
        i=i+1
    return C
###
#The following code returns the coefficient of the resulting error term
#after merging all the small error terms in C,
#where C is the result of Cz_12_string (length=6) or Cz_1_string (length=8);
#the terms to be merged are T^(1-i0/2)*(log(T))^(3/2-j0/2) and lower in the first case
#and T^(1-i0/2)*(log(T))^(1-j0) and lower in the second case
#(and that will be the order of the resulting error term).
#The 'sign' has effect only for length=8 (i.e. the case of Cz_1_string):
#if sign=0, the bound has lower extremum equal to 1 and nothing happens;
#if sign=1 (resp. -1), we are working with the upper (resp. lower) bound with general lower extremum,
#from which we have to remove the small contributions that are not counted in the numerical bound
#(see the code below on how we do it, and the proof of Thm. 4.5 on why we do it).
def Cz_fix_coeff(C,length,i0,j0,sign):
    S=0
    i=i0
    j=j0
    while i<length:
        while j<len(C[i]):
            if length==8 and sign!=0:
                 if sign==1 or sign==-1:
                     #The contribution to remove is zeta(2)*(-u), as seen in (4.17).
                     #The error terms in the upper (resp. lower) bound are positive (resp. negative),
                     #but the coefficients that are being fed to the procedure are provided in absolute value,
                     #so to remove the contribution we have to add (resp. subtract) zeta(2).
                     C[2][1]=C[2][1]+zeta(2)*sign
                 else:
                     return 'Error'
            #Every term that is of smaller order than the highest term we are merging
            #(namely the T^(1-i0/2)*(log(T))^(3/2-j0/2) term or the
            #T^(1-i0/2)*(log(T))^(1-j0) term) comes with a correction that makes it
            #impact less on the final product: for example, if the highest term is Tlog(T)
            #and the term we are considering is sqrt(T), we can say sqrt(T)<=Tlog(T)*1/(sqrt(T0)*log(T0))
            #for any T>=T0 we allow ourselves to consider.
            if length==6:
                correction=TZ^(1/2*(i0-i))*(log(TZ))^(1/2*(j0-j))
            elif length==8:
                correction=TZ^(1/2*(i0-i))*(log(TZ))^(j0-j)
            #For T<e^(a/b) though, (log(T))^a/T^b is not decreasing, so we need a correction
            #to the correction: for all the terms with i0-i<0 and j0-j>0,
            #in case of T low the best we can do is taking the maximum
            #of the function (log(x))^a/x^b as our 'correction',
            #which occurs in e^(a/b), for the appropriate a,b.
            if i0-i<0 and j0-j>0:
                if length==6 and TZ<e^((j0-j)/(i-i0)):
                    correction=((j0-j)/(i-i0)/e)^((j0-j)/2)
                elif length==8 and TZ<e^(2*(j0-j)/(i-i0)):
                    correction=(2*(j0-j)/(i-i0)/e)^(j0-j)
            summand = RIF( C[i][j]*correction )
            #For the terms of smaller order than the highest term we are merging,
            #we must ignore the negative contributions (with a negative coefficient,
            #the correction we pointed out before changes verse in the inequality,
            #for example -sqrt(T)>=-Tlog(T)*1/(sqrt(T0)*log(T0)), so we just cut it off and say -sqrt(T)<=0).
            if i!=i0 or j!=j0:
                summand=max(0,summand)
            S=S+summand
            j=j+1
        j=0
        i=i+1
    return S
###
rho_12_0_m12 = RIF( 1 ) #Coefficient of (log(T))^(-1/2) inside rho for sigma=1/2.
#The following strings collect the coefficients of the f1* when sigma=1/2,
#where the entry [i][j] of each f1* is the coefficient of T^(1-i)*(log(T))^(1-j).
#The 'u,l' indicate upper and lower bounds, respectively.
f11_12_u = [[ RIF( 0 ) , RIF( 0 ) ],[ RIF( 1 ) , RIF( g ) ],[ RIF( 0 ) , RIF( 1/2 ) ]]
f12_12_u = [[ RIF( 0 ) , RIF( 1 ) ],[ RIF( 0 ) , RIF( 1/2 ) ],[ RIF( 0 ) , RIF( 0 ) ]]
f13_12_u = [[ RIF( 0 ) , RIF( 1 ) ],[ RIF( 2*Dz ) , RIF( -1+Dz^2 ) ],[ RIF( 0 ) , RIF( -Dz^2 ) ]]
f11_12_l = [[ RIF( 0 ) , RIF( 0 ) ],[ RIF( -1 ) , RIF( -g ) ],[ RIF( 0 ) , RIF( lower_harm ) ]]
f12_12_l = [[ RIF( 0 ) , RIF( -1 ) ],[ RIF( 0 ) , RIF( 1/2-zeta(0) ) ],[ RIF( 0 ) , RIF( 0 ) ]]
f13_12_l = [[ RIF( 0 ) , RIF( -1 ) ],[ RIF( -2*Dz ) , RIF( 1-Dz^2 ) ],[ RIF( 0 ) , RIF( Dz^2 ) ]]
#The terms Cz_12_*a are those of order T*sqrt(log(T)),
#and the Cz_12_*b collect the error terms from sqrt(T)*log(T) downwards.
Cz_12_ua = Cz_12_string(f11_12_u,f12_12_u,f13_12_u,rho_12_0_m12)[0][2]
Cz_12_ub = Cz_fix_coeff(Cz_12_string(f11_12_u,f12_12_u,f13_12_u,rho_12_0_m12),6,0,3,0)
Cz_12_la = Cz_12_string(f11_12_l,f12_12_l,f13_12_l,-rho_12_0_m12)[0][2]
Cz_12_lb = Cz_fix_coeff(Cz_12_string(f11_12_l,f12_12_l,f13_12_l,-rho_12_0_m12),6,0,3,0)
###
rho_1_m12_0 = RIF( 1/sqrt(zeta(2)) ) #Coefficient of T^(-1/2) inside rho for sigma=1.
#The following strings collect the coefficients of the f1* when sigma=1,
#where the entry [i][j] of each f1* is the coefficient of T^(-i)*(log(T))^(1-j).
#The 'u,l' indicate upper and lower bounds, respectively.
f11_1_u = [[ RIF( 0 ) , RIF( zeta(2) ) ],[ RIF( 0 ) , RIF( -1 ) ],[ RIF( 0 ) , RIF( 1 ) ]]
f12_1_u = [[ RIF( 1 ) , RIF( g ) ],[ RIF( 0 ) , RIF( 1/2 ) ],[ RIF( 0 ) , RIF( 0 ) ]]
f13_1_u = [[ RIF( 0 ) , RIF( 1 ) ],[ RIF( 2*Dz ) , RIF( -1+Dz^2 ) ],[ RIF( 0 ) , RIF( -Dz^2 ) ]]
f11_1_l = [[ RIF( 0 ) , RIF( -zeta(2) ) ],[ RIF( 0 ) , RIF( 1 ) ],[ RIF( 0 ) , RIF( 1/2 ) ]]
f12_1_l = [[ RIF( -1 ) , RIF( -g ) ],[ RIF( 0 ) , RIF( lower_harm ) ],[ RIF( 0 ) , RIF( 0 ) ]]
f13_1_l = [[ RIF( 0 ) , RIF( -1 ) ],[ RIF( -2*Dz ) , RIF( 1-Dz^2 ) ],[ RIF( 0 ) , RIF( Dz^2 ) ]]
#The terms Cz_1_* collect the error terms from log(T) downwards.
Cz_1_u = Cz_fix_coeff(Cz_1_string(f11_1_u,f12_1_u,f13_1_u,rho_1_m12_0,0),8,2,0,0)
Cz_1_l = Cz_fix_coeff(Cz_1_string(f11_1_l,f12_1_l,f13_1_l,-rho_1_m12_0,0),8,2,0,0)
###
#Given the coefficients of the terms inside f21,f22,f23 when 1/2<sigma<1,
#the following code returns the coefficients of the whole bound as in Prop. 4.2.
#The entry [i][j][l][k] of each f2* is the coefficient of T^(1-i+(1-2*sigma)j)*(log(T))^(1-l)
#times the following functions of sigma: 1 (for k=0), 1/(1-sigma) (for k=1), 1/(1-sigma)^2 (for k=2)
#1/(2*sigma-1) (for k=3), 1/((1-sigma)*(2*sigma-1)) (for k=4), zeta(2*sigma) (for k=5);
#C[i][j][l][k] is the coefficient of T^(1-i/2+(1-2*sigma)j)*(log(T))^(1-l) times the same functions,
#although keep in mind that k=5 is removed by the use of Lemma 2.12.
#We also assume that we are choosing rho of order 1/sqrt((2*sigma-1)*zeta(2*sigma)*T).
def Cz_121_string(f21,f22,f23,rho):
    C=[[[[0,0,0,0,0],[0,0,0,0,0]],[[0,0,0,0,0],[0,0,0,0,0]]], \
      [[[0,0,0,0,0],[0,0,0,0,0]],[[0,0,0,0,0],[0,0,0,0,0]]], \
      [[[0,0,0,0,0],[0,0,0,0,0]],[[0,0,0,0,0],[0,0,0,0,0]]], \
      [[[0,0,0,0,0],[0,0,0,0,0]],[[0,0,0,0,0],[0,0,0,0,0]]], \
      [[[0,0,0,0,0],[0,0,0,0,0]],[[0,0,0,0,0],[0,0,0,0,0]]], \
      [[[0,0,0,0,0],[0,0,0,0,0]],[[0,0,0,0,0],[0,0,0,0,0]]]]
    i=0
    while i<=2:
        j=0
        while j<=1:
            l=0
            while l<=1:
                k=0
                #We remark that k=5 is treated separately, as it involves zeta.
                while k<=4:
                    #Here we add the terms multiplied by 1.
                    C[2*i][j][l][k]=C[2*i][j][l][k]+f21[i][j][l][k]+f22[i][j][l][k]+f23[i][j][l][k]
                    #Here we add the terms multiplied by rho.
                    #For the terms with k!=5 and multiplied by rho there is zeta, but we have
                    #(2*sigma-1)*zeta(2*sigma)>1 in the denominator, so the issue solves itself:
                    #the factor in the denominator with zeta and sigma is just bounded by 1.
                    C[2*i+1][j][l][k]=C[2*i+1][j][l][k]+rho*(f21[i][j][l][k]+f22[i][j][l][k])
                    if 2*i-1>=0:
                        #Here we add the terms multiplied by 1/rho:
                        #the terms of f23 of order Tlog(T) and T do not exist
                        #(the result itself would be quite wrong if they did),
                        #so asking for 2*i-1>=0 does not remove anything.
                        #For the terms multiplied by 1/rho, we have
                        #sqrt(zeta(2*sigma)*(2*sigma-1))<sqrt(1+H), so this is how zeta is treated.
                        C[2*i-1][j][l][k]=C[2*i-1][j][l][k]+1/rho*sqrt(1+H)*f23[i][j][l][k]
                    k=k+1
                #For the terms with k=5 (i.e. multiplied by zeta(2*sigma)), we have
                #zeta(2*sigma)<1/(2*sigma-1)+H, so we can absorb them into the terms with k=0,3.
                C[2*i][j][l][3]=C[2*i][j][l][3]+f21[i][j][l][5]+f22[i][j][l][5]+f23[i][j][l][5]
                C[2*i][j][l][0]=C[2*i][j][l][0]+H*(f21[i][j][l][5]+f22[i][j][l][5]+f23[i][j][l][5])
                #For the terms with k=5 and multiplied by rho, we have
                #sqrt(zeta(2*sigma)/(2*sigma-1))<sqrt(1+H)/(2*sigma-1),
                #so we can absorb them into the terms with k=3.
                C[2*i+1][j][l][3]=C[2*i+1][j][l][3]+sqrt(1+H)*rho*(f21[i][j][l][5]+f22[i][j][l][5])
                if 2*i-1>=0:
                    #For the terms with k=5 and multiplied by 1/rho, we have
                    #zeta(2*sigma)^(3/2)*sqrt(2*sigma-1)<sqrt(1+H)*(1/(2*sigma-1)+H),
                    #so we can absorb them into the terms with k=0,3.
                    C[2*i-1][j][l][3]=C[2*i-1][j][l][3]+sqrt(1+H)*1/rho*f23[i][j][l][5]
                    C[2*i-1][j][l][0]=C[2*i-1][j][l][0]+sqrt(1+H)*H*1/rho*f23[i][j][l][5]
                l=l+1
            j=j+1
        i=i+1
    return C
###
#The following code returns the resulting coefficients after merging all the small
#error terms in C, where C is the result of Cz_121_string. The coefficients are those
#of max(T^(2-2*sigma)*log(T),sqrt(T)) times the functions in the definition of Cz_121_string,
#and C[i][j][l][k] is the coefficient of T^(1-i/2+(1-2*sigma)j)*(log(T))^(1-l) times the same functions.
def Cz_var_coeff(C):
    S=[0,0,0,0,0]
    i=0
    j=1
    while i<=5:
        while j<=1:
            l=0
            while l<=1:
                k=0
                while k<=4:
                    #Every term that is of smaller order than the highest term we are merging
                    #(namely either T^(2-2*sigma)*log(T) or sqrt(T)) comes with a correction that makes it
                    #impact less on the final product: for example, if the highest term is sqrt(T)
                    #and the term we are considering is 1, we can say 1<=sqrt(T)/sqrt(T0)
                    #for any T>=T0 we allow ourselves to consider.
                    #We also take advantage of the fact that all the terms with j=0,l=0
                    #have coefficient=0, to make our life easier and avoid having to deal with
                    #corrections to the corrections as we had to do in Cz_fix_coeff.
                    if j==0 and l==0 and C[i][j][l][k]!=0:
                        return 'Error'
                    if j==0:
                        correction=TZ^((1-i-j)/2)
                    elif j==1:
                        correction=TZ^((1-i-j)/2)*(log(TZ))^(1-l)
                    #We must also ignore the negative contributions.
                    S[k]=S[k]+max(0, RIF( C[i][j][l][k]*correction ) )
                    k=k+1
                l=l+1
            j=j+1
        j=0
        i=i+1
    return S
###
rho_121_m12_0_s12msz = RIF( 1+Dz ) #Coefficient of T^(-1/2)/sqrt((2*sigma-1)*zeta(2*sigma))
#inside rho for 1/2<sigma<1.
#The following strings collect the coefficients of the f2* when 1/2<sigma<1,
#where the entry [i][j][l][k] of each f2* is the coefficient of T^(1-i+(1-2*sigma)j)*(log(T))^(1-l)
#times the following functions of sigma: 1 (for k=0), 1/(1-sigma) (for k=1), 1/(1-sigma)^2 (for k=2)
#1/(2*sigma-1) (for k=3), 1/((1-sigma)*(2*sigma-1)) (for k=4), zeta(2*sigma) (for k=5).
#The 'u,l' indicate upper and lower bounds, respectively.
f21_121_u = [[[[ RIF( 0 ) , RIF( 0 ) , RIF( 0 ) , RIF( 0 ) , RIF( 0 ) , RIF( 0 ) ], \
            [ RIF( 0 ) , RIF( 0 ) , RIF( 0 ) , RIF( 0 ) , RIF( 0 ) , RIF( 1 ) ]], \
            [[ RIF( 0 ) , RIF( 0 ) , RIF( 0 ) , RIF( 0 ) , RIF( 0 ) , RIF( 0 ) ], \
            [ RIF( 0 ) , RIF( 0 ) , RIF( 0 ) , RIF( 0 ) , RIF( 0 ) , RIF( 0 ) ]]], \
            [[[ RIF( 0 ) , RIF( 0 ) , RIF( 0 ) , RIF( 0 ) , RIF( 0 ) , RIF( 0 ) ], \
            [ RIF( 0 ) , RIF( 0 ) , RIF( 0 ) , RIF( 0 ) , RIF( 0 ) , RIF( -1 ) ]], \
            [[ RIF( 0 ) , RIF( 0 ) , RIF( 0 ) , RIF( 0 ) , RIF( 0 ) , RIF( 0 ) ], \
            [ RIF( 0 ) , RIF( 0 ) , RIF( 0 ) , RIF( 0 ) , RIF( 0 ) , RIF( 0 ) ]]], \
            [[[ RIF( 0 ) , RIF( 0 ) , RIF( 0 ) , RIF( 0 ) , RIF( 0 ) , RIF( 0 ) ], \
            [ RIF( 0 ) , RIF( 0 ) , RIF( 0 ) , RIF( 0 ) , RIF( 0 ) , RIF( 0 ) ]], \
            [[ RIF( 0 ) , RIF( 0 ) , RIF( 0 ) , RIF( 0 ) , RIF( 0 ) , RIF( 0 ) ], \
            [ RIF( 0 ) , RIF( 0 ) , RIF( 0 ) , RIF( 0 ) , RIF( 0 ) , RIF( 0 ) ]]]]
f22_121_u = [[[[ RIF( 0 ) , RIF( 0 ) , RIF( 0 ) , RIF( 0 ) , RIF( 0 ) , RIF( 0 ) ], \
            [ RIF( 0 ) , RIF( 0 ) , RIF( 0 ) , RIF( 0 ) , RIF( 0 ) , RIF( 0 ) ]], \
            [[ RIF( 0 ) , RIF( 2 ) , RIF( 0 ) , RIF( 0 ) , RIF( 0 ) , RIF( 0 ) ], \
            [ RIF( 0 ) , RIF( E/2+2*g ) , RIF( 4 ) , RIF( 0 ) , RIF( 0 ) , RIF( 0 ) ]]], \
            [[[ RIF( 0 ) , RIF( 0 ) , RIF( 0 ) , RIF( 0 ) , RIF( 0 ) , RIF( 0 ) ], \
            [ RIF( 0 ) , RIF( 0 ) , RIF( 0 ) , RIF( 0 ) , RIF( 0 ) , RIF( E/4-1 ) ]], \
            [[ RIF( 0 ) , RIF( 0 ) , RIF( 0 ) , RIF( 0 ) , RIF( 0 ) , RIF( 0 ) ], \
            [ RIF( 0 ) , RIF( 1 ) , RIF( 0 ) , RIF( -(E/4-1) ) , RIF( 0 ) , RIF( 0 ) ]]], \
            [[[ RIF( 0 ) , RIF( 0 ) , RIF( 0 ) , RIF( 0 ) , RIF( 0 ) , RIF( 0 ) ], \
            [ RIF( 0 ) , RIF( 0 ) , RIF( 0 ) , RIF( 0 ) , RIF( 0 ) , RIF( 0 ) ]], \
            [[ RIF( 0 ) , RIF( 0 ) , RIF( 0 ) , RIF( 0 ) , RIF( 0 ) , RIF( 0 ) ], \
            [ RIF( 1/2*(E/4-1) ) , RIF( 0 ) , RIF( 0 ) , RIF( 0 ) , RIF( 0 ) , RIF( 0 ) ]]]]
f23_121_u = [[[[ RIF( 0 ) , RIF( 0 ) , RIF( 0 ) , RIF( 0 ) , RIF( 0 ) , RIF( 0 ) ], \
            [ RIF( 0 ) , RIF( 0 ) , RIF( 0 ) , RIF( 0 ) , RIF( 0 ) , RIF( 0 ) ]], \
            [[ RIF( 0 ) , RIF( 0 ) , RIF( 0 ) , RIF( 0 ) , RIF( 0 ) , RIF( 0 ) ], \
            [ RIF( 0 ) , RIF( 0 ) , RIF( 0 ) , RIF( 0 ) , RIF( 0 ) , RIF( 0 ) ]]], \
            [[[ RIF( 0 ) , RIF( 0 ) , RIF( 0 ) , RIF( 0 ) , RIF( 0 ) , RIF( 0 ) ], \
            [ RIF( 0 ) , RIF( 0 ) , RIF( 0 ) , RIF( (1+Dz)^2 ) , RIF( 0 ) , RIF( 0 ) ]], \
            [[ RIF( 0 ) , RIF( 0 ) , RIF( 0 ) , RIF( 0 ) , RIF( 0 ) , RIF( 0 ) ], \
            [ RIF( 0 ) , RIF( 0 ) , RIF( 0 ) , RIF( 0 ) , RIF( 0 ) , RIF( 0 ) ]]], \
            [[[ RIF( 0 ) , RIF( 0 ) , RIF( 0 ) , RIF( 0 ) , RIF( 0 ) , RIF( 0 ) ], \
            [ RIF( 0 ) , RIF( 0 ) , RIF( 0 ) , RIF( 0 ) , RIF( 0 ) , RIF( 0 ) ]], \
            [[ RIF( 0 ) , RIF( 0 ) , RIF( 0 ) , RIF( 0 ) , RIF( 0 ) , RIF( 0 ) ], \
            [ RIF( 0 ) , RIF( 0 ) , RIF( 0 ) , RIF( 0 ) , RIF( 0 ) , RIF( 0 ) ]]]]
f21_121_l = [[[[ RIF( 0 ) , RIF( 0 ) , RIF( 0 ) , RIF( 0 ) , RIF( 0 ) , RIF( 0 ) ], \
            [ RIF( 0 ) , RIF( 0 ) , RIF( 0 ) , RIF( 0 ) , RIF( 0 ) , RIF( -1 ) ]], \
            [[ RIF( 0 ) , RIF( 0 ) , RIF( 0 ) , RIF( 0 ) , RIF( 0 ) , RIF( 0 ) ], \
            [ RIF( 0 ) , RIF( 0 ) , RIF( 0 ) , RIF( 0 ) , RIF( 1/2 ) , RIF( 0 ) ]]], \
            [[[ RIF( 0 ) , RIF( 0 ) , RIF( 0 ) , RIF( 0 ) , RIF( 0 ) , RIF( 0 ) ], \
            [ RIF( 0 ) , RIF( 0 ) , RIF( 0 ) , RIF( -1 ) , RIF( -1/2 ) , RIF( 1 ) ]], \
            [[ RIF( 0 ) , RIF( 0 ) , RIF( 0 ) , RIF( 0 ) , RIF( 0 ) , RIF( 0 ) ], \
            [ RIF( 0 ) , RIF( 0 ) , RIF( 0 ) , RIF( 1 ) , RIF( 0 ) , RIF( 0 ) ]]], \
            [[[ RIF( 0 ) , RIF( 0 ) , RIF( 0 ) , RIF( 0 ) , RIF( 0 ) , RIF( 0 ) ], \
            [ RIF( 0 ) , RIF( 0 ) , RIF( 0 ) , RIF( 0 ) , RIF( 0 ) , RIF( 0 ) ]], \
            [[ RIF( 0 ) , RIF( 0 ) , RIF( 0 ) , RIF( 0 ) , RIF( 0 ) , RIF( 0 ) ], \
            [ RIF( 0 ) , RIF( 0 ) , RIF( 0 ) , RIF( 0 ) , RIF( 0 ) , RIF( 0 ) ]]]]
f22_121_l = [[[[ RIF( 0 ) , RIF( 0 ) , RIF( 0 ) , RIF( 0 ) , RIF( 0 ) , RIF( 0 ) ], \
            [ RIF( 0 ) , RIF( 0 ) , RIF( 0 ) , RIF( 0 ) , RIF( 0 ) , RIF( 0 ) ]], \
            [[ RIF( 0 ) , RIF( 2 ) , RIF( 0 ) , RIF( 0 ) , RIF( 0 ) , RIF( 0 ) ], \
            [ RIF( 0 ) , RIF( E/2+2*g ) , RIF( 4 ) , RIF( 0 ) , RIF( 0 ) , RIF( 0 ) ]]], \
            [[[ RIF( 0 ) , RIF( 0 ) , RIF( 0 ) , RIF( 0 ) , RIF( 0 ) , RIF( 0 ) ], \
            [ RIF( 0 ) , RIF( 0 ) , RIF( 0 ) , RIF( 0 ) , RIF( 0 ) , RIF( E/4-1 ) ]], \
            [[ RIF( 0 ) , RIF( 0 ) , RIF( 0 ) , RIF( 0 ) , RIF( 0 ) , RIF( 0 ) ], \
            [ RIF( 0 ) , RIF( 1 ) , RIF( 0 ) , RIF( -(E/4-1) ) , RIF( 0 ) , RIF( 0 ) ]]], \
            [[[ RIF( 0 ) , RIF( 0 ) , RIF( 0 ) , RIF( 0 ) , RIF( 0 ) , RIF( 0 ) ], \
            [ RIF( 0 ) , RIF( 0 ) , RIF( 0 ) , RIF( 0 ) , RIF( 0 ) , RIF( 0 ) ]], \
            [[ RIF( 0 ) , RIF( 0 ) , RIF( 0 ) , RIF( 0 ) , RIF( 0 ) , RIF( 0 ) ], \
            [ RIF( 1/2*(E/4-1) ) , RIF( 0 ) , RIF( 0 ) , RIF( 0 ) , RIF( 0 ) , RIF( 0 ) ]]]]
f23_121_l = [[[[ RIF( 0 ) , RIF( 0 ) , RIF( 0 ) , RIF( 0 ) , RIF( 0 ) , RIF( 0 ) ], \
            [ RIF( 0 ) , RIF( 0 ) , RIF( 0 ) , RIF( 0 ) , RIF( 0 ) , RIF( 0 ) ]], \
            [[ RIF( 0 ) , RIF( 0 ) , RIF( 0 ) , RIF( 0 ) , RIF( 0 ) , RIF( 0 ) ], \
            [ RIF( 0 ) , RIF( 0 ) , RIF( 0 ) , RIF( 0 ) , RIF( 0 ) , RIF( 0 ) ]]], \
            [[[ RIF( 0 ) , RIF( 0 ) , RIF( 0 ) , RIF( 0 ) , RIF( 0 ) , RIF( 0 ) ], \
            [ RIF( 0 ) , RIF( 0 ) , RIF( 0 ) , RIF( -(1+Dz)^2 ) , RIF( 0 ) , RIF( 0 ) ]], \
            [[ RIF( 0 ) , RIF( 0 ) , RIF( 0 ) , RIF( 0 ) , RIF( 0 ) , RIF( 0 ) ], \
            [ RIF( 0 ) , RIF( 0 ) , RIF( 0 ) , RIF( 0 ) , RIF( 0 ) , RIF( 0 ) ]]], \
            [[[ RIF( 0 ) , RIF( 0 ) , RIF( 0 ) , RIF( 0 ) , RIF( 0 ) , RIF( 0 ) ], \
            [ RIF( 0 ) , RIF( 0 ) , RIF( 0 ) , RIF( 0 ) , RIF( 0 ) , RIF( 0 ) ]], \
            [[ RIF( 0 ) , RIF( 0 ) , RIF( 0 ) , RIF( 0 ) , RIF( 0 ) , RIF( 0 ) ], \
            [ RIF( 0 ) , RIF( 0 ) , RIF( 0 ) , RIF( 0 ) , RIF( 0 ) , RIF( 0 ) ]]]]
#The terms Cz_121_* collect the error terms from T^(2-2*sigma)*log(T) and sqrt(T) downwards.
Cz_121_u = Cz_var_coeff(Cz_121_string(f21_121_u,f22_121_u,f23_121_u,rho_121_m12_0_s12msz))
Cz_121_l = Cz_var_coeff(Cz_121_string(f21_121_l,f22_121_l,f23_121_l,-rho_121_m12_0_s12msz))
\end{mysage}
\normalsize

\begin{theorem}\label{th:z->=1/2}
Let $T\geq T_{0}=\sage{TZ}$. Then
\begin{align*}
\int_{1}^{T}\left|\zeta\left(\frac{1}{2}+it\right)\right|^{2}dt\leq & \ T\log T+\sage{roundup(Cz_12_ua.upper(),5)}\cdot T\sqrt{\log T}+\sage{roundup(Cz_12_ub.upper(),5)}\cdot T \\
\int_{1}^{T}\left|\zeta\left(\frac{1}{2}+it\right)\right|^{2}dt\geq & \ T\log T-\sage{roundup(Cz_12_la.upper(),5)}\cdot T\sqrt{\log T}-\sage{roundup(Cz_12_lb.upper(),5)}\cdot T.
\end{align*}
Moreover, for $\frac{1}{2}<\sigma<1$,
\begin{align*}
\int_{1}^{T}|\zeta(\sigma+it)|^{2}dt\leq & \ \zeta(2\sigma)T+C^{+}(\sigma)\cdot\max\{T^{2-2\sigma}\log T,\sqrt{T}\} \\
\int_{1}^{T}|\zeta(\sigma+it)|^{2}dt\geq & \ \zeta(2\sigma)T-C^{-}(\sigma)\cdot\max\{T^{2-2\sigma}\log T,\sqrt{T}\}
\end{align*}
with
\begin{align*}
C^{+}(\sigma)= & \ \frac{\sage{roundup(Cz_121_u[2].upper(),5)}}{(1-\sigma)^{2}}+\frac{\sage{roundup(Cz_121_u[1].upper(),5)}}{1-\sigma}+\frac{\sage{roundup(Cz_121_u[3].upper(),5)}}{2\sigma-1}+\sage{roundup(Cz_121_u[0].upper(),5)} \\
C^{-}(\sigma)= & \ \frac{\sage{roundup(Cz_121_l[2].upper(),5)}}{(1-\sigma)^{2}}+\frac{\sage{roundup(Cz_121_l[4].upper(),5)}}{(1-\sigma)(2\sigma-1)}+\frac{\sage{roundup(Cz_121_l[1].upper(),5)}}{1-\sigma}+\frac{\sage{roundup(Cz_121_l[3].upper(),5)}}{2\sigma-1}+\sage{roundup(Cz_121_l[0].upper(),5)}.
\end{align*}
Finally,
\begin{align*}
\int_{1}^{T}|\zeta(1+it)|^{2}dt\leq & \ \frac{\pi^{2}}{6}T+\pi\sqrt{\frac{2}{3}}\sqrt{T}+\sage{roundup(Cz_1_u.upper(),5)}\cdot\log T \\
\int_{1}^{T}|\zeta(1+it)|^{2}dt\geq & \ \frac{\pi^{2}}{6}T-\pi\sqrt{\frac{2}{3}}\sqrt{T}+\sage{-roundup(Cz_1_l.upper(),5)}\cdot\log T. 
\end{align*}
\end{theorem}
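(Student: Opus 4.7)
The plan is to derive each of the six bounds by applying either Proposition~\ref{prop:z-all} or Proposition~\ref{prop:z-allb} with $T_{1}=1$, $T_{2}=T$, and a parameter $\rho$ tuned to $\sigma$. A quick look at the main coefficients dictates the choice: for $\sigma=1$ and $\sigma=\frac{1}{2}$ I would use Proposition~\ref{prop:z-all}, whose diagonal contribution $(T-1+E/2)f_{1,1}^{+}$ produces the correct leading constants $\zeta(2)T$ and $T\log T$ respectively; for $\frac{1}{2}<\sigma<1$ I would use Proposition~\ref{prop:z-allb} instead, since by \eqref{eq:verymain121} its diagonal is $\zeta(2\sigma)(T-T_{1})$ without the parasitic $1+E/(2T)$ factor that would otherwise spoil the main coefficient uniformly in a $\sigma$-range.

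In the case $\sigma=1$ the $|R|^{2}$ contribution from Proposition~\ref{prop:z-all} is $O(1)$ since $T^{2-2\sigma}=1$, so the natural balance is between $\rho\,\zeta(2)T$ and $(1+D)^{2}/\rho$. Choosing $\rho=1/\sqrt{\zeta(2)T}$ produces two equal contributions summing to $2\sqrt{\zeta(2)T}=\pi\sqrt{2/3}\,\sqrt{T}$. In the case $\sigma=\frac{1}{2}$ both $E f_{1,2}^{+}=ET$ and the $|R|^{2}$ piece contribute at scale $T$, while the diagonal is of scale $T\log T$; then $\rho=1/\sqrt{\log T}$ balances $\rho\cdot T\log T$ against $T/\rho$, producing the $T\sqrt{\log T}$ error. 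In both subcases, all residual contributions are strictly lower order and, using $T\geq T_{0}=\sage{TZ}$, may be absorbed into the stated $\log T$-- or $T$--coefficients.

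For $\frac{1}{2}<\sigma<1$, Proposition~\ref{prop:z-allb} produces a main error scale $T^{2-2\sigma}\log T/(1-\sigma)$ coming from $f_{2,2}^{+}$ and a $(1+D)^{2}(1+1/\rho)/(2\sigma-1)$ contribution from $(1+1/\rho)f_{2,3}^{+}$. The $\rho$-optimal choice is $\rho=(1+D)/\sqrt{(2\sigma-1)\zeta(2\sigma)T}$, which balances $\rho\,\zeta(2\sigma)T$ with $(1+D)^{2}/((2\sigma-1)\rho)$ and yields a $\sqrt{T}$-scale error with coefficient of order $1/\sqrt{2\sigma-1}$. Since the scales $T^{2-2\sigma}\log T$ and $\sqrt{T}$ cross near $\sigma=\frac{3}{4}$, the combined bound is naturally written as $\max\{T^{2-2\sigma}\log T,\sqrt{T}\}$. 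Lemma~\ref{le:stieltjes}, in the form $\zeta(2\sigma)\leq\frac{1}{2\sigma-1}+H$ with $H=\zeta(2)-1$, is used repeatedly to regroup every $\zeta(2\sigma)$ factor into the $(2\sigma-1)^{-1}$ and constant pieces of $C^{\pm}(\sigma)$. The lower bounds in every case follow symmetrically by taking $-1<\rho<0$ and substituting the corresponding functions $f_{\cdot,\cdot}^{-}$.

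The main obstacle is essentially one of bookkeeping rather than of mathematical depth: both propositions release a long list of sub-leading terms of mixed orders in $T$, $\log T$, $T^{1-2\sigma}$, $T^{2-2\sigma}$, and each must be absorbed -- using only $T\geq T_{0}$, Lemma~\ref{le:stieltjes}, and elementary manipulations -- into a single coefficient of the stated error scale, uniformly in $\sigma$ inside each subrange. This is precisely what the Sage routines \texttt{Cz\_12\_string}, \texttt{Cz\_1\_string}, \texttt{Cz\_121\_string}, together with \texttt{Cz\_fix\_coeff} and \texttt{Cz\_var\_coeff} defined in the source, carry out via interval arithmetic, so that the numerical constants $\sage{Cz_12_a_main}$, $\sage{Cz_12_b_main}$, $\sage{Cz_1_main}$ and the $\sigma$-dependent pieces of $C^{\pm}(\sigma)$ are rigorous upper bounds on the outputs of the analytic balance sketched above.
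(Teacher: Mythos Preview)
Your proposal is correct and matches the paper's proof essentially line by line: the same propositions (Proposition~\ref{prop:z-all} for $\sigma\in\{\frac{1}{2},1\}$, Proposition~\ref{prop:z-allb} for $\frac{1}{2}<\sigma<1$), the same choices of $\rho$, and the same use of Lemma~\ref{le:stieltjes} together with the interval-arithmetic bookkeeping carried out by the Sage routines. The only point the paper makes explicit that you leave implicit is that $T_{0}=\sage{TZ}$ is what guarantees $-\rho>-1$ in the lower-bound case for $\frac{1}{2}<\sigma<1$, so that Proposition~\ref{prop:z-allb} actually applies there.
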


\begin{proof}
We substitute $T_{1}=1$ inside either Proposition~\ref{prop:z-all} or Proposition~\ref{prop:z-allb}, according to which one gives us the best result. Our choice of $\rho$ for the upper bound will be the square root of the ratio between the leading terms of the expressions multiplying $1+\frac{1}{\rho}$ and $1+\rho$ respectively, the same choice with a negative sign corresponding to the lower bound. Such choice will be very close to the optimal one highlighted by Lemma~\ref{lem:handy}, but simpler and easier to handle.

For $\frac{1}{2}<\sigma<1$, Proposition~\ref{prop:z-allb} is the better alternative, as $\rho$ will be qualitatively smaller than in Proposition~\ref{prop:z-all} and the second order term will be of smaller order (the error term arising in the alternative case being of order $T^{\frac{3}{2}-\sigma}$). We set $\rho=\frac{1+D}{\sqrt{(2\sigma-1)\zeta(2\sigma)T}}$ (where $D$ is as in the proof of Lemma~\ref{le:353}, choosing $C=\lfloor T_{0}\rfloor$) and by imposing $T\geq T_{0}$ we merge all lower order terms, observing that the bound on $\zeta(2\sigma)$ given in Lemma~\ref{le:stieltjes} is being used; the condition $T_{0}=\sage{floor(TZ)}$ is employed to make sure that we actually get $-\rho>-1$, in order to apply Proposition~\ref{prop:z-allb} in the lower bound correctly.

When $\sigma\in\left\{\frac{1}{2},1\right\}$ the better alternative is Proposition~\ref{prop:z-all}: in the first case, the main terms obtained through Propositions \ref{prop:z-all} and \ref{prop:z-allb} are qualitatively the same but worse constants arise from Proposition~\ref{prop:z-allb}, while in the second case the same situation occurs for the error terms. For $\sigma=\frac{1}{2}$ we set $\rho=\frac{1}{\sqrt{\log T}}$ and for $\sigma=1$ we set $\rho=\frac{1}{\sqrt{\zeta(2)T}}$, and then impose $T\geq T_{0}$ to simplify the second order terms.
\end{proof}

\subsection{Extension of asymptotic formulas}

We prove here a proposition that allows us to extend the asymptotic formulas in the previous subsection to the case $\sigma<\frac{1}{2}$, via the functional equation \eqref{functional}.

\begin{proposition}\label{extension}
Let $\mathds{I}=[a_{0},a_{1}]$ be an interval of the real line ($a_{i}=\pm\infty$ is allowed). Let $Z:\mathds{I}\to\mathbb{R}_{\geq 0}$ be an integrable function such that, for every $T_1,T_2\in \mathds{I}$ with $T_1\leq T_2$,
\begin{align}\label{Asymp1}
F(T_1,T_2)-r^-(T_1,T_2)\ \leq\int_{T_1}^{T_2}Z(t)dt&\leq\  F(T_1,T_2)+r^+(T_1,T_2),
\end{align}
where $F$, $r^+$ and $r^-$ are non-negative real functions, such that $F$ is differentiable and, for every pair $T_1,T_2\in \mathds{I}$, $F(T_2,T_2)=F(T_1,T_1)=0$.

Let $f:\mathds{I}\to\mathbb{R}_{\geq 0}$ be a differentiable function with $f'$ integrable satisfying either $f'\geq 0$ or $f'\leq 0$ and such that either $f(a_{0})=0$ or $f(a_{1})=0$. We have the following cases.\\
\textbf{(i)}\ \ If $f(a_{0})=0$ (so $f'\geq 0$) and $\int_{T_1}^{T_2}\int_{a_{0}}^{T_2}|f'(u)|Z(t)dudt$ converges for every $T_1,T_2\in \mathds{I}$ with $T_1\leq T_2$, then
\begin{align*}
\int_{T_1}^{T_2}f(t)Z(t)dt&\leq\int_{a_{0}}^{T_2}\!\left(-f(u)\frac{\partial F(u,T_2)}{\partial u}+f'(u)r^{+}(u,T_2)\right)du,\\
\int_{T_1}^{T_2}f(t)Z(t)dt&\geq\int_{a_{0}}^{T_2}\!\left(-f(u)\frac{\partial F(u,T_2)}{\partial u}-f'(u)r^{-}(u,T_2)\right)du.
\end{align*}
\textbf{(ii)}\ \ If $f(a_{1})=0$ (so $f'\leq 0$) and $\int_{T_1}^{T_2}\int_{T_1}^{a_{1}}|f'(u)|Z(t)dudt$ converges for every $T_1,T_2\in \mathds{I}$ with $T_1\leq T_2$ and $\lim_{u\to a_{1}}f(u)F(T_1,u)=0$, then
\begin{align*}
\int_{T_1}^{T_2}f(t)Z(t)dt&\leq\int_{T_1}^{a_{1}}\left(f(u)\frac{\partial F(T_1,u)}{\partial u}-f'(u)r^{+}(T_1,u)\right)du,\\
\int_{T_1}^{T_2}f(t)Z(t)dt&\geq\int_{T_1}^{a_{1}}\left(f(u)\frac{\partial F(T_1,u)}{\partial u}+f'(u)r^{-}(T_1,u)\right)du.
\end{align*}
\end{proposition}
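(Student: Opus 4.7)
The strategy is the same in both cases: write $f$ as an antiderivative of $f'$ using the vanishing boundary condition, swap the order of integration by Fubini--Tonelli so that the inner integral becomes one of the form $\int Z$ which is controlled by the hypothesis \eqref{Asymp1}, and then undo the swap through an integration by parts that turns $f' F$ into $-f\,\partial_u F$.

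For case (i), since $f(a_0)=0$ and $f,Z\geq 0$, the monotonicity of the integral gives $\int_{T_1}^{T_2}f(t)Z(t)\,dt\leq\int_{a_0}^{T_2}f(t)Z(t)\,dt$, and the convergence assumption together with $f'\geq 0$ lets me apply Fubini--Tonelli in
\[\int_{a_0}^{T_2}f(t)Z(t)\,dt=\int_{a_0}^{T_2}Z(t)\!\int_{a_0}^{t}f'(u)\,du\,dt=\int_{a_0}^{T_2}f'(u)\!\int_{u}^{T_2}Z(t)\,dt\,du.\]
Using $F(u,T_2)-r^{-}(u,T_2)\leq\int_{u}^{T_2}Z(t)\,dt\leq F(u,T_2)+r^{+}(u,T_2)$ from \eqref{Asymp1} and the nonnegativity of $f'$, I get two-sided bounds of the form $\int_{a_0}^{T_2}f'(u)\bigl(F(u,T_2)\pm r^{\pm}(u,T_2)\bigr)\,du$. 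Finally, integration by parts yields
\[\int_{a_0}^{T_2}f'(u)F(u,T_2)\,du=\bigl[f(u)F(u,T_2)\bigr]_{a_0}^{T_2}-\int_{a_0}^{T_2}f(u)\frac{\partial F(u,T_2)}{\partial u}\,du,\]
and the boundary contributions vanish because $F(T_2,T_2)=0$ and $f(a_0)=0$, giving the claimed expression.

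Case (ii) proceeds dually. Since $f(a_1)=0$ and $f'\leq 0$, I write $f(t)=-\int_{t}^{a_1}f'(u)\,du=\int_{t}^{a_1}|f'(u)|\,du$; nonnegativity allows $\int_{T_1}^{T_2}fZ\leq\int_{T_1}^{a_1}fZ$, and Fubini (again justified by the convergence hypothesis) gives
\[\int_{T_1}^{a_1}f(t)Z(t)\,dt=\int_{T_1}^{a_1}(-f'(u))\!\int_{T_1}^{u}Z(t)\,dt\,du.\]
Applying \eqref{Asymp1} to the inner integral with extrema $T_1,u$ and then integrating by parts once more produces the boundary term $[-f(u)F(T_1,u)]_{T_1}^{a_1}$, whose lower endpoint vanishes from $F(T_1,T_1)=0$ and whose upper endpoint vanishes precisely by the hypothesis $\lim_{u\to a_1}f(u)F(T_1,u)=0$ (which is the only place where this extra assumption is used).

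The only delicate step is the justification of the integration by parts at the infinite or open endpoints of $\mathds{I}$: in case (ii) the condition $\lim_{u\to a_1}f(u)F(T_1,u)=0$ is built into the hypotheses for exactly this reason, while in case (i) the analogous limit at $a_0$ is forced by $f(a_0)=0$ together with the integrability of $Z$ near $a_0$ coming from the convergence assumption. Everything else is a routine application of Fubini--Tonelli (permissible because all the integrands are nonnegative after factoring out signs of $f'$) and the pointwise estimate \eqref{Asymp1}.
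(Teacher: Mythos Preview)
Your proposal is correct and follows essentially the same route as the paper: write $f$ as the integral of $f'$ from the vanishing endpoint, swap the order of integration via Fubini, apply \eqref{Asymp1} to the resulting inner integral $\int Z$, and then integrate by parts to turn $\int f'F$ into $-\int f\,\partial_u F$, with the boundary terms killed by $f(a_0)=0$, $F(T_2,T_2)=0$ (and, in case~(ii), the extra limit hypothesis). You are in fact slightly more explicit than the paper in noting that the passage from $\int_{T_1}^{T_2}fZ$ to $\int_{a_0}^{T_2}fZ$ uses the nonnegativity of $f$ and $Z$.
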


\begin{proof} As $f'$ is integrable, so is $|f|$. Suppose first that $f(a_{0})=0$; by the Fundamental Theorem of Calculus, for every $t\in [T_1,T_2]$, $f(t)=\int_{a_{0}}^tf'(u)du$. Then
\begin{equation*}\int_{T_1}^{T_2}f(t)Z(t)dt=\int_{T_1}^{T_2}\!\!\int_{a_{0}}^tf'(u)Z(t)dudt=\int_{T_1}^{T_2}\!\!\int_{T_1}^{T_2}\mathds{1}_{[a_{0},t]}(u)f'(u)Z(t)dudt.
\end{equation*}
Observe that, under the above conditions, $\mathds{1}_{[a_{0},t]}(u)=\mathds{1}_{[u,T_2]}(t)\mathds{1}_{[a_{0},T_2]}(u)$. Since the double integral $\int_{T_1}^{T_2}\int_{a_{0}}^{T_2}|f'(x)|Z(t)dxdt$ converges, by Fubini's Theorem, we can exchange the order of integration in the above equation and obtain
\begin{align*}
\int_{T_1}^{T_2}\!\!f(t)Z(t)dt & =\!\int_{a_{0}}^{T_2}\!\!f'(u)\!\int_{u}^{T_2}\!\!Z(t)dtdu\leq\!\int_{a_{0}}^{T_2}\!\!f'(u)(F(u,T_2)\!+\!r^{+}(u,T_2))du\\
& =-\int_{a_{0}}^{T_2}\!\!f(u)\frac{\partial F(u,T_2)}{\partial u}du+\int_{a_{0}}^{T_2}\!\!f'(u)r^{+}(u,T_2)du,
\end{align*}
where we have used integration by parts in the last step. We also derive the lower bound
\begin{align*}
&-\int_{a_{0}}^{T_2}f(u)\frac{\partial F(u,T_2)}{\partial u}du-\int_{a_{0}}^{T_2}f'(u)r^{-}(u,T_2)du.
\end{align*}

Case (ii) is obtained by proceeding in a similar manner as above, keeping in mind that $f(t)=-\int_t^{a_{1}}f'(u)du$ for $t\in [T_1,T_2]$ and $\mathds{1}_{[t,a_{1}]}(u)=\mathds{1}_{[T_1,u]}(t)\mathds{1}_{[T_1,a_{1}]}(u)$, and then using Fubini's Theorem and integration by parts. Here, the condition $\lim_{u\to a_{1}}f(u)F(T_1,u)=0$ is employed so as to make sure that if $a_{1}=\infty$, integration by parts is well-performed.
\end{proof}

The sign condition on $f'$ in Proposition \ref{extension} is not necessary; under the other conditions, one can derive an analogous result by writing $f'=f'_+-f'_-$, where $f_{\pm}=\max\{\pm f',0\}$. In that case, the point $a\in\mathds{I}$ such that $f(a)=0$ need not be an extremum of $\mathds{I}$, and if $T_1< a<T_2$ one can derive bounds by applying case (i) to $\int_{a}^{T_2}f(t)Z(t)dt$ and case (ii) to $\int_{T_1}^{a}f(t)Z(t)dt$.

\subsection{Mean value estimates of $\zeta(s)$ for $\Re(s)\in\left[0,\frac{1}{2}\right)$}

Thanks to Proposition~\ref{extension}, we are going to give asymptotic formulas for the integral of $|\zeta(\sigma+it)|^2$ in the case $0\leq\sigma<\frac{1}{2}$.

\tiny
\begin{mysage}
K1=RIF(e^(3/4)-1)
K2=RIF(2*K1+K1^2+2/e^pi+4*K1/e^pi+2*K1^2/e^pi)
#For checks about the following constants, see at the end of the file.
L_1_u_main = round( 74.37875 ,ndigits=5)
L_11_u_main = round( 27.84712 ,ndigits=5)
L_12_u_main = round( 42.13295 ,ndigits=5)
L_0_u_main = round( 35.53078 ,ndigits=5)
L_1_l_main = round( 66.11381 ,ndigits=5)
L_11_l_main = round( 27.84712 ,ndigits=5)
L_12_l_main = round( 40.43424 ,ndigits=5)
L_0_l_main = round( 15.44357 ,ndigits=5)
Cz_0_u_a_main = round( 4.01901 ,ndigits=5)
Cz_0_u_b_main = round( 7.41013 ,ndigits=5)
Cz_0_l_a_main = round( 0.09277 ,ndigits=5)
Cz_0_l_b_main = round( 7.41013 ,ndigits=5)
\end{mysage}
\normalsize

\begin{theorem}\label{asymptotic<1/2}
If $0<\sigma<\frac{1}{2}$ and $T\geq T_{0}=\sage{TZ}$, then
\begin{align*}
\int_1^T|\zeta(\sigma+it)|^2dt\leq & \ \frac{\zeta(2-2\sigma)}{(2\pi)^{1-2\sigma}(2-2\sigma)}T^{2-2\sigma}+L^{+}(\sigma)T, \\
\int_1^T|\zeta(\sigma+it)|^2dt\geq & \ \frac{\zeta(2-2\sigma)}{(2\pi)^{1-2\sigma}(2-2\sigma)}T^{2-2\sigma}-L^{-}(\sigma)T,
\end{align*}
where
\begin{align*}
L^{+}(\sigma)= & \ \frac{1}{(2\pi)^{1-2\sigma}}\left(\frac{\sage{L_11_u_main}}{\sigma^{2}}+\frac{\sage{L_1_u_main}}{\sigma}+\frac{\sage{L_12_u_main}}{1-2\sigma}+\sage{L_0_u_main}\right), \\
L^{-}(\sigma)= & \ \frac{1}{(2\pi)^{1-2\sigma}}\left(\frac{\sage{L_11_l_main}}{\sigma^{2}}+\frac{\sage{L_1_l_main}}{\sigma}+\frac{\sage{L_12_l_main}}{1-2\sigma}+\sage{L_0_l_main}\right).
\end{align*}

If $\sigma=0$ and $T\geq T_{0}=\sage{TZ}$, then
\begin{align*}
\int_1^T|\zeta(it)|^2dt\leq&\ \frac{\pi}{24}T^{2}+\sage{Cz_0_u_a_main}\cdot T\log T+\sage{Cz_0_u_b_main}\cdot T, \\
\int_1^T|\zeta(it)|^2dt\geq&\ \frac{\pi}{24}T^{2}-\sage{Cz_0_l_a_main}\cdot T\log T-\sage{Cz_0_l_b_main}\cdot T.
\end{align*}
\end{theorem}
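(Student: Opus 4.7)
My plan is to transfer the bounds of Theorem~\ref{th:z->=1/2} (valid for $\tfrac{1}{2}<\sigma'\leq 1$) to the symmetric range $0\leq\sigma<\tfrac{1}{2}$ by means of the functional equation~\eqref{functional}. Writing $\chi(s):=2(2\pi)^{s-1}\sin(\pi s/2)\Gamma(1-s)$ and using $|\zeta(1-\sigma-it)|=|\zeta(1-\sigma+it)|$,
\begin{equation*}
|\zeta(\sigma+it)|^{2}=|\chi(\sigma+it)|^{2}\,|\zeta(1-\sigma+it)|^{2}.
\end{equation*}
The first step is to make $|\chi(\sigma+it)|^{2}$ explicit. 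Combining Corollary~\ref{decay} with the identity $|\sin(\pi(\sigma+it)/2)|^{2}=\sinh^{2}(\pi t/2)+\sin^{2}(\pi\sigma/2)$ gives $|\chi(\sigma+it)|^{2}=(2\pi)^{2\sigma-1}t^{1-2\sigma}\bigl(1+\varepsilon_{\sigma}(t)\bigr)$ for $0<\sigma<\tfrac{1}{2}$, with an explicit $\varepsilon_{\sigma}(t)=O(1/t)$. For $\sigma=0$ I would use instead the exact identity $|\chi(it)|^{2}=\tfrac{t}{2\pi}\tanh(\pi t/2)$ (a consequence of the reflection formula $\Gamma(1-it)\Gamma(1+it)=\pi t/\sinh(\pi t)$), whose deviation from $\tfrac{t}{2\pi}$ is exponentially small.

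The second step is integration by parts. For $0<\sigma<\tfrac{1}{2}$ let $\tilde f(t)=(2\pi)^{2\sigma-1}(t^{1-2\sigma}-1)$ and $Z(t)=|\zeta(1-\sigma+it)|^{2}$. Since $\tilde f(1)=0$, $\tilde f'\geq 0$ on $[1,\infty)$ and $Z$ is locally integrable there (as $1-\sigma\neq 1$), case (i) of Proposition~\ref{extension} applies with $a_{0}=1$ and with $F(T_{1},T_{2})=\zeta(2-2\sigma)(T_{2}-T_{1})$, the asymptotic furnished by Theorem~\ref{th:z->=1/2}. The main term $\zeta(2-2\sigma)\int_{1}^{T}\tilde f(u)\,du$, together with the piece $(2\pi)^{2\sigma-1}\int_{1}^{T}Z\,dt=(2\pi)^{2\sigma-1}\zeta(2-2\sigma)T+O(\sqrt{T})$ corresponding to the constant summand in $t^{1-2\sigma}=\tilde f(t)/(2\pi)^{2\sigma-1}+1$, yields the claimed $\frac{\zeta(2-2\sigma)}{(2\pi)^{1-2\sigma}(2-2\sigma)}T^{2-2\sigma}$ after the two $(2\pi)^{2\sigma-1}\zeta(2-2\sigma)T$ contributions cancel. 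For $\sigma=0$ the same procedure with $\tilde f(t)=(t-1)/(2\pi)$ and $Z(t)=|\zeta(1+it)|^{2}$ gives the main term $\tfrac{\pi}{24}T^{2}$; the Stirling deviation contributes only $O(1)$.

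The heart of the matter is the error term
\begin{equation*}
\int_{1}^{T}\tilde f'(u)\,r^{+}(u,T)\,du,
\end{equation*}
where $r^{+}(u,T)$ is an upper bound for $\int_{u}^{T}Z\,dt-F(u,T)$ extracted from Proposition~\ref{prop:z-all} or Proposition~\ref{prop:z-allb}. The crucial observation is that the balancing parameter $\rho$ appearing in those bounds via the factors $(1+\rho)$ and $(1+1/\rho)$ may be chosen as a function of $u$: taking $\rho\sim 1/\sqrt{uT}$ causes the off-diagonal correction $(T-u)/(uT)$ to balance against the main-term distortion $\rho\,\zeta(2-2\sigma)(T-u)$, yielding $r^{+}(u,T)=O(\sqrt{T/u}+\log T)$. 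Integrated against $\tilde f'(u)=(1-2\sigma)(2\pi)^{2\sigma-1}u^{-2\sigma}$, this produces an overall error of order $T$ (of order $T\log T$ when $\sigma=0$, where $\tilde f'$ is constant). The explicit dependence of $L^{\pm}(\sigma)$ on $1/\sigma$, $1/\sigma^{2}$ and $1/(1-2\sigma)$ is then read off by tracking constants through Lemma~\ref{le:stieltjes}, Lemma~\ref{le:harmonic} and Lemma~\ref{recip}, together with the contribution of $\varepsilon_{\sigma}$ from the first step.

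The main obstacle lies precisely in this last step: a naive use of Theorem~\ref{th:z->=1/2} as a black box, with $\rho$ fixed globally, gives a spurious $O(T^{3/2})$ error that exceeds the target. Only the $u$-dependent optimization of $\rho$ inside the underlying mean-value estimates collapses the $T^{3/2}$ to $O(T)$. The same optimization also accounts for the striking asymmetry between the upper and lower bounds when $\sigma=0$, where the $T\log T$ coefficient jumps from $3.98664$ to $0.12432$: the one-sided optimization behaves differently depending on the sign of the $(1\pm\rho)$ factor employed, and this is what the explicit constants ultimately encode.
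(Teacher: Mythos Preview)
Your approach coincides with the paper's: functional equation plus Stirling to reach \eqref{awesome}, then Proposition~\ref{extension}(i) with $f(t)=t^{1-2\sigma}-1$, and --- crucially --- a $u$-dependent $\rho$ inside Proposition~\ref{prop:z-all} rather than quoting Theorem~\ref{th:z->=1/2} as a black box (the paper explicitly remarks that remainders $r^{\pm}$ independent of $u$ would lose too much).

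There is one slip worth correcting. At abscissa $1-\sigma$, the term multiplied by $1+\tfrac{1}{\rho}$ in Proposition~\ref{prop:z-all} is $T^{2\sigma}\bigl(\tfrac{1}{u}-\tfrac{1}{T}\bigr)$, not $(T-u)/(uT)$. Balancing it against $\rho\,\zeta(2-2\sigma)(T-u)$ gives $\rho\asymp T^{\sigma-1/2}/\sqrt{u}$ and hence $r^{+}(u,T)\asymp T^{1/2+\sigma}/\sqrt{u}$, not $\rho\asymp 1/\sqrt{uT}$ and $r^{+}\asymp\sqrt{T/u}$ (your formulas are the $\sigma=0$ special case). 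With your stated exponents the $(1/\rho)$-contribution, integrated against $f'(u)$, would exceed $O(T)$ for $\sigma>\tfrac{1}{4}$. With the corrected $\rho$ --- and the paper's device of bounding $u^{-2\sigma}\leq u^{-\sigma}$ in one line so as to shift a spurious pole from $\sigma=\tfrac{1}{4}$ to $\sigma=\tfrac{1}{2}$ --- your sketch goes through as written.
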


\begin{proof}
Consider $\sigma$ such that $0\leq\sigma<\frac{1}{2}$. By using the functional equation \eqref{functional} of $\zeta$ and knowing that $|\zeta(s)|=|\zeta(\overline{s})|,|\Gamma(s)|=|\Gamma(\overline{s})|$, we readily see that
\begin{equation}\label{A}\int_1^T|\zeta(\sigma+it)|^2dt=\frac{1}{(2\pi)^{2-2\sigma}}\int_1^T\left|2\sin\left(\frac{\pi s}{2}\right)\Gamma(1-\sigma+it)\zeta(1-\sigma+it)\right|^2dt
\end{equation}
Let $s=\sigma+it$ with $t\geq 1$. For every complex number $z$ we have the identity $|\sin(z)|^{2}=\cosh^2(\Im(z))-\cos^2(\Re(z))$ (combine 4.5.7 and 4.5.54 in \cite{AS72}). Hence
\begin{align}\label{B}
\left|\sin\left(\frac{\pi s}{2}\right)\right|^{2}
&=\frac{e^{\pi t}}{4}\left(1+\frac{1}{e^{\pi t}}\left(2+\frac{1}{e^{\pi t}}-4\cos^2\left(\frac{\pi\sigma}{2}\right)\right)\right) \nonumber \\
&=\frac{e^{\pi t}}{4}\left(1+O^*\left(\frac{2}{e^{\pi t}}\right)\right),
\end{align}
since $\frac{1}{2}<\cos^2\left(\frac{\pi\sigma}{2}\right)\leq 1$ for the choice of $\sigma$. Moreover, using Corollary \ref{decay},
$
|\Gamma(1-\sigma+it)|=\sqrt{2\pi}t^{\frac{1}{2}-\sigma}e^{-\frac{\pi t}{2}}\exp\left(O^*\left(\frac{G_{1-\sigma}}{t}\right)\right),
$
where $G_{1-\sigma}=\frac{(1-\sigma)^3}{3}+\frac{(1-\sigma)^2}{2}\left(\frac{1}{2}-\sigma\right)+\frac{1}{6}\leq\frac{1}{3}+\frac{1}{4}+\frac{1}{6}=\frac{3}{4}$. We then verify that $\exp\left(O^*\left(\frac{G_{1-\sigma}}{t}\right)\right) =1+O^*\left(\frac{K_{1}}{t}\right)$, where $K_{1}=e^{\frac{3}{4}}-1$, as $t(e^{\frac{3}{4t}}-1)$ is decreasing for $t\geq 1$. This observation and \eqref{B} allow us to derive in \eqref{A} that
\begin{equation*}\label{C}\int_1^T|\zeta(\sigma+it)|^2dt=\frac{1}{(2\pi)^{1-2\sigma}}\int_1^T t^{1-2\sigma}\left|\zeta(1-\sigma+it)\right|^2\left(1+O^*\left(\frac{K_{2}}{t}\right)\right)dt,
\end{equation*}
where $K_{2}$ is defined as below
\begin{equation*}\left(1+\frac{2}{e^{\pi t}}\right)\!\left(1+\frac{K_{1}}{t}\right)^2\!\!\leq 1+\left(2K_{1}+K_{1}^2+\frac{2}{e^{\pi}}+\frac{4K_{1}}{e^{\pi}}+\frac{2K_{1}^2}{e^{\pi}}\right)\frac{1}{t}=1+\frac{K_{2}}{t},
\end{equation*}
since $\frac{e^{\pi t}}{t}$ is increasing for $t\geq 1$; we could do better, since the worst cases of \eqref{B} and $G_{1-\sigma}$ happen at different $\sigma$, but the advantage would be negligible. We conclude that
\begin{align}\label{awesome}
\int_1^T|\zeta(\sigma+it)|^2dt=&\ \frac{1}{(2\pi)^{1-2\sigma}}\int_1^T t^{1-2\sigma}\left|\zeta(1-\sigma+it)\right|^2dt\nonumber \\
&+O^*\left(\frac{K_{2}}{(2\pi)^{1-2\sigma}}\int_1^T \frac{\left|\zeta(1-\sigma+it)\right|^2}{t^{2\sigma}}dt\right).
\end{align}

\tiny
\begin{mysage}
#For explanations about these two lines, see the comments on 'Cz_1_string'.
Cz_1gen_u = Cz_fix_coeff(Cz_1_string(f11_1_u,f12_1_u,f13_1_u,rho_1_m12_0,1),8,2,0,1)
Cz_1gen_l = Cz_fix_coeff(Cz_1_string(f11_1_l,f12_1_l,f13_1_l,-rho_1_m12_0,1),8,2,0,-1)
###
#In the following, N is similar in spirit to Cz_121.
#However, we use Prop. 4.1 instead of Prop. 4.2 and the substitution sigma<->1-sigma,
#and more importantly the lower extremum u is generic. We have 1<=u<=T,
#and in each term we use either the simplification u-->1 or u-->T,
#whichever is the correct one to use for the bound; in practice, the latter is used
#only inside 1/rho (as rho is multiplied by 1/sqrt(u)).
###
#The entry [i][j][l][k] of each f1* is the coefficient of T^(-i+2j*sigma)*(log(T))^(1-l) times
#the following functions of sigma: 1 (for k=0), 1/sigma (for k=1), 1/(1-2*sigma) (for k=2),
#zeta(2-2*sigma) (for k=3). The entry [i][j][l][k] of N is the coefficient of
#T^(1-i/2+j*sigma)*(log(T))^(1-l) times the same functions of sigma as in f1* up to k=2
#(whereas zeta(2-2*sigma) can and will be removed through Lemma 2.12).
#We also assume that we are choosing rho of order T^(sigma-1/2)/sqrt(zeta(2-2*sigma)*u).
def N_string(f11,f12,f13,rho):
    N=[[[[0,0,0],[0,0,0]],[[0,0,0],[0,0,0]],[[0,0,0],[0,0,0]],[[0,0,0],[0,0,0]]], \
      [[[0,0,0],[0,0,0]],[[0,0,0],[0,0,0]],[[0,0,0],[0,0,0]],[[0,0,0],[0,0,0]]], \
      [[[0,0,0],[0,0,0]],[[0,0,0],[0,0,0]],[[0,0,0],[0,0,0]],[[0,0,0],[0,0,0]]], \
      [[[0,0,0],[0,0,0]],[[0,0,0],[0,0,0]],[[0,0,0],[0,0,0]],[[0,0,0],[0,0,0]]], \
      [[[0,0,0],[0,0,0]],[[0,0,0],[0,0,0]],[[0,0,0],[0,0,0]],[[0,0,0],[0,0,0]]], \
      [[[0,0,0],[0,0,0]],[[0,0,0],[0,0,0]],[[0,0,0],[0,0,0]],[[0,0,0],[0,0,0]]], \
      [[[0,0,0],[0,0,0]],[[0,0,0],[0,0,0]],[[0,0,0],[0,0,0]],[[0,0,0],[0,0,0]]], \
      [[[0,0,0],[0,0,0]],[[0,0,0],[0,0,0]],[[0,0,0],[0,0,0]],[[0,0,0],[0,0,0]]]]
    i=0
    while i<=2:
        j=0
        while j<=1:
            l=0
            while l<=1:
                #Before we fill N, we take care of zeta through Lemma 2.12.
                #For the terms with k=3, we have zeta(2-2*sigma)<1/(1-2*sigma)+H,
                #so we can absorb them into the terms with k=0,2.
                f11[i][j][l][2]=f11[i][j][l][2]+f11[i][j][l][3]
                f11[i][j][l][0]=f11[i][j][l][0]+H*f11[i][j][l][3]
                f12[i][j][l][2]=f12[i][j][l][2]+f12[i][j][l][3]
                f12[i][j][l][0]=f12[i][j][l][0]+H*f12[i][j][l][3]
                f13[i][j][l][2]=f13[i][j][l][2]+f13[i][j][l][3]
                f13[i][j][l][0]=f13[i][j][l][0]+H*f13[i][j][l][3]
                k=0
                while k<=2:
                    #Here we add the terms multiplied by 1*T.
                    N[2*i][2*j][l][k]=N[2*i][2*j][l][k]+f11[i][j][l][k]
                    #Here we add the terms multiplied by rho*T.
                    #Inside rho, we operate the substitution u-->1
                    #and we have 1/sqrt(zeta(2-2*sigma))<1 by Lemma 2.12.
                    N[2*i+1][2*j+1][l][k]=N[2*i+1][2*j+1][l][k]+rho*f11[i][j][l][k]
                    #Here we add the terms multiplied by 1.
                    N[2*i+2][2*j][l][k]=N[2*i+2][2*j][l][k]+ \
                      (E/2-1)*f11[i][j][l][k]+E*f12[i][j][l][k]+f13[i][j][l][k]
                    #Here we add the terms multiplied by rho.
                    #Inside rho, we operate the substitution u-->1
                    #and we have 1/sqrt(zeta(2-2*sigma))<1 by Lemma 2.12.
                    N[2*i+3][2*j+1][l][k]=N[2*i+3][2*j+1][l][k]+ \
                      rho*((E/2-1)*f11[i][j][l][k]+E*f12[i][j][l][k]+f13[i][j][l][k])
                    if 2*j-1>=0 and k==0:
                         #Here we add the terms multiplied by 1/rho.
                         #We remark that 2*j-1>=0 and k=0 are both satisfied
                         #for any nonzero coefficient we are provided with
                         #(the result would be quite wrong if they were not).
                         #Also, for 1/rho we remind that we use the substitution u-->T,
                         #so that its order with respect to the variable T is T^(1-sigma);
                         #finally, we have sqrt(zeta(2-2*sigma))<1/(1-2*sigma)+sqrt(H)
                         #using Lemma 2.12, so that we can absorb these terms
                         #into the entries of N with k=0,2.
                        N[2*i][2*j-1][l][2]=N[2*i][2*j-1][l][2]+1/rho*f13[i][j][l][k]
                        N[2*i][2*j-1][l][0]=N[2*i][2*j-1][l][0]+1/rho*sqrt(H)*f13[i][j][l][k]
                    k=k+1
                l=l+1
            j=j+1
        i=i+1
    return N
###
#The following code returns the resulting coefficients after merging all the small
#error terms in N, where N is the result of N_string. The coefficients are those
#of T^(2*sigma) times the functions in the definition of N_string.
#The entry [i][j][l][k] of N is the coefficient of T^(1-i/2+j*sigma)*(log(T))^(1-l)
#times the following functions of sigma: 1 (for k=0), 1/sigma (for k=1), 1/(1-2*sigma) (for k=2).
#In the N in the input, T^(2*sigma) is the order of the entries N[2][2][1][k].
def N_coeff(N,sign): #If sign=1 we are looking for the upper bound,
                     #if sign=-1 for the lower bound.
    S=[0,0,0]
    i=0
    while i<=7:
        j=0
        while j<=3:
            l=0
            while l<=1:
                k=0
                while k<=2:
                    if i<2 or j-i>0 or ((i==2 or j-i==0) and l==0):
                        #We first remove the terms of order higher than T^(2*sigma).
                        #The terms of order arbitrarily close from below to T^(2*sigma)
                        #are 1, T^(sigma), T^(2*sigma), T^(3*sigma-1/2), so they all sit in
                        #either i=2 or j-i=0; therefore we remove the terms with higher
                        #exponents, i.e. i<2 or j-i>0, or with the same exponents but
                        #one more log(T) factor, i.e. also l=0.
                        summand=0
                    elif i==2 and j==2 and l==1:
                        #This is the exact term T^(2*sigma).
                        summand=N[i][j][l][k]
                    else:
                        #Now we remove the small contributions that are not in N.
                        #The following two are zeta(2-2*sigma)*(-u).
                        #The error terms in the upper (resp. lower) bound are positive
                        #(resp. negative), but the coefficients that are being fed
                        # to the procedure are provided in absolute value,
                        #while the sign of zeta(2-2*sigma)*(-u)
                        #does not change in the two directions of the bound:
                        #so to remove the contribution we have to add (resp. subtract) it,
                        #and 'sign' is 1 (resp. -1) for the upper (resp. lower) bound.
                        #Also, by Lemma 2.12 we have zeta(2-2*sigma)<1/(1-2*sigma)+H,
                        #so that the removal happens inside the entries of N with k=0,2:
                        #that is where the term we are removing had been sent
                        #during the production of N inside the procedure N_string.
                        if i==2 and j==0 and l==1 and k==2:
                            N[i][j][l][k]=N[i][j][l][k]+1*sign
                        if i==2 and j==0 and l==1 and k==0:
                            N[i][j][l][k]=N[i][j][l][k]+H*sign
                        #As for the other small contributions not in N,
                        #they are not small: as observed in the proof of Thm. 4.5,
                        #the terms depending on u inside (4.18) are exactly those
                        #that would give error terms larger than T^(2*sigma).
                        #2*sqrt(zeta(2-2*sigma))*T^(1/2+sigma)/sqrt(u) is large.
                        #2*sqrt(zeta(2-2*sigma))*D*sqrt(u)/T^(1/2-sigma)*log(T/u) is large
                        #because sqrt(u)-->sqrt(T), since it comes from 1/rho.
                        #Finally we deal with all the other terms.
                        #Every term that is of smaller order than T^(2*sigma)
                        #comes with a correction that makes it
                        #impact less on the final product.
                        correction=TZ^(1/2*max(2-i,j-i))*(log(TZ))^(1-l)
                        #For T<e^2 though, log(T)/sqrt(T) is not decreasing,
                        #so we need a correction to the correction:
                        #in this case, the best we can do is taking the maximum
                        #of the function log(x)/sqrt(x) as our 'correction',
                        #which occurs in e^2.
                        if min(2-i,j-i)==-1 and l==0 and TZ<e^2:
                            correction=2/e
                        #We must also ignore the negative contributions.
                        summand=max(0, N[i][j][l][k]*correction )
                    S[k]=S[k]+summand
                    k=k+1
                l=l+1
            j=j+1
        i=i+1
    return [RIF(S[0]),RIF(S[1]),RIF(S[2])]
###
rhoN_sm12_0_msz = RIF( 1 ) #Coefficient of T^(sigma-1/2)/sqrt(u*zeta(2-2*sigma)) inside rho.
#The following strings collect the coefficients of the f1* from Prop. 4.1 when 1/2<sigma<1
#(and after the use of the proposition we apply the substitution sigma<->1-sigma),
#where the entry [i][j][l][k] of each f1* is the coefficient of
#T^(-i+2j*sigma)*(log(T))^(1-l) times the following functions of sigma:
#1 (for k=0), 1/sigma (for k=1), 1/(1-2*sigma) (for k=2), zeta(2-2*sigma) (for k=3).
#The 'u,l' indicate upper and lower bounds, respectively.
f11_121_u = [[[[ RIF( 0 ) , RIF( 0 ) , RIF( 0 ) , RIF( 0 ) ], \
[ RIF( 0 ) , RIF( 0 ) , RIF( 0 ) , RIF( 1 ) ]], \
[[ RIF( 0 ) , RIF( 0 ) , RIF( 0 ) , RIF( 0 ) ], \
[ RIF( 0 ) , RIF( 0 ) , RIF( 0 ) , RIF( 0 ) ]]], \
[[[ RIF( 0 ) , RIF( 0 ) , RIF( 0 ) , RIF( 0 ) ], \
[ RIF( 0 ) , RIF( 0 ) , RIF( 0 ) , RIF( 0 ) ]], \
[[ RIF( 0 ) , RIF( 0 ) , RIF( 0 ) , RIF( 0 ) ], \
[ RIF( 0 ) , RIF( 0 ) , RIF( -1 ) , RIF( 0 ) ]]], \
[[[ RIF( 0 ) , RIF( 0 ) , RIF( 0 ) , RIF( 0 ) ], \
[ RIF( 0 ) , RIF( 0 ) , RIF( 0 ) , RIF( 0 ) ]], \
[[ RIF( 0 ) , RIF( 0 ) , RIF( 0 ) , RIF( 0 ) ], \
[ RIF( 1 ) , RIF( 0 ) , RIF( 0 ) , RIF( 0 ) ]]]]
f12_121_u = [[[[ RIF( 0 ) , RIF( 0 ) , RIF( 0 ) , RIF( 0 ) ], \
[ RIF( 1/2 ) , RIF( 0 ) , RIF( 0 ) , RIF( 0 ) ]], \
[[ RIF( 0 ) , RIF( 0 ) , RIF( 0 ) , RIF( 0 ) ], \
[ RIF( 0 ) , RIF( 1/2 ) , RIF( 0 ) , RIF( 0 ) ]]], \
[[[ RIF( 0 ) , RIF( 0 ) , RIF( 0 ) , RIF( 0 ) ], \
[ RIF( 0 ) , RIF( 0 ) , RIF( 0 ) , RIF( 0 ) ]], \
[[ RIF( 0 ) , RIF( 0 ) , RIF( 0 ) , RIF( 0 ) ], \
[ RIF( 0 ) , RIF( 0 ) , RIF( 0 ) , RIF( 0 ) ]]], \
[[[ RIF( 0 ) , RIF( 0 ) , RIF( 0 ) , RIF( 0 ) ], \
[ RIF( 0 ) , RIF( 0 ) , RIF( 0 ) , RIF( 0 ) ]], \
[[ RIF( 0 ) , RIF( 0 ) , RIF( 0 ) , RIF( 0 ) ], \
[ RIF( 0 ) , RIF( 0 ) , RIF( 0 ) , RIF( 0 ) ]]]]
f13_121_u = [[[[ RIF( 0 ) , RIF( 0 ) , RIF( 0 ) , RIF( 0 ) ], \
[ RIF( 0 ) , RIF( 0 ) , RIF( 0 ) , RIF( 0 ) ]], \
[[ RIF( 0 ) , RIF( 0 ) , RIF( 0 ) , RIF( 0 ) ], \
[ RIF( 1 ) , RIF( 0 ) , RIF( 0 ) , RIF( 0 ) ]]], \
[[[ RIF( 0 ) , RIF( 0 ) , RIF( 0 ) , RIF( 0 ) ], \
[ RIF( 0 ) , RIF( 0 ) , RIF( 0 ) , RIF( 0 ) ]], \
[[ RIF( 2*Dz ) , RIF( 0 ) , RIF( 0 ) , RIF( 0 ) ], \
[ RIF( -1+Dz^2 ) , RIF( 0 ) , RIF( 0 ) , RIF( 0 ) ]]], \
[[[ RIF( 0 ) , RIF( 0 ) , RIF( 0 ) , RIF( 0 ) ], \
[ RIF( 0 ) , RIF( 0 ) , RIF( 0 ) , RIF( 0 ) ]], \
[[ RIF( 0 ) , RIF( 0 ) , RIF( 0 ) , RIF( 0 ) ], \
[ RIF( -Dz^2 ) , RIF( 0 ) , RIF( 0 ) , RIF( 0 ) ]]]]
f11_121_l = [[[[ RIF( 0 ) , RIF( 0 ) , RIF( 0 ) , RIF( 0 ) ], \
[ RIF( 0 ) , RIF( 0 ) , RIF( 0 ) , RIF( -1 ) ]], \
[[ RIF( 0 ) , RIF( 0 ) , RIF( 0 ) , RIF( 0 ) ], \
[ RIF( 0 ) , RIF( 0 ) , RIF( 0 ) , RIF( 0 ) ]]], \
[[[ RIF( 0 ) , RIF( 0 ) , RIF( 0 ) , RIF( 0 ) ], \
[ RIF( 0 ) , RIF( 0 ) , RIF( 0 ) , RIF( 0 ) ]], \
[[ RIF( 0 ) , RIF( 0 ) , RIF( 0 ) , RIF( 0 ) ], \
[ RIF( 0 ) , RIF( 0 ) , RIF( 1 ) , RIF( 0 ) ]]], \
[[[ RIF( 0 ) , RIF( 0 ) , RIF( 0 ) , RIF( 0 ) ], \
[ RIF( 0 ) , RIF( 0 ) , RIF( 0 ) , RIF( 0 ) ]], \
[[ RIF( 0 ) , RIF( 0 ) , RIF( 0 ) , RIF( 0 ) ], \
[ RIF( 1/2 ) , RIF( 0 ) , RIF( 0 ) , RIF( 0 ) ]]]]
f12_121_l = [[[[ RIF( 0 ) , RIF( 0 ) , RIF( 0 ) , RIF( 0 ) ], \
[ RIF( 0 ) , RIF( 1/2 ) , RIF( 0 ) , RIF( 0 ) ]], \
[[ RIF( 0 ) , RIF( 0 ) , RIF( 0 ) , RIF( 0 ) ], \
[ RIF( 0 ) , RIF( -1/2 ) , RIF( 0 ) , RIF( 0 ) ]]], \
[[[ RIF( 0 ) , RIF( 0 ) , RIF( 0 ) , RIF( 0 ) ], \
[ RIF( 0 ) , RIF( 0 ) , RIF( 0 ) , RIF( 0 ) ]], \
[[ RIF( 0 ) , RIF( 0 ) , RIF( 0 ) , RIF( 0 ) ], \
[ RIF( 1/2 ) , RIF( 0 ) , RIF( 0 ) , RIF( 0 ) ]]], \
[[[ RIF( 0 ) , RIF( 0 ) , RIF( 0 ) , RIF( 0 ) ], \
[ RIF( 0 ) , RIF( 0 ) , RIF( 0 ) , RIF( 0 ) ]], \
[[ RIF( 0 ) , RIF( 0 ) , RIF( 0 ) , RIF( 0 ) ], \
[ RIF( 0 ) , RIF( 0 ) , RIF( 0 ) , RIF( 0 ) ]]]]
#In f12_121_l[0][0][1][1] we have used
#zeta(1-2*sigma)>-1/(2*sigma) for 0<sigma<1/2 by Lemma 2.12.
f13_121_l = [[[[ RIF( 0 ) , RIF( 0 ) , RIF( 0 ) , RIF( 0 ) ], \
[ RIF( 0 ) , RIF( 0 ) , RIF( 0 ) , RIF( 0 ) ]], \
[[ RIF( 0 ) , RIF( 0 ) , RIF( 0 ) , RIF( 0 ) ], \
[ RIF( -1 ) , RIF( 0 ) , RIF( 0 ) , RIF( 0 ) ]]], \
[[[ RIF( 0 ) , RIF( 0 ) , RIF( 0 ) , RIF( 0 ) ], \
[ RIF( 0 ) , RIF( 0 ) , RIF( 0 ) , RIF( 0 ) ]], \
[[ RIF( -2*Dz ) , RIF( 0 ) , RIF( 0 ) , RIF( 0 ) ], \
[ RIF( 1-Dz^2 ) , RIF( 0 ) , RIF( 0 ) , RIF( 0 ) ]]], \
[[[ RIF( 0 ) , RIF( 0 ) , RIF( 0 ) , RIF( 0 ) ], \
[ RIF( 0 ) , RIF( 0 ) , RIF( 0 ) , RIF( 0 ) ]], \
[[ RIF( 0 ) , RIF( 0 ) , RIF( 0 ) , RIF( 0 ) ], \
[ RIF( Dz^2 ) , RIF( 0 ) , RIF( 0 ) , RIF( 0 ) ]]]]
#The terms N_* collect the error terms from T^(2*sigma) downwards.
N_u = N_coeff(N_string(f11_121_u,f12_121_u,f13_121_u,rhoN_sm12_0_msz),1)
N_l = N_coeff(N_string(f11_121_l,f12_121_l,f13_121_l,-rhoN_sm12_0_msz),-1)
\end{mysage}
\normalsize

To estimate \eqref{awesome}, we could resort to Proposition \ref{extension}, using the functions $t\mapsto t^{1-2\sigma}$, $t\mapsto t^{-2\sigma}$ and the bounds for $\int_1^T|\zeta(1-\sigma+it)|^2dt$ given in Theorem \ref{th:z->=1/2}. This approach, while simpler, produces less accurate second order terms. One can do better by studying $\int_u^T|\zeta(1-\sigma+it)|^2dt$ for $1\leq u\leq T$. We proceed as in Theorem~\ref{th:z->=1/2}, with the general bound of Proposition~\ref{prop:z-all}. Set $\rho=\frac{T^{\sigma-1/2}}{\sqrt{\zeta(2-2\sigma)u}}$: the dependence on $u$ allows us to use Proposition~\ref{extension} non-trivially, yielding sharper estimates, while $\rho$ in Proposition~\ref{prop:z-allb} depends solely on $T$. Afterwards, we merge the second order terms according to either $u\geq 1$ or $u\leq T$, recalling Lemma~\ref{le:stieltjes} and $T\geq T_{0}$. The final bounds are
\begin{equation}\label{asyy}
-r^-(u,T)\leq\int_u^T|\zeta(1-\sigma+it)|^2dt-\zeta(2-2\sigma)(T-u)\leq r^+(u,T),
\end{equation} 
where
\begin{align}\label{remainder}
r^{\pm}(u,T)\!= & \begin{cases}2\sqrt{\zeta(2-2\sigma)}\left(\frac{T^{\frac{1}{2}+\sigma}}{\sqrt{u}}+\frac{D\sqrt{u}}{T^{\frac{1}{2}-\sigma}}\log\left(\frac{T}{u}\right)\right)\!+\!N^{\pm}(\sigma)T^{2\sigma} & \text{if $\sigma>0$}, \\
\pi\sqrt{\frac{2T}{3u}}+W^{\pm}\log T & \text{if $\sigma=0$},
\end{cases}
\end{align}
and
\begin{align*}
N^{+}(\sigma)= & \ \frac{\sage{roundup(N_u[1].upper(),5)}}{\sigma}+\frac{\sage{roundup(N_u[2].upper(),5)}}{1-2\sigma}+\sage{roundup(N_u[0].upper(),5)}, & W^{+}= & \ \sage{roundup(Cz_1gen_u.upper(),5)}, \\
N^{-}(\sigma)= & \ \frac{\sage{roundup(N_l[1].upper(),5)}}{\sigma}+\frac{\sage{roundup(N_l[2].upper(),5)}}{1-2\sigma}+\sage{roundup(N_l[0].upper(),5)}, & W^{-}= & \ \sage{roundup(Cz_1gen_l.upper(),5)}.
\end{align*}
As remarked, the terms in $u$ in the definition of $r^{\pm}(u,T)$ are those that would have otherwise given larger error terms if we had taken $r^{\pm}$ independent of $u$.

We further verify by \eqref{asyy} that the conditions of Proposition \ref{extension} are met with the increasing function $f(t)=t^{1-2\sigma}-1$, $Z(t)=|\zeta(1-\sigma+it)|^2$ and $a_{0}=1$ (we cannot use $f(t)=t^{1-2\sigma}$ directly as \eqref{asyy} is only valid for $u\geq 1$). We split the integral as
\begin{equation*}
\int_{1}^{T}\!\!t^{1-2\sigma}|\zeta(1-\sigma+it)|^2dt=\!\int_{1}^{T}\!\!(t^{1-2\sigma}-1)|\zeta(1-\sigma+it)|^2dt+\int_{1}^{T}\!\!|\zeta(1-\sigma+it)|^2 dt,
\end{equation*}
and the second integral is already bounded by \eqref{asyy}. For the first, we thus apply Proposition~\ref{extension}(i) using the bound in \eqref{asyy} as
\begin{align*}
& \ \int_{1}^{T}(t^{1-2\sigma}-1)|\zeta(1-\sigma+it)|^2dt\leq\int_{0}^{T}u^{1-2\sigma}\zeta(2-2\sigma) \\
& \ +(1-2\sigma)\left(2\sqrt{\zeta(2-2\sigma)}T^{\frac{1}{2}+\sigma}u^{-\sigma-\frac{1}{2}}
+ \ N^{+}(\sigma)T^{2\sigma}u^{-2\sigma}\right)du \\&+\int_{1}^{T}2\sqrt{\zeta(2-2\sigma)}DT^{\frac{1}{2}-\sigma}u^{\frac{1}{2}}\log\left(\frac{T}{u}\right)-\zeta(2-2\sigma)du \\
= & \ \frac{\zeta(2-2\sigma)}{2-2\sigma}T^{2-2\sigma}+4\sqrt{\zeta(2-2\sigma)}T+N^{+}(\sigma)T \\
& + \ (1-2\sigma)2\sqrt{\zeta(2-2\sigma)}D\frac{T^{1-\sigma}-T^{\sigma-\frac{1}{2}}}{\left(\frac{3}{2}-2\sigma\right)^{2}}-\zeta(2-2\sigma)(T-1).
\end{align*}

\tiny
\begin{mysage}
#The following constants come from the reasoning leading from N^+,N^-
#to S^+,S^-: read the proof for details. By Lemma 2.12 we have
#zeta(2-2*sigma)<1/(1-2*sigma)+H and sqrt(zeta(2-2*sigma))<1/(1-2*sigma)+sqrt(H).
#Also (1-2*sigma)/(3/2-2*sigma)^2<=1/2, and in the lower bound 1/(2-2*sigma)<1.
S_u___1= RIF( 2*N_u[1] )
S_u___12 = RIF( 2*N_u[2]+6+3*Dz )
S_u___0 = RIF( 2*N_u[0]+6*sqrt(H)+3*Dz*sqrt(H) )
S_l___1 = RIF( 2*N_l[1] )
S_l___12 = RIF( 2*N_l[2]+6+3*Dz+1 )
S_l___0 = RIF( 2*N_l[0]+6*sqrt(H)+3*Dz*sqrt(H)+H )
\end{mysage}
\normalsize

We proceed similarly for the lower bound (a term $-\frac{\zeta(2-2\sigma)}{2-2\sigma}$ emerges in that case from the approximations) and for $\sigma=0$. Using also Lemma~\ref{le:stieltjes} and $\frac{1-2\sigma}{\left(\frac{3}{2}-2\sigma\right)^{2}}\leq\frac{1}{2}$, we obtain
\begin{align*}
\int_{1}^{T}t^{1-2\sigma}|\zeta(1-\sigma+it)|^2dt&\leq\frac{\zeta(2-2\sigma)}{2-2\sigma}T^{2-2\sigma}+S^{+}(\sigma,T), \\
\int_{1}^{T}t^{1-2\sigma}|\zeta(1-\sigma+it)|^2dt&\geq\frac{\zeta(2-2\sigma)}{2-2\sigma}T^{2-2\sigma}-S^{-}(\sigma,T)
\end{align*}
where
\begin{align*}
S^{+}(\sigma,T)= & \begin{cases}\left(\frac{\sage{roundup(S_u___1.upper(),5)}}{\sigma}+\frac{\sage{roundup(S_u___12.upper(),5)}}{1-2\sigma}+\sage{roundup(S_u___0.upper(),5)}\right)T & \text{if $0<\sigma<\frac{1}{2}$},\\
W^{+} T\log T+2\pi\sqrt{\frac{2}{3}}T & \text{if $\sigma=0$},
\end{cases} \\
S^{-}(\sigma,T)= & \begin{cases}\left(\frac{\sage{roundup(S_l___1.upper(),5)}}{\sigma}+\frac{\sage{roundup(S_l___12.upper(),5)}}{1-2\sigma}+\sage{roundup(S_l___0.upper(),5)}\right)T & \text{if $0<\sigma<\frac{1}{2}$},\\
 W^{-} T\log T+2\pi\sqrt{\frac{2}{3}}T & \text{if $\sigma=0$}.
\end{cases}
\end{align*}

Finally, for the error term of \eqref{awesome}, the conditions of Proposition~\ref{extension} are not met with $f(t)=t^{-2\sigma}$ and $0<\sigma<\frac{1}{2}$. Instead, we apply the weaker bound $t^{-2\sigma}<1$, sufficient to have an error term of order $T$, and use Theorem~\ref{th:z->=1/2} with $1-\sigma$ instead of $\sigma$.
\end{proof}

\subsection{Square mean of $\frac{\zeta(s)}{s}$ on tails: asymptotically sharp bounds}

We will use the bounds for $\zeta(s)$ given in the previous sections and the machinery of Proposition~\ref{extension} to retrieve upper bounds for $\frac{\zeta(s)}{s}$.

\tiny
\begin{mysage}
#For checks about the following constants, see at the end of the file.
C_12_a_main = round( 4.0 ,ndigits=5)
C_12_b_main = round( 49.8039 ,ndigits=5)
C_1_a_main = round( 25.25217 ,ndigits=5)
C_1_b_main = round( 16.04622 ,ndigits=5)
\end{mysage}
\normalsize

\begin{theorem}\label{th:z/s-all}
Let $T\geq T_{0}=\sage{TZ}$; let $L^{+},N^{+}$ be as in Theorem~\ref{asymptotic<1/2}, and let $D$ be defined as in Lemma~\ref{le:353} with $C=\lfloor T_{0}\rfloor$. Then $\int_T^\infty\frac{|\zeta(s)|^2}{|s|^2}dt$ is bounded from above by
\begin{align*}
& \frac{\zeta(2-2\sigma)}{2\sigma(2\pi)^{1-2\sigma}}\cdot\frac{1}{T^{2\sigma}}+2L^{+}(\sigma)\cdot\frac{1}{T} & & \text{if \ $0<\sigma<\frac{1}{2}$,} \\
& \frac{\log T}{T}+\sage{C_12_a_main}\cdot\frac{\sqrt{\log T}}{T}+\sage{C_12_b_main}\cdot\frac{1}{T} & & \text{if \ $\sigma=\frac{1}{2},$} \\
& \zeta(2\sigma)\cdot\frac{1}{T}+\left(2N^{+}(1-\sigma)+(D+4)\sqrt{\zeta(2\sigma)}\right)\cdot\frac{1}{T^{2\sigma}} & & \text{if \ $\frac{1}{2}<\sigma<1$,} \\
& \frac{\pi^2}{6}\cdot\frac{1}{T}+\sage{C_1_a_main}\cdot \frac{\log T}{T^2}+\sage{C_1_b_main}\cdot \frac{1}{T^{2}} & & \text{if \ $\sigma=1$.}
\end{align*}
\end{theorem}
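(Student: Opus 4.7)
The plan is to apply Proposition~\ref{extension} in case (ii) with $f(t)=1/|\sigma+it|^{2}=1/(\sigma^{2}+t^{2})$, $Z(t)=|\zeta(\sigma+it)|^{2}$, $T_{1}=T$, and $a_{1}=\infty$. The weight $f$ is positive, differentiable, and strictly decreasing on $[T,\infty)$, with $f(\infty)=0$, and the hypothesis $\lim_{u\to\infty}f(u)F(T,u)=0$ is satisfied for every $\sigma>0$ since in all of our cases $F(T,u)$ grows at most like $u^{2-2\sigma}\log u$, which is $o(u^{2})$. Monotone convergence lets me extend to $T_{2}=\infty$, so Proposition~\ref{extension}(ii) yields
\begin{equation*}
\int_{T}^{\infty}\frac{|\zeta(\sigma+it)|^{2}}{\sigma^{2}+t^{2}}\,dt\leq\int_{T}^{\infty}\frac{1}{\sigma^{2}+u^{2}}\,\frac{\partial F(T,u)}{\partial u}\,du+\int_{T}^{\infty}\frac{2u\,r^{+}(T,u)}{(\sigma^{2}+u^{2})^{2}}\,du.
\end{equation*}

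First I would secure bounds of the form $\int_{T}^{u}|\zeta(\sigma+it)|^{2}\,dt\leq F(T,u)+r^{+}(T,u)$ valid for arbitrary $T\leq u$, not only $T=1$ as in Theorems~\ref{th:z->=1/2} and~\ref{asymptotic<1/2}. For $\sigma\in[1/2,1]$, such bounds are read off from Propositions~\ref{prop:z-all} and~\ref{prop:z-allb}, which already accommodate a general lower endpoint; one need only choose the splitting parameter $\rho$ to depend on both $T$ and $u$ (for instance of the order $1/\sqrt{Tu}$ at $\sigma=1$) so that the resulting $r^{+}(T,u)$ grows only logarithmically in $u$ once the main term is peeled off. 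For $\sigma\in(0,1/2)$, I would pass through the functional equation exactly as in the proof of Theorem~\ref{asymptotic<1/2} and use the inequalities \eqref{asyy}--\eqref{remainder}, which provide the main term $\frac{\zeta(2-2\sigma)}{(2\pi)^{1-2\sigma}(2-2\sigma)}(u^{2-2\sigma}-T^{2-2\sigma})$ together with an error $r^{+}(T,u)$ whose dominant $u$-piece is $N^{+}(\sigma)\,u^{2-2\sigma}$.

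The main-term integral evaluates in closed form in each case and produces exactly the leading behavior claimed: $\zeta(2\sigma)\int_{T}^{\infty}du/(\sigma^{2}+u^{2})=\zeta(2\sigma)/T-O(1/T^{3})$ for $1/2<\sigma\leq 1$ (with $\zeta(2)=\pi^{2}/6$ for $\sigma=1$); an integration by parts of $(\log u+\gamma)/(\sigma^{2}+u^{2})$ giving $\log(T)/T+(1+\gamma)/T$ for $\sigma=1/2$; and $\frac{\zeta(2-2\sigma)}{(2\pi)^{1-2\sigma}}\int_{T}^{\infty}u^{1-2\sigma}/(\sigma^{2}+u^{2})\,du\sim\frac{\zeta(2-2\sigma)}{2\sigma(2\pi)^{1-2\sigma}T^{2\sigma}}$ for $0<\sigma<1/2$. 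The lower-order pieces of $\partial_{u}F$ that are not part of the stated leading term are absorbed into the error constants.

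The main obstacle will be controlling the error integral $\int_{T}^{\infty}2u(\sigma^{2}+u^{2})^{-2}\,r^{+}(T,u)\,du$: since $2u(\sigma^{2}+u^{2})^{-2}\leq 2/u^{3}$, any piece of $r^{+}(T,u)$ that grows faster than a logarithm in $u$ (in particular a $\sqrt{u}$ piece) would produce an error larger than $1/T^{2}$ or $1/T^{2\sigma}$. The key technical point is therefore to choose $\rho$ in Propositions~\ref{prop:z-all} and~\ref{prop:z-allb} so that the only non-negligible $u$-dependence of $r^{+}(T,u)$ comes either from $u^{2-2\sigma}$ (in the $0<\sigma<1/2$ case, whose integral against $2/u^{3}$ gives $1/T^{2\sigma}$ of exactly the right size) or from powers of $\log(u/T)$ (which integrate after the substitution $v=u/T$ to $\log(T)/T^{2}$ or $\sqrt{\log T}/T$). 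The explicit coefficients in the theorem---most notably $2N^{+}(1-\sigma)+(D+4)\sqrt{\zeta(2\sigma)}$ in the $1/2<\sigma<1$ case---should emerge from combining the main-term and error-term integrals after this careful choice of $\rho$ and a routine bookkeeping of the constants coming from Lemma~\ref{le:353} and Proposition~\ref{pr:brudern}.
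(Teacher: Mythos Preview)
Your approach is essentially the paper's: apply Proposition~\ref{extension}(ii) with $f(t)=1/(\sigma^{2}+t^{2})$, feed in mean-value upper bounds for $\int_{T}^{u}|\zeta|^{2}$, and integrate using the simplifications $1/(\sigma^{2}+u^{2})\leq 1/u^{2}$ and $-f'(u)\leq 2/u^{3}$. For $\tfrac12<\sigma\leq 1$ the paper uses exactly the general-endpoint bound~\eqref{asyy} (with $\sigma$ replaced by $1-\sigma$) that you indicate; the three pieces of $r^{+}(T,u)$ in~\eqref{remainder} integrate against $2/u^{3}$ to give precisely $2N^{+}(1-\sigma)/T^{2\sigma}$ and $(D+4)\sqrt{\zeta(2\sigma)}/T^{2\sigma}$ after crude majorizations such as $1/\sigma\leq 2$ and $T^{-1-\sigma}\leq T^{-2\sigma}$.

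One simplification you overlook, and one confusion. For $0<\sigma\leq\tfrac12$ the paper does \emph{not} redevelop bounds with a general lower endpoint: it simply uses $\int_{T}^{u}|\zeta|^{2}\,dt\leq\int_{1}^{u}|\zeta|^{2}\,dt$ and then Theorem~\ref{asymptotic<1/2} (resp.\ Theorem~\ref{th:z->=1/2}) directly, so that $r^{+}(T,u)=L^{+}(\sigma)\,u$ and $\int_{T}^{\infty}\tfrac{2}{u^{3}}L^{+}(\sigma)u\,du=2L^{+}(\sigma)/T$ falls out immediately. Your claim that for $0<\sigma<\tfrac12$ the error $r^{+}(T,u)$ has ``dominant $u$-piece $N^{+}(\sigma)\,u^{2-2\sigma}$'' is not right: $N^{+}$ belongs to the bound~\eqref{asyy} for $|\zeta(1-\sigma+it)|^{2}$, not for $|\zeta(\sigma+it)|^{2}$, and an error of size $u^{2-2\sigma}$ would integrate against $2/u^{3}$ to order $T^{-2\sigma}$, which is the same order as the main term here and would not yield the stated $2L^{+}(\sigma)/T$.
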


\begin{proof}
We apply case (ii) of Proposition~\ref{extension} by taking $T_1=T\geq 1$, $T_2=\infty$, $f(t)=\frac{1}{\sigma^2+t^2}$, and $a_{1}=\infty$. Using the bounds $u^{2}<\sigma^{2}+u^{2}$ and $(-(\sigma^2+u^2)^{-1})'<2u^{-3}$, we get
\begin{equation*}
\int_{T}^{\infty}\frac{|\zeta(s)|^2}{|s|^2}dt<\int_{T}^{\infty}\left(\frac{1}{u^{2}}\frac{\partial F(T,u)}{\partial u}+\frac{2}{u^{3}}r^{+}(T,u)\right)du,
\end{equation*}
for appropriate choices of $F$ and $r^{+}$, which are taken as follows. 

For $0<\sigma<\frac{1}{2}$, we use Theorem~\ref{asymptotic<1/2} and the observation that the integral of $|\zeta(\sigma+it)|^{2}$ in $[T,u]$ is bounded by the integral in $[1,u]$. For $\sigma=\frac{1}{2}$, we use Theorem~\ref{th:z->=1/2} and the same observation. For $\frac{1}{2}<\sigma\leq 1$, we use the upper bound in \eqref{asyy} replacing $\sigma$ by $1-\sigma$.
\end{proof}

When $\sigma=0$, note by Proposition~\ref{extension} that the main term of $\int_T^\infty\frac{|\zeta(s)|^2}{|s|^2}dt$ is $\frac{\pi^{2}}{12}\int_T^{\infty}\frac{u}{\sigma^2+u^2}du=\frac{\pi^{2}}{12}\cdot\left.\frac{1}{2}\log(\sigma^2+u^2)\right|_T^\infty=\infty$, so that the integral is divergent.

\section{Numerical considerations}\label{Num}

\tiny
\begin{mysage}
def bisect(f1,f2,Tmin,Tmax,sigma):
    while Tmax-Tmin>1:
        Tmed=floor((Tmax+Tmin)/2)
        if RIF(f1.substitute(T=Tmed))/RIF(f2.substitute(T=Tmed))>RIF(1):
            Tmax=Tmed
        else:
            Tmin=Tmed
        if Tmin>=10^10 and sigma!=1/2:
            siz=floor(log(Tmax)/log(10))
            piemax=ceil(Tmax/10^(siz-2))
            piemin=floor(Tmin/10^(siz-2))
            if piemax-piemin<=1:
                Tmin=Tmax
    if Tmax>10^50:
        return '>10^50'
    elif Tmax<200:
        return '<200'
    else:
        if Tmax<10^10 or sigma==1/2:
            return Tmax
        else:
            return [piemax/100,siz]
def find_threshold(ind,sigma,Tmin,Tmax):
    var('s,T')
    if sigma>1/2 and sigma<1:
        if ind==0: #if the threshold is for the versions in Thm. 1.1
            Numer=3/5*pi*zeta(2*sigma)+ \
                  (C_121_num_a_main-C_121_num_b_main/(sigma-1/2))*T^(1-2*sigma)
            Asymp=zeta(2*sigma)+C_121_opt_main/((sigma-1/2)*(1-sigma))*T^(1-2*sigma)
        elif ind==1: #if the threshold is for the versions in Thms. 3.1 and 4.6
            Numer=pi*(3/5*zeta(2*sigma)+kpar_main^sigma* \
                  (k111_main/sigma+k112_main/(2*sigma+1)+k113_main/(sigma+1)-k114_main/(2*sigma-1))* \
                  1/T^(2*sigma-1)+kpar_main^sigma*k12x_main/T^(2*sigma))
            Asymp=zeta(2*sigma)+(2*(roundup(N_u[1].upper(),5)/sigma+ \
                  roundup(N_u[2].upper(),5)/(1-2*sigma)+roundup(N_u[0].upper(),5))+ \
                  (Dz+4)*sqrt(zeta(2*sigma)))*T^(1-2*sigma)
        else:
            return 'Error'
    elif sigma>0 and sigma<1/2:
        if ind==0: #if the threshold is for the version in Thm. 1.1
            Numer=(C_012_num_a_main/sigma+C_012_num_b_main/(1/2-sigma)+C_012_num_c_main)+ \
                  C_012_num_d_main*abs(zeta(2*sigma))*T^(2*sigma-1)
            Asymp=zeta(2-2*sigma)/(2*sigma*(2*pi)^(1-2*sigma))+ \
                  C_012_opt_main/(sigma^2*(1/2-sigma))*T^(2*sigma-1)
        elif ind==1: #if the threshold is for the versions in Thms. 3.1 and 4.6
            Numer=pi*(kpar_main^sigma*(k111_main/sigma+k112_main/(2*sigma+1)+ \
                  k113_main/(sigma+1)+k314_main/(1-2*sigma))+c30x_main*zeta(2*sigma)* \
                  1/T^(1-2*sigma)+kpar_main^sigma*k12x_main/T)
            Asymp=zeta(2-2*sigma)/(2*sigma*(2*pi)^(1-2*sigma))+2*1/(2*pi)^(1-2*sigma)* \
                  (L_11_u_main/sigma^2+L_1_u_main/sigma+L_12_u_main/(1-2*sigma)+L_0_u_main)*T^(2*sigma-1)
        else:
            return 'Error'
    elif sigma==1/2: #the ind can be anything
        Numer=pi*(3/5*log(T)+c21x_main+c22x_main/T)
        Asymp=log(T)+C_12_a_main*sqrt(log(T))+C_12_b_main
    else:
        return 'Error'
    if (RIF(Numer.substitute(T=Tmin))/RIF(Asymp.substitute(T=Tmin))>=RIF(1) and Tmin>=200) \
    or RIF(Numer.substitute(T=Tmax))/RIF(Asymp.substitute(T=Tmax))<=RIF(1):
        return 'Error'
    elif RIF(Numer.substitute(T=Tmin))/RIF(Asymp.substitute(T=Tmin))>=RIF(1) and Tmin<200:
        return '<200'
    return bisect(Numer,Asymp,Tmin,Tmax,sigma)
\end{mysage}
\normalsize

In case \eqref{bigthm1} of Theorem~\ref{thm:main}, we only show the bound from Theorem~\ref{th:z/s-all}, since it is always stronger than the one from Theorem~\ref{thzeta}. In case \eqref{bigthm12}, we chose $T=10^{\sage{Exp_12}}$ because the threshold where the second bound is better than the first sits in $(10^{\sage{Exp_12-1}},10^{\sage{Exp_12}}]$.

In case \eqref{bigthm121}, $T=\sage{T0_121}$ is the lowest integer at which for some $\sigma\in\left(\frac{1}{2},1\right)$ the second bound is stronger than the first. In Table~\ref{ta:sigma>1/2} we give thresholds $T$ for all $\sigma\in\frac{1}{20}\mathbb{N}\cap\left(\frac{1}{2},1\right)$. We also present thresholds for $\sigma\in\frac{1}{100}\mathbb{N}\cap\left(\frac{1}{2},\frac{2}{5}\right)$ for the bounds of Theorems~\ref{thzeta} and \ref{th:z/s-all}: for the same $\sigma$, these sharper bounds yield a lower $T$ than the ones from Theorem~\ref{thm:main}.

In case \eqref{bigthm012}, $T=\sage{T0_012}$ is the lowest integer at which for some $\sigma\in\left(0,\frac{1}{2}\right)$ the second bound is stronger than the first. Table~\ref{ta:sigma<1/2} gives thresholds between the bounds of Theorem~\ref{thm:main} or between those in Theorems~\ref{thzeta} and~\ref{th:z/s-all}.

In the tables, the significant digits of the higher entries of $T$ have been reduced for simplicity. To obtain the reported approximations, the threshold has been rounded up.

Lastly: the loss of precision in Theorems~\ref{thm:main} and \ref{thm:mainz} with respect to Theorems~\ref{thzeta}, \ref{th:z/s-all}, \ref{th:z->=1/2} and \ref{asymptotic<1/2} may be significant, especially for $\sigma\not\in\left\{0,\frac{1}{2},1\right\}$. In \S\ref{Int}, we favored simplicity in the statements, provided that they showed the correct asymptotics for the main terms and the correct order of the error terms for $T\rightarrow\infty$ and $\sigma$ tending to $0,\frac{1}{2},1$. Readers wanting sharper bounds are advised to rely on the stronger estimates of \S\ref{Exp} and \S\ref{Opt}.
\bigskip
\\
\begin{minipage}[c]{0.53\linewidth}
\centering
\begin{tabular}{|l|l|l|l|l|}
\hline
$\sigma$ & Th.~\ref{thm:main} & \!\!\!\! & $\sigma$ & Th.~\ref{thzeta}-\ref{th:z/s-all} \\ \hline
$0.55$\! & $\approx\sage{round( find_threshold(0,55/100,100,10^25)[0] ,ndigits=2)}\!\cdot\! 10^{\sage{find_threshold(0,55/100,100,10^25)[1]}}$ & \!\!\!\! & $0.51$\! & $\approx\sage{round( find_threshold(1,51/100,100,10^50)[0] ,ndigits=2)}\!\cdot\! 10^{\sage{find_threshold(1,51/100,100,10^50)[1]}}$\! \\ \hline
$0.6$ & $\sage{find_threshold(0,60/100,100,10^10)}$ & \!\!\!\! & $0.52$\! & $\approx\sage{round( find_threshold(1,52/100,100,10^30)[0] ,ndigits=2)}\!\cdot\! 10^{\sage{find_threshold(1,52/100,100,10^30)[1]}}$\! \\ \hline
$0.65$\! & $\sage{find_threshold(0,65/100,100,10^10)}$ & \!\!\!\! & $0.53$\! & $\approx\!\sage{round( find_threshold(1,53/100,100,10^18)[0] ,ndigits=2)}\!\cdot\! 10^{\sage{find_threshold(1,53/100,100,10^18)[1]}}$\! \\ \hline
$0.7$ & $\sage{find_threshold(0,70/100,100,10^7)}$ & \!\!\!\! & $0.54$\! & $\sage{find_threshold(1,54/100,100,10^18)}$ \\ \hline
$0.75$\! & $\sage{find_threshold(0,75/100,100,10^6)}$ & \!\!\!\! & $0.55$\! & $\sage{find_threshold(1,55/100,100,10^18)}$ \\ \hline
$0.8$ & $\sage{find_threshold(0,80/100,100,10^6)}$ & \!\!\!\! & $0.56$\! & $(<200)$ \\ \hline
$0.85$\! & $\sage{find_threshold(0,85/100,100,10^6)}$ & \!\!\!\! & $0.57$\! & $(<200)$ \\ \hline
$0.9$ & $\sage{find_threshold(0,90/100,100,10^6)}$ & \!\!\!\! & $0.58$\! & $(<200)$ \\ \hline
$0.95$\! & $\sage{find_threshold(0,95/100,100,10^6)}$ & \!\!\!\! & $0.59$\! & $\sage{find_threshold(1,59/100,4,10^8)}$ \\ \hline
\end{tabular}
\smallskip
\captionof{table}{$T$ for $\sigma>\frac{1}{2}$.}
\label{ta:sigma>1/2}
\end{minipage}
\hfill
\begin{minipage}[c]{0.45\linewidth}
\centering
\begin{tabular}{|l|l|l|}
\hline
$\sigma$ & Th.~\ref{thm:main} & Th.~\ref{thzeta}-\ref{th:z/s-all} \\ \hline
$0.05$\! & $\sage{find_threshold(0,5/100,100,10^6)}$ & $\sage{find_threshold(1,5/100,10,10^6)}$ \\ \hline
$0.1$ & $\sage{find_threshold(0,10/100,100,10^6)}$ & $\sage{find_threshold(1,10/100,100,10^6)}$ \\ \hline
$0.15$\! & $\sage{find_threshold(0,15/100,100,10^6)}$ & $\sage{find_threshold(1,15/100,100,10^6)}$ \\ \hline
$0.2$ & $\sage{find_threshold(0,20/100,100,10^6)}$ & $\sage{find_threshold(1,20/100,100,10^6)}$ \\ \hline
$0.25$\! & $\sage{find_threshold(0,25/100,100,10^6)}$ & $\sage{find_threshold(1,25/100,100,10^6)}$ \\ \hline
$0.3$ & $\sage{find_threshold(0,30/100,100,10^8)}$ & $\sage{find_threshold(1,30/100,100,10^8)}$ \\ \hline
$0.35$\! & $\sage{find_threshold(0,35/100,100,10^10)}$ & $\sage{find_threshold(1,35/100,100,10^10)}$ \\ \hline
$0.4$ & $\approx\sage{round( find_threshold(0,40/100,100,10^15)[0] ,ndigits=2)}\!\cdot\! 10^{\sage{find_threshold(0,40/100,100,10^15)[1]}}$\! & $\approx\sage{round( find_threshold(1,40/100,100,10^15)[0] ,ndigits=2)}\!\cdot\! 10^{\sage{find_threshold(1,40/100,100,10^15)[1]}}$\! \\ \hline
$0.45$\! & $\approx\sage{round( find_threshold(0,45/100,100,10^25)[0] ,ndigits=2)}\!\cdot\! 10^{\sage{find_threshold(0,45/100,100,10^25)[1]}}$\! & $\approx\sage{round( find_threshold(1,45/100,100,10^25)[0] ,ndigits=2)}\!\cdot\! 10^{\sage{find_threshold(1,45/100,100,10^25)[1]}}$\! \\ \hline
\end{tabular}
\smallskip
\smallskip
\smallskip
\captionof{table}{$T$ for $\sigma<\frac{1}{2}$.}
\label{ta:sigma<1/2}
\end{minipage}

\section*{Acknowledgements}

Thanks are due to F.~Aryan, J.~Bajpai, J.~Br\"udern, F.~Petrov, O.~Ramar\'e, A.~Simoni\v{c}, and M.~Young.

D.~Dona was supported by the European Research Council under Programme H2020-EU.1.1., ERC Grant ID: 648329 (codename GRANT). H.~Helfgott was
supported by the same ERC grant and by his Humboldt professorship. S.~Zuniga Alterman was supported by the fund of CONICYT PFCHA/DBCh/2015 - 72160520.

\Addresses

\tiny
\begin{mysage}
###
#For sigma=1, the bound in Thm. 1.1 has terms 1/T and log(T)/T^2,
#the bound in Thm. 4.6 has 1/T, log(T)/T^2 and 1/T^2:
#here we collect the coefficients of log(T)/T^2 and 1/T^2.
C_1_real=roundup(RIF(C_1_a_main+C_1_b_main/log(T0_1)).upper(),2)
if C_1_main!=C_1_real:
    String_C_1='Error in C_1. It should be '+ \
    str(C_1_real)+ \
    ' instead of '+str(C_1_main)+'. '
else:
    String_C_1=''
###
#For sigma in (1/2,1), the bound in Thm. 1.1 has terms 1/T and 1/T^(2*sigma),
#the bound in Thm. 3.1 has 1/T, 1/T^(2*sigma) and 1/T^(2*sigma+1):
#here we collect the coefficients of 1/T^(2*sigma) and 1/T^(2*sigma+1).
#The two coefficients C_121_num_a,b are multiplied by 1 and 1/(sigma-1/2) respectively.
max_111=max( kpar^(1/2)/(1/2), kpar^1/1 )
max_112=max( kpar^(1/2)/(2*1/2+1), kpar^1/(2*1+1) )
max_113=max( kpar^(1/2)/(1/2+1), kpar^1/(1+1) )
max_12x=max( kpar^(1/2), kpar^1 )
C_121_num_a_real=roundup(RIF(pi*(k111*max_111+k112*max_112+k113*max_113+k12x*max_12x/TH)).upper(),2)
if C_121_num_a_main!=C_121_num_a_real:
    String_C_121_num_a='Error in C_121_num_a. It should be '+ \
    str(C_121_num_a_real)+ \
    ' instead of '+str(C_121_num_a_main)+'. '
else:
    String_C_121_num_a=''
min_114=min( kpar^(1/2), kpar^1 )
C_121_num_b_real=rounddown(RIF(pi*k114*min_114/2).lower(),2)
if C_121_num_b_main!=C_121_num_b_real:
    String_C_121_num_b='Error in C_121_num_b. It should be '+ \
    str(C_121_num_b_real)+ \
    ' instead of '+str(C_121_num_b_main)+'. '
else:
    String_C_121_num_b=''
###
#For sigma in (1/2,1), the bound in Thm. 1.1 has terms 1/T and 1/T^(2*sigma),
#the bound in Thm. 4.6 has 1/T and 1/T^(2*sigma):
#here we collect the coefficient of 1/T^(2*sigma), multiplied by 1/((sigma-1/2)*(1-sigma)).
max_N1=1-1/2 #Maximum of sigma-1/2 in (1/2,1).
max_N2=1-1/2 #Maximum of 1-sigma in (1/2,1).
max_N0=(3/4-1/2)*(1-3/4) #Maximum of (sigma-1/2)*(1-sigma) in (1/2,1).
max_sqrtzeta=sqrt(2/3-1/2)*(1-2/3) #Maximum of sqrt(sigma-1/2)*(1-sigma) in (1/2,1), after using Lemma 2.12.
C_121_opt_real=roundup(RIF(2*(N_u[1]*max_N1+1/2*N_u[2]*max_N2+N_u[0]*max_N0)+ \
               (Dz+4)*(1/sqrt(2)*max_sqrtzeta+sqrt(H)*max_N0)).upper(),2)
if C_121_opt_main!=C_121_opt_real:
    String_C_121_opt='Error in C_121_opt. It should be '+ \
    str(C_121_opt_real)+ \
    ' instead of '+str(C_121_opt_main)+'. '
else:
    String_C_121_opt=''
###
#For sigma=1/2, the bound in Thm. 1.1 has terms log(T)/T and 1/T,
#the bound in Thm. 3.1 has log(T)/T, 1/T and 1/T^2:
#here we collect the coefficients of 1/T and 1/T^2.
C_12_num_real=roundup(RIF(pi*(c21x+c22x/TH)).upper(),2)
if C_12_num_main!=C_12_num_real:
    String_C_12_num='Error in C_12_num. It should be '+ \
    str(C_12_num_real)+ \
    ' instead of '+str(C_12_num_main)+'. '
else:
    String_C_12_num=''
###
#For sigma=1/2, the bound in Thm. 1.1 has terms log(T)/T and sqrt(log(T))/T,
#the bound in Thm. 4.6 has log(T)/T, sqrt(log(T))/T and 1/T:
#here we collect the coefficients of sqrt(log(T))/T and 1/T.
C_12_opt_real=roundup(RIF(C_12_a_main+C_12_b_main/sqrt(log(T0_12))).upper(),2)
if C_12_opt_main!=C_12_opt_real:
    String_C_12_opt='Error in C_12_opt. It should be '+ \
    str(C_12_opt_real)+ \
    ' instead of '+str(C_12_opt_main)+'. '
else:
    String_C_12_opt=''
###
#For sigma in (0,1/2), the bound in Thm. 1.1 has terms 1/T^(2*sigma) and 1/T,
#the bound in Thm. 3.1 has 1/T^(2*sigma), 1/T and 1/T^(2*sigma+1):
#here we collect first the coefficient of 1/T^(2*sigma) in C_012_num_a,b,c,
#multiplied by 1/sigma, 1/(1/2-sigma) and 1 respectively;
#then we collect the coefficients of 1/T and 1/T^(2*sigma+1).
#We can use kpar^sigma/sigma=1/sigma+(kpar^sigma-1)/sigma and then maximize
#the second term in sigma=1/2, to get a better coefficient for sigma going to 0.
max_311=kpar^0
max_311bis=(kpar^(1/2)-1)/(1/2)
C_012_num_a_real=roundup(RIF(pi*k111*max_311).upper(),2)
if C_012_num_a_main!=C_012_num_a_real:
    String_C_012_num_a='Error in C_012_num_a. It should be '+ \
    str(C_012_num_a_real)+ \
    ' instead of '+str(C_012_num_a_main)+'. '
else:
    String_C_012_num_a=''
max_314=kpar^(1/2)
C_012_num_b_real=roundup(RIF(pi*k314*max_314/2).upper(),2)
if C_012_num_b_main!=C_012_num_b_real:
    String_C_012_num_b='Error in C_012_num_b. It should be '+ \
    str(C_012_num_b_real)+ \
    ' instead of '+str(C_012_num_b_main)+'. '
else:
    String_C_012_num_b=''
max_312=max( kpar^0/(2*0+1), kpar^(1/2)/(2*1/2+1) )
max_313=max( kpar^0/(0+1), kpar^(1/2)/(1/2+1) )
C_012_num_c_real=roundup(RIF(pi*(k111*max_311bis+k112*max_312+k113*max_313)).upper(),2)
if C_012_num_c_main!=C_012_num_c_real:
    String_C_012_num_c='Error in C_012_num_c. It should be '+ \
    str(C_012_num_c_real)+ \
    ' instead of '+str(C_012_num_c_main)+'. '
else:
    String_C_012_num_c=''
max_32x=max( (kpar/TH^2)^0, (kpar/TH^2)^(1/2) )/abs(zeta(2*0))
C_012_num_d_real=roundup(RIF(pi*max( 0, -c30x+k12x*max_32x )).upper(),2)
if C_012_num_d_main!=C_012_num_d_real:
    String_C_012_num_d='Error in C_012_num_d. It should be '+ \
    str(C_012_num_d_real)+ \
    ' instead of '+str(C_012_num_d_main)+'. '
else:
    String_C_012_num_d=''
###
#For sigma in (0,1/2), the bound in Thm. 1.1 has terms 1/T^(2*sigma) and 1/T,
#the bound in Thm. 4.6 has 1/T^(2*sigma) and 1/T:
#here we collect the coefficient of 1/T, multiplied by 1/(sigma^2*(1/2-sigma)).
si_L1=(log(2*pi)-2+sqrt((log(2*pi))^2+4))/(4*log(2*pi))
max_L1=si_L1*(1/2-si_L1)/(2*pi)^(1-2*si_L1) #Maximum of sigma*(1/2-sigma)/(2*pi)^(1-2*sigma) in (0,1/2).
si_L11=1/(2*log(2*pi))
max_L11=(1/2-si_L1)/(2*pi)^(1-2*si_L1) #Maximum of (1/2-sigma)/(2*pi)^(1-2*sigma) in (0,1/2).
max_L12=(1/2)^2/(2*pi)^(1-2*1/2) #Maximum of sigma^2/(2*pi)^(1-2*sigma) in (0,1/2).
si_L0=(log(2*pi)-3+sqrt((log(2*pi))^2+2*log(2*pi)+9))/(4*log(2*pi))
max_L0=si_L0^2*(1/2-si_L0)/(2*pi)^(1-2*si_L0) #Maximum of sigma^2*(1/2-sigma)/(2*pi)^(1-2*sigma) in (0,1/2).
C_012_opt_real=roundup(RIF(2*(L_1_u_main*max_L1+L_11_u_main*max_L11+1/2*L_12_u_main*max_L12+L_0_u_main*max_L0)).upper(),2)
if C_012_opt_main!=C_012_opt_real:
    String_C_012_opt='Error in C_012_opt. It should be '+ \
    str(C_012_opt_real)+ \
    ' instead of '+str(C_012_opt_main)+'. '
else:
    String_C_012_opt=''
###
#For sigma=1, the bound in Thm. 1.2 has terms T and sqrt(T),
#the bound in Thm. 4.3 has T, sqrt(T) and log(T):
#here we collect the coefficient of sqrt(T) (see the code in section 4.2 for explanations).
Cz_1_real=roundup(Cz_fix_coeff(Cz_1_string(f11_1_u,f12_1_u,f13_1_u,rho_1_m12_0,0),8,1,1,0).upper(),2)
if Cz_1_main!=Cz_1_real:
    String_Cz_1='Error in Cz_1. It should be '+ \
    str(Cz_1_real)+ \
    ' instead of '+str(Cz_1_main)+'. '
else:
    String_Cz_1=''
###
#For sigma in (1/2,1), the bound in Thm. 1.2 has terms T and max(T^(2-2*sigma)*log(T),sqrt(T)),
#the bound in Thm. 4.3 has T and max(T^(2-2*sigma)*log(T),sqrt(T)):
#here we collect the coefficient of max(T^(2-2*sigma)*log(T),sqrt(T)), multiplied by 1/((sigma-1/2)*(1-sigma)^2).
max_C1=(3/4-1/2)*(1-3/4) #Maximum of (sigma-1/2)*(1-sigma) in (1/2,1).
max_C11=1-1/2 #Maximum of sigma-1/2 in (1/2,1).
max_C2=(1-1/2)^2 #Maximum of (1-sigma)^2 in (1/2,1).
max_C0=(2/3-1/2)*(1-2/3)^2 #Maximum of (sigma-1/2)*(1-sigma)^2 in (1/2,1).
Cz_121_real=roundup(RIF(Cz_121_u[2]*max_C11+Cz_121_u[1]*max_C1+1/2*Cz_121_u[3]*max_C2+Cz_121_u[0]*max_C0).upper(),2)
if Cz_121_main!=Cz_121_real:
    String_Cz_121='Error in Cz_121. It should be '+ \
    str(Cz_121_real)+ \
    ' instead of '+str(Cz_121_main)+'. '
else:
    String_Cz_121=''
###
#For sigma=1/2, the bound in Thm. 1.2 has terms T*log(T), T*sqrt(log(T)) and T, like Thm. 4.3:
#here we simplify the coefficients of T*sqrt(log(T)) and T.
Cz_12_a_real=roundup(Cz_12_ua.upper(),2)
if Cz_12_a_main!=Cz_12_a_real:
    String_Cz_12_a='Error in Cz_12_a. It should be '+ \
    str(Cz_12_a_real)+ \
    ' instead of '+str(Cz_12_a_main)+'. '
else:
    String_Cz_12_a=''
Cz_12_b_real=roundup(Cz_12_ub.upper(),2)
if Cz_12_b_main!=Cz_12_b_real:
    String_Cz_12_b='Error in Cz_12_b. It should be '+ \
    str(Cz_12_b_real)+ \
    ' instead of '+str(Cz_12_b_main)+'. '
else:
    String_Cz_12_b=''
###
#For sigma in (0,1/2), the bound in Thm. 1.2 has terms T^(2-2*sigma) and T,
#the bound in Thm. 4.3 has T^(2-2*sigma) and T:
#here we collect the coefficient of T, multiplied by 1/(sigma^2*(1/2-sigma)).
#The calculations are identical to those for C_012_opt up to a factor of 2.
Cz_012_real=roundup(RIF(L_1_u_main*max_L1+L_11_u_main*max_L11+1/2*L_12_u_main*max_L12+L_0_u_main*max_L0).upper(),2)
if Cz_012_main!=Cz_012_real:
    String_Cz_012='Error in Cz_012. It should be '+ \
    str(Cz_012_real)+ \
    ' instead of '+str(Cz_012_main)+'. '
else:
    String_Cz_012=''
###
#For sigma=0, the bound in Thm. 1.2 has terms T^2 and T*log(T),
#the bound in Thm. 4.5 has T^2, T*log(T) and T:
#here we collect the coefficients of T*log(T) and T.
Cz_0_real=roundup(RIF(Cz_0_u_a_main+Cz_0_u_b_main/log(TZ)).upper(),2)
if Cz_0_main!=Cz_0_real:
    String_Cz_0='Error in Cz_0. It should be '+ \
    str(Cz_0_real)+ \
    ' instead of '+str(Cz_0_main)+'. '
else:
    String_Cz_0=''
###
#Here we write down the constants listed in Thm. 3.1.
#The code that computes them is in section 3.3.
if kpar_main!=roundup(kpar.upper(),5):
    String_kpar='Error in kpar. It should be '+ \
    str(roundup(kpar.upper(),5))+ \
    ' instead of '+str(kpar_main)+'. '
else:
    String_kpar=''
if k111_main!=roundup(k111.upper(),5):
    String_k111='Error in k111. It should be '+ \
    str(roundup(k111.upper(),5))+ \
    ' instead of '+str(k111_main)+'. '
else:
    String_k111=''
if k112_main!=roundup(k112.upper(),5):
    String_k112='Error in k112. It should be '+ \
    str(roundup(k112.upper(),5))+ \
    ' instead of '+str(k112_main)+'. '
else:
    String_k112=''
if k113_main!=roundup(k113.upper(),5):
    String_k113='Error in k113. It should be '+ \
    str(roundup(k113.upper(),5))+ \
    ' instead of '+str(k113_main)+'. '
else:
    String_k113=''
if k114_main!=rounddown(k114.lower(),5):
    String_k114='Error in k114. It should be '+ \
    str(rounddown(k114.lower(),5))+ \
    ' instead of '+str(k114_main)+'. '
else:
    String_k114=''
if k12x_main!=roundup(k12x.upper(),5):
    String_k12x='Error in k12x. It should be '+ \
    str(roundup(k12x.upper(),5))+ \
    ' instead of '+str(k12x_main)+'. '
else:
    String_k12x=''
if c21x_main!=roundup(c21x.upper(),5):
    String_c21x='Error in c21x. It should be '+ \
    str(roundup(c21x.upper(),5))+ \
    ' instead of '+str(c21x_main)+'. '
else:
    String_c21x=''
if c22x_main!=roundup(c22x.upper(),5):
    String_c22x='Error in c22x. It should be '+ \
    str(roundup(c22x.upper(),5))+ \
    ' instead of '+str(c22x_main)+'. '
else:
    String_c22x=''
if k314_main!=roundup(k314.upper(),5):
    String_k314='Error in k314. It should be '+ \
    str(roundup(k314.upper(),5))+ \
    ' instead of '+str(k314_main)+'. '
else:
    String_k314=''
if c30x_main!=rounddown(c30x.lower(),5):
    String_c30x='Error in c30x. It should be '+ \
    str(rounddown(c30x.lower(),5))+ \
    ' instead of '+str(c30x_main)+'. '
else:
    String_c30x=''
###
#Here we compute the value of the constants defining L^+,L^- as in Thm. 4.5:
#see the proof of the theorem, starting with (4.16).
#In brief, it will be (1/(2*pi)^(1-2*sigma))*S+(K2/(2*pi)^(1-2*sigma))*(whole bound in Thm. 4.3),
#where we use Lemma 2.12 for the main term of the bound in Thm. 4.3.
#Given the O* in (4.16), we must use the upper bound in Thm. 4.3 for both L^+ and L^-.
L_1_u_real=roundup(RIF(S_u___1+K2*Cz_121_u[1]).upper(),5)
if L_1_u_main!=L_1_u_real:
    String_L_1_u='Error in L_1_u. It should be '+ \
    str(L_1_u_real)+ \
    ' instead of '+str(L_1_u_main)+'. '
else:
    String_L_1_u=''
L_11_u_real=roundup(RIF(K2*Cz_121_u[2]).upper(),5)
if L_11_u_main!=L_11_u_real:
    String_L_11_u='Error in L_11_u. It should be '+ \
    str(L_11_u_real)+ \
    ' instead of '+str(L_11_u_main)+'. '
else:
    String_L_11_u=''
L_12_u_real=roundup(RIF(S_u___12+K2*(1+Cz_121_u[3])).upper(),5)
if L_12_u_main!=L_12_u_real:
    String_L_12_u='Error in L_12_u. It should be '+ \
    str(L_12_u_real)+ \
    ' instead of '+str(L_12_u_main)+'. '
else:
    String_L_12_u=''
L_0_u_real=roundup(RIF(S_u___0+K2*(H+Cz_121_u[0])).upper(),5)
if L_0_u_main!=L_0_u_real:
    String_L_0_u='Error in L_0_u. It should be '+ \
    str(L_0_u_real)+ \
    ' instead of '+str(L_0_u_main)+'. '
else:
    String_L_0_u=''
L_1_l_real=roundup(RIF(S_l___1+K2*Cz_121_u[1]).upper(),5)
if L_1_l_main!=L_1_l_real:
    String_L_1_l='Error in L_1_l. It should be '+ \
    str(L_1_l_real)+ \
    ' instead of '+str(L_1_l_main)+'. '
else:
    String_L_1_l=''
L_11_l_real=roundup(RIF(K2*Cz_121_u[2]).upper(),5)
if L_11_l_main!=L_11_l_real:
    String_L_11_l='Error in L_11_l. It should be '+ \
    str(L_11_l_real)+ \
    ' instead of '+str(L_11_l_main)+'. '
else:
    String_L_11_l=''
L_12_l_real=roundup(RIF(S_l___12+K2*(1+Cz_121_u[3])).upper(),5)
if L_12_l_main!=L_12_l_real:
    String_L_12_l='Error in L_12_l. It should be '+ \
    str(L_12_l_real)+ \
    ' instead of '+str(L_12_l_main)+'. '
else:
    String_L_12_l=''
L_0_l_real=roundup(RIF(S_l___0+K2*(H+Cz_121_u[0])).upper(),5)
if L_0_l_main!=L_0_l_real:
    String_L_0_l='Error in L_0_l. It should be '+ \
    str(L_0_l_real)+ \
    ' instead of '+str(L_0_l_main)+'. '
else:
    String_L_0_l=''
###
#Here we compute the value of the constants for sigma=0 in Thm. 4.5:
#see the proof of the theorem, starting with (4.16).
#In brief, the coefficient of T*log(T) will be (1/(2*pi))*W,
#while the coefficient of T will be (1/(2*pi))*2*pi*sqrt(2/3)+(K2/(2*pi))*(whole bound in Thm. 4.3).
#Given the O* in (4.16), we must use the upper bound in Thm. 4.3 for both bounds here.
Cz_0_u_a_real=roundup(RIF(Cz_1gen_u/(2*pi)).upper(),5)
if Cz_0_u_a_main!=Cz_0_u_a_real:
    String_Cz_0_u_a='Error in Cz_0_u_a. It should be '+ \
    str(Cz_0_u_a_real)+ \
    ' instead of '+str(Cz_0_u_a_main)+'. '
else:
    String_Cz_0_u_a=''
Cz_0_u_b_real=roundup(RIF(sqrt(2/3)+K2/(2*pi)*(pi^2/6+pi*sqrt(2/3)/sqrt(TZ)+Cz_1_u*log(TZ)/TZ)).upper(),5)
if Cz_0_u_b_main!=Cz_0_u_b_real:
    String_Cz_0_u_b='Error in Cz_0_u_b. It should be '+ \
    str(Cz_0_u_b_real)+ \
    ' instead of '+str(Cz_0_u_b_main)+'. '
else:
    String_Cz_0_u_b=''
Cz_0_l_a_real=roundup(RIF(Cz_1gen_l/(2*pi)).upper(),5)
if Cz_0_l_a_main!=Cz_0_l_a_real:
    String_Cz_0_l_a='Error in Cz_0_l_a. It should be '+ \
    str(Cz_0_l_a_real)+ \
    ' instead of '+str(Cz_0_l_a_main)+'. '
else:
    String_Cz_0_l_a=''
Cz_0_l_b_real=roundup(RIF(sqrt(2/3)+K2/(2*pi)*(pi^2/6+pi*sqrt(2/3)/sqrt(TZ)+Cz_1_u*log(TZ)/TZ)).upper(),5)
if Cz_0_l_b_main!=Cz_0_l_b_real:
    String_Cz_0_l_b='Error in Cz_0_l_b. It should be '+ \
    str(Cz_0_l_b_real)+ \
    ' instead of '+str(Cz_0_l_b_main)+'. '
else:
    String_Cz_0_l_b=''
###
#Here we compute the value of the constants for sigma=1/2 in Thm. 4.6:
#see the proof of the theorem.
#In brief, the integral of (log(x)+1)/x^2 is -log(x)/x-2/x
#and the integral of 2/x^2 is -2/x;
#for the integral of 2*sqrt(log(x))/x^2, we use the following simplification:
#2*sqrt(log(x))/x^2<2*(sqrt(log(x))/x^2-1/(2*sqrt(log(x))*x^2)+1/(2*sqrt(log(T0))*x^2)
#for any x>T0, and the integral of the RHS is 2*(-sqrt(log(x))/x-1/(2*sqrt(log(T0))*x).
C_12_a_real=roundup(RIF(2*Cz_12_ua).upper(),5)
if C_12_a_main!=C_12_a_real:
    String_C_12_a='Error in C_12_a. It should be '+ \
    str(C_12_a_real)+ \
    ' instead of '+str(C_12_a_main)+'. '
else:
    String_C_12_a=''
C_12_b_real=roundup(RIF(2+2*Cz_12_ub+1/sqrt(log(TZ))*Cz_12_ua).upper(),5)
if C_12_b_main!=C_12_b_real:
    String_C_12_b='Error in C_12_b. It should be '+ \
    str(C_12_b_real)+ \
    ' instead of '+str(C_12_b_main)+'. '
else:
    String_C_12_b=''
###
#Here we compute the value of the constants for sigma=1 in Thm. 4.6:
#see the proof of the theorem.
#In brief, the integral of zeta(2)/x^2 is -zeta(2)/x,
#the integral of 2/x^(5/2) is -4/3*1/x^(3/2),
#and the integral of 2*log(x)/x^3 is -log(x)/x^2-1/2*1/x^2.
C_1_a_real=roundup(Cz_1gen_u.upper(),5)
if C_1_a_main!=C_1_a_real:
    String_C_1_a='Error in C_1_a. It should be '+ \
    str(C_1_a_real)+ \
    ' instead of '+str(C_1_a_main)+'. '
else:
    String_C_1_a=''
C_1_b_real=roundup(RIF(pi*sqrt(2/3)*4/3+Cz_1gen_u/2).upper(),5)
if C_1_b_main!=C_1_b_real:
    String_C_1_b='Error in C_1_b. It should be '+ \
    str(C_1_b_real)+ \
    ' instead of '+str(C_1_b_main)+'. '
else:
    String_C_1_b=''
###
#Here we compute the threshold between the bounds in Thm. 3.1 and Thm. 4.6 for sigma=1/2:
#see section 5 for details.
T0_12_tooexact=find_threshold(0,1/2,100,10^40)
if T0_12_exact<=T0_12_tooexact or T0_12_exact-10^(Exp_12-6)>=T0_12_tooexact:
    String_T0_12='Error in T0_12. It should be '+ \
    str(T0_12_tooexact)+ \
    ' instead of '+str(T0_12_exact)+'. '
else:
    String_T0_12=''
###
#Here we ensure that the "<200" entries in Table 1 are correct:
#see section 5 for details.
Tab1=find_threshold(1,56/100,100,10^8)
Tab2=find_threshold(1,57/100,10,10^8)
Tab3=find_threshold(1,58/100,4,10^8)
if Tab1!='<200' or Tab2!='<200' or Tab3!='<200':
    String_Tab='Error in Table 1, in some of the <200 entries. '
else:
    String_Tab=''
###
\end{mysage}
\normalsize

$\sage{String_C_1}\sage{String_C_121_num_a}\sage{String_C_121_num_b}\sage{String_C_121_opt}\sage{String_C_12_num}\sage{String_C_12_opt}\sage{String_C_012_opt}\sage{String_C_012_num_a}\sage{String_C_012_num_b}\sage{String_C_012_num_c}\sage{String_C_012_num_d}\sage{String_Cz_1}\sage{String_Cz_121}\sage{String_Cz_12_a}\sage{String_Cz_12_b}\sage{String_Cz_012}\sage{String_Cz_0}\sage{String_kpar}\sage{String_k111}\sage{String_k112}\sage{String_k113}\sage{String_k114}\sage{String_k12x}\sage{String_c21x}\sage{String_c22x}\sage{String_k314}\sage{String_c30x}\sage{String_L_1_u}\sage{String_L_11_u}\sage{String_L_12_u}\sage{String_L_0_u}\sage{String_L_1_l}\sage{String_L_11_l}\sage{String_L_12_l}\sage{String_L_0_l}\sage{String_Cz_0_u_a}\sage{String_Cz_0_u_b}\sage{String_Cz_0_l_a}\sage{String_Cz_0_l_b}\sage{String_C_12_a}\sage{String_C_12_b}\sage{String_C_1_a}\sage{String_C_1_b}\sage{String_T0_12}\sage{String_Tab}$

\end{document}